\numberwithin{equation}{section}
\newcommand*{\N}{\mathbb{N}}
\newcommand*{\Z}{\mathbb{Z}}
\newcommand*{\Q}{\mathbb{Q}}
\newcommand*{\R}{\mathbb{R}}
\DeclareMathOperator{\bs}{\backslash}
\DeclareMathOperator{\Krav}{Kr}
\DeclareMathOperator{\Cr}{Cr}
\DeclareMathOperator{\divi}{div}
\declaretheorem[
	name=Theorem,
	numberwithin=section
	]{thm}
\declaretheorem[
	name=Lemma,
	sibling=thm,
	]{lem}
\declaretheorem[
	name=Proposition,
	sibling=thm,
	]{prop}
\declaretheorem[
	name=Corollary,
	sibling=thm,
	]{cor}
 \declaretheorem[
	name=Conjecture,
	sibling=thm,
	]{conj}
\declaretheorem[
	name=Definition,
	style=definition,
	numbered=no,
	]{defin}
\declaretheorem[
	name=Notation,
	style=definition,
	numbered=no,
	]{nota}
\declaretheorem[
	name=Remark,
	style=remark,
	numbered=no
	]{rem}	
\declaretheorem[
	name=Example,
	style=remark,
	numbered=no
	]{exam}
\title{The 1/4-phenomenon of placement probabilities of tilings in the Aztec diamond}
\author{Marcus Schönfelder}
\thanks{{marcus.schoenfelder@univie.ac.at}. The author was supported by the Austrian Science Foundation FWF, grant 10.55776/F1002, in the framework of the Special Research Programm ``Discrete Random Structures: Enumeration and Scaling Limits".}
\date{May 2026}
\begin{document}

\begin{abstract}
    We consider domino tilings of the Aztec diamond. Using the \textit{Domino Shuffling} algorithm introduced by Elkies, Kuperberg, Larsen, and Propp (1992), we are able to generate domino tilings uniformly at random. In this paper, we investigate the probability of finding a domino at a specific position in such a random tiling. We prove that this placement probability is always equal to 
$1/4$ plus a rational function, whose shape depends on the location of the domino, multiplied by a position-independent factor that involves only the size of the diamond. This result leads to significantly more compact explicit counting formulas compared to previous findings. As a direct application, we derive explicit counting formulas for the domino tilings of Aztec diamonds with 
$2\times 2$-square holes at arbitrary positions.   
\noindent  \end{abstract}
\maketitle

\section{Introduction}
\subsection{Background and motivation}
%Tiling enumeration is a discipline in combinatorics that concerns itself with the counting of the possibilities to cover regions (usually in the plane) with tiles of a certain shape such that the tiles do not overlap and still fit into our region. The most prominent tiling models are domino tilings. Consider a planar region obtained by a union of unit squares whose boundaries align with the square lattice. We call this a domino domain. For example, think of an $m\times n$-rectangle. A domino tile is the union of two unit squares and therefore a $1\times2$- or $2\times 1$-rectangle. The question is now: in how many different ways are we able to cover our domino domain with domino tiles? An example for a domino domain and one of its possible domino tilings is provided in Figure \ref{Tilingexam}.

The dimer model has risen to a central object in statistical physics. From the point of view of mathematics, it concerns itself with the probabilistic behaviour of random perfect matchings on graphs. A perfect matching of a simple planar graph $G=(V,E)$ is a subset of edges $\mu\subseteq E$ such that any vertex $v\in V$ is contained in exactly one edge of $\mu$ (and therefore matched with the other vertex in this edge). 
If the graphs are subgraphs of the square- or hexagonal lattice, perfect matchings, in the context of statistical physics also called \textit{dimer configurations}, are in bijection with tilings of regions related to the graph by duality. The most prominent examples are rhombus- and domino tilings. For the second, one wants to cover a region consisting of a union of unit squares with domino shaped tiles, i.e., $1\times 2$ rectangles. An example of this idea is given in Figure~\ref{domi1}. The connection with perfect matchings is visualised in Figure~\ref{domi2}.

\begin{figure}
\centering
\includegraphics[width=0.4\textwidth]{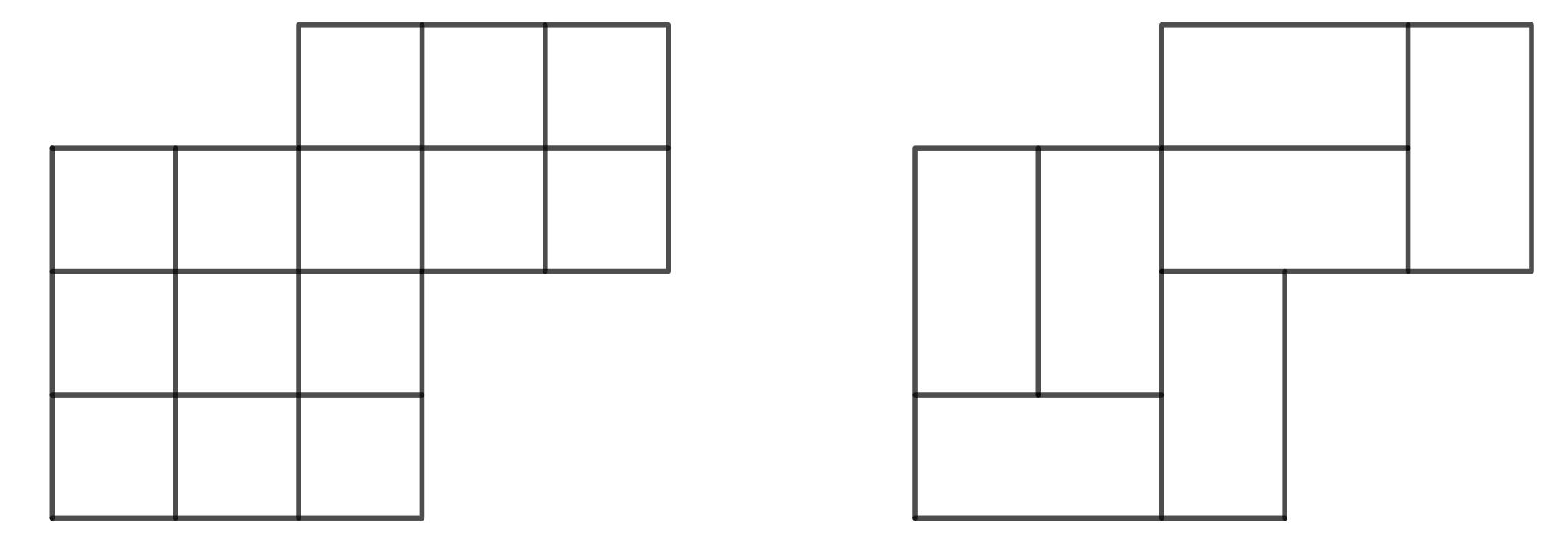}
\caption{A domino region left and one of its possible domino tilings on the right.}
\label{domi1}
\end{figure}

\begin{figure}
\centering
\includegraphics[width=0.2\textwidth]{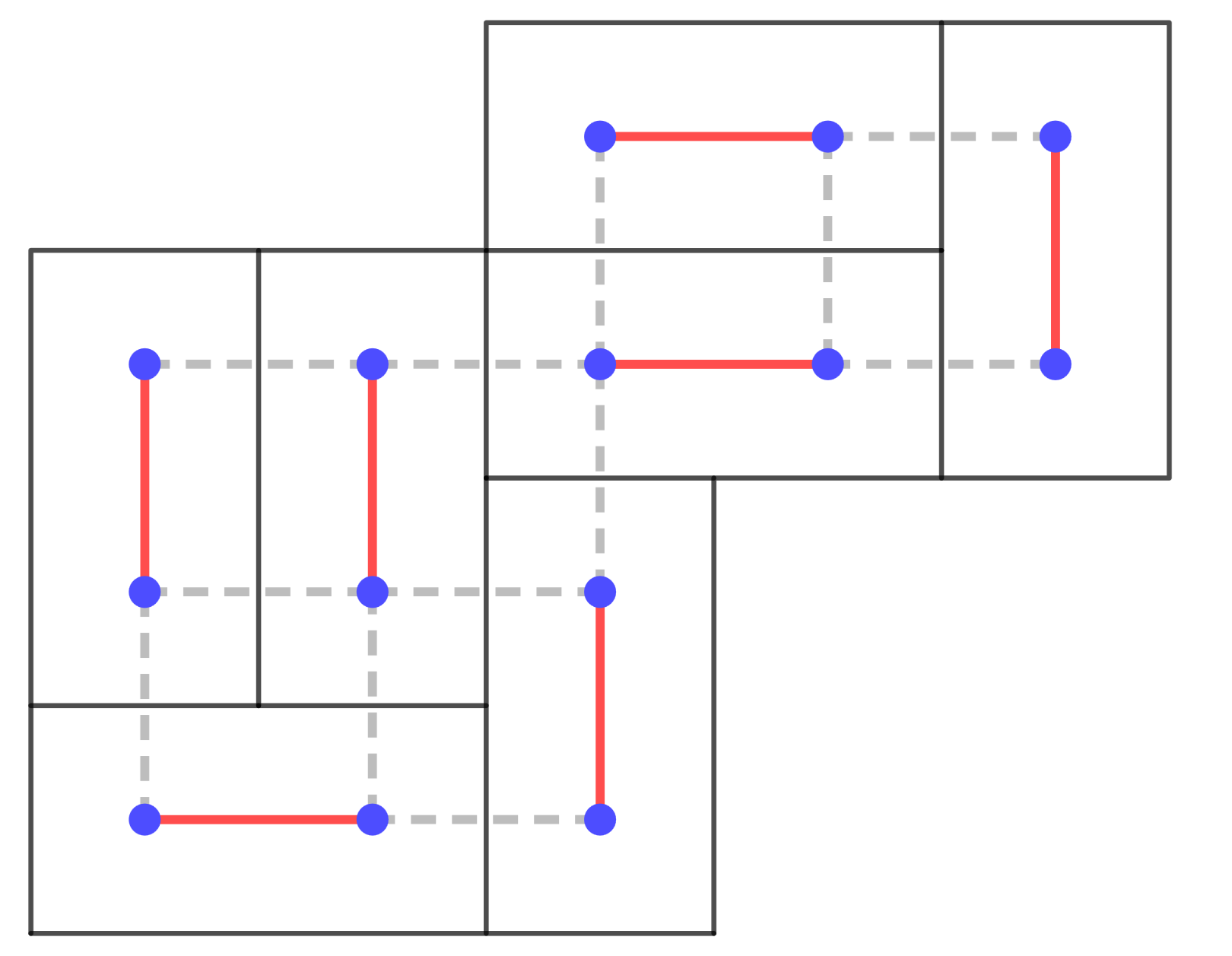}
\caption{The bijection between domino tilings and perfect matchings of the dual graph.}
\label{domi2}
\end{figure}

Advances in this theory are achieved hand in hand with the development of new enumerative techniques. First breakthroughs were accomplished by Temperley and Fisher~\cite{TemperleyFisher} and Kasteleyn~\cite{KSquare}, who simultaneously derived the enumeration formula counting domino tilings of the $n\times m$-rectangle. In the case of rhombus tilings it is MacMahon's Box Formula~\cite{MacMahon} which counts tilings of semiregular hexagons and connects them with plane partitions.
Later on, Kasteleyn~\cite{Kasteleyn} developed his method to express the number of perfect matchings of any planar graph as a Pfaffian or determinant, making the dimer model a \textit{solvable} statistical model. Meanwhile, people started to study the correlation functions on this probabilistic model. An early example would be the work of Fisher and Stephenson~\cite{FisherStephenson}. Later, Kenyon~\cite{Kenyon} combined the computation of correlations with Kasteleyn's theory. In their simplest form, so-called one-point correlation functions ask for the \textit{placement probability} telling the chance of observing a certain tile at a particular position in a random tiling. Equivalently, they encode for an edge the probability that it is contained in a random perfect matching. 

For the semiregular hexagon, closed formulas for the placement probabilities were developed by Fischer~\cite{FischerHexaLoch}, Gilmore~\cite{gilmoreInv} and in even more generality Petrov~\cite{Petrov}. However, the complexity of these formulas makes them very cumbersome for many applications. A conjecture by Krattenthaler~\cite{1/3}, which is related to our main result, promises vast possible simplifications of the structure of those formulas.

We study placement probabilities in the Aztec diamond. Here, explicit formulas were found by Helfgott~\cite{helfgott} and Cohn, Elkies and Propp~\cite{localstats}. The probabilities were expressed in terms of products of Kravchuk polynomials, whose complexity leads to similar problems as in the case of the semiregular hexagon. In this article we prove a strongly simplifying structural result on the shape of the formulas for those placement probabilities. 

\subsection{Set-up and the main result}
The Aztec diamond of size $n\in \mathbb{N}$ is a region in the plane consisting of unit squares aligned as depicted in Figure~\ref{Aztecdiamond}. Throughout this article we assume our diamonds to be coloured black and white in a chessboard pattern such that the left one of the top-most unit squares is black. It was shown by Elkies, Kuperberg, Larsen and Propp~\cite{Azdiam1,Azdiam2} that the number of domino tilings of the Aztec diamond of size $n$ is equal to
\[2^{(n+1)n/2}.\]

\begin{figure}
\centering
\includegraphics[width=0.6\textwidth]{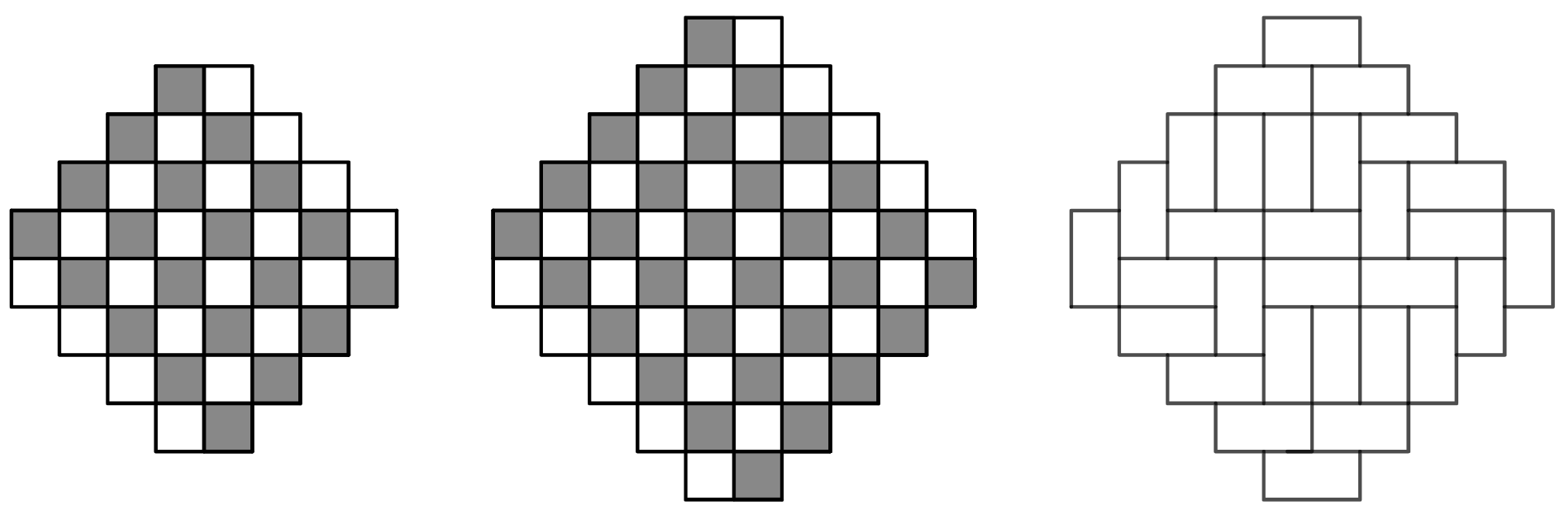}
\caption{The Aztec diamonds of sizes 4 (left) and 5 (centre) as well as a domino tiling of the Aztec diamond (right).}
\label{Aztecdiamond}
\end{figure}

Moreover, also in~\cite{Azdiam2} the researchers developed an algorithm called \textit{Domino Shuffling} which enables us to sample a domino tiling of the Aztec diamond uniformly at random. The implications of this algorithm make it possible to study single placement probabilities. First of all, introduce a coordinate system to our Aztec diamond by assuming that its centre is located at $(0,0)$ and all squares have side length one. We consider the quantity $\mathbb{P}(l,m;n)$ defined to be the probability that in a random tiling of the Aztec diamond of dimension $n$ we observe a horizontal domino tile whose left square is coloured black and whose centre of its lower side is located at $(l,m)$. The idea is visualised in Figure~\ref{para}.

\begin{figure}
\centering
\includegraphics[width=0.3\textwidth]{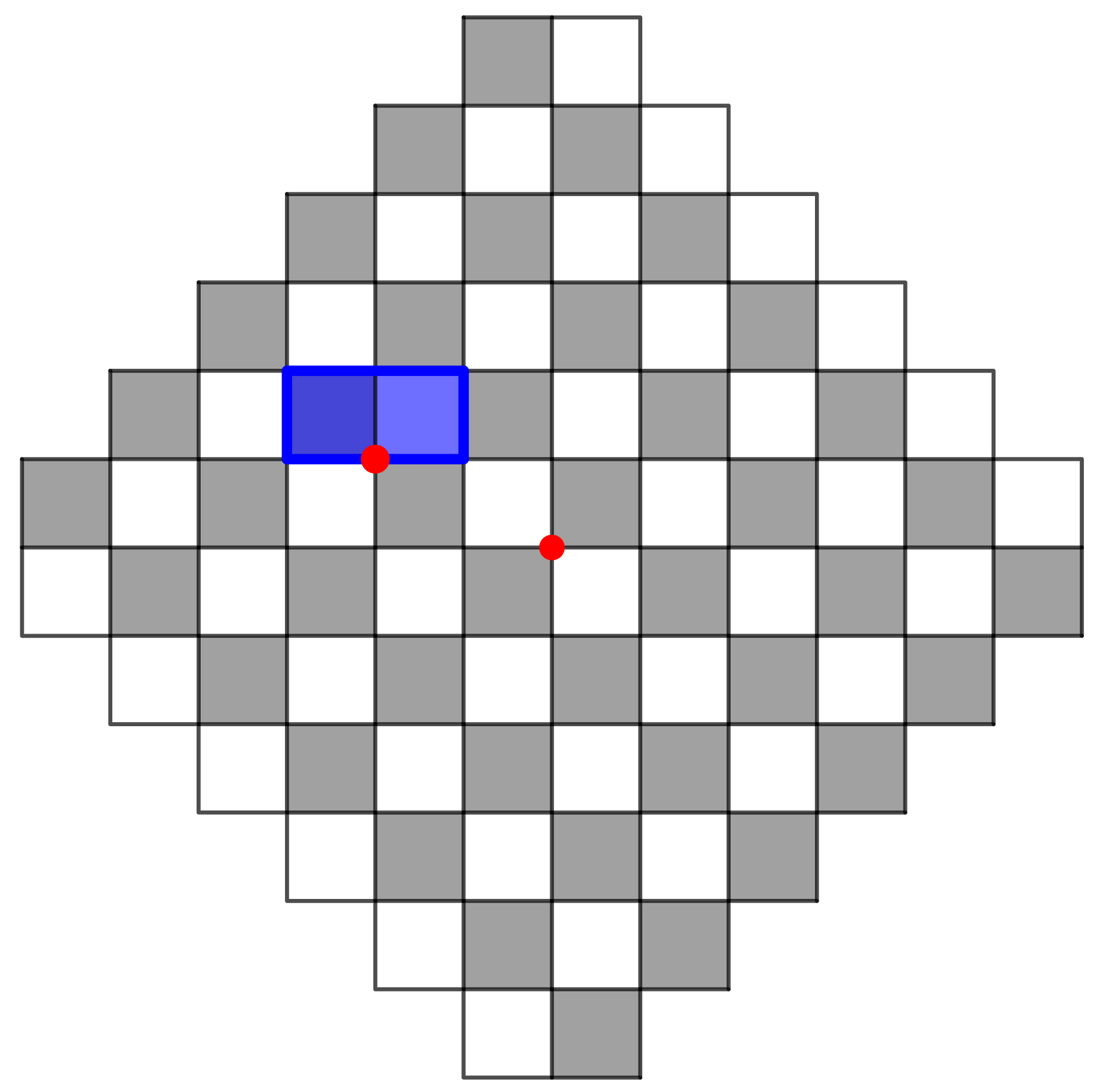}
\caption{The Aztec diamonds of sizes 6. In blue we marked the space for a horizontal domino tile at position $(-2,1)$. The probability, to observe a horizontal domino at this location in a random tiling is given by $\mathbb{P}(-2,1;6)$.}
\label{para}
\end{figure}

The reason why we have to distinguish between horizontal tiles whose left square is black or whose left square is white originates from the Domino Shuffling algorithm, where those two cases lead to different actions in the procedure. Naturally, once we know $\mathbb{P}(l,m;n)$ for all $l,m\in \mathbb{Z}$ and $n\in \mathbb{N}$ we know all possible placement probabilities by just rotating the diamond. Clearly, $\mathbb{P}(l,m;n)=0$ if $l+m\equiv n \mod 2$, since then the left square at position $(l,m)$ is simply white according to the rules of our colouring.

In~\cite{localstats}, Cohn, Elkies and Propp worked out the asymptotic behaviour of $\mathbb{P}(x,y;n)$ for $n\to \infty$ where $(x,y)=(l/n,m/n)$ encodes the relative position inside a \textit{normalised} Aztec diamond. We on the other hand are interested in the exact value of those probabilities for a fixed position in the diamond. We are able to prove the following statement about the shape of an explicit formula.

\begin{thm}[\sc{The 1/4-phenomenon}]\label{1/4}
    Consider the Aztec diamond of size $n\in \mathbb{N}$ with $\alpha\in\{0,1,2,3\}$ and $p\in \mathbb{N}$ such that $n=4p+\alpha$. Dye the Aztec diamond in a chessboard pattern black and white in a way such that the left of the topmost vertices is coloured black. Then, after sampling a tiling uniformly at random, the probability to find a horizontal domino tile at position $(l,m)$ whose left unit square is black equals 
    \[\mathbb{P}(l,m;4p+\alpha)=\begin{cases}
        \frac{1}{4}+2^{-4p-\alpha}\binom{2p-1}{p}^2f_{l,m,\alpha}(p), &\text{ if }\ l+m\equiv \alpha +1 \mod 2,\\
        0, &\text{ otherwise,}
    \end{cases} \]
    where $f_{l,m,\alpha}(p)$ is a rational function in $p$.
\end{thm}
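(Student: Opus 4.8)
The plan is to start from the known exact formula for the placement probabilities in terms of Kravchuk polynomials, as established by Cohn, Elkies and Propp. Their result expresses $\mathbb{P}(l,m;n)$ as a sum (or product) of squares of Kravchuk polynomials evaluated at the position $(l,m)$, weighted by binomial factors and a global power of $2$. My first step would therefore be to recall this formula explicitly and to normalise it, separating the position-dependent part from the size-dependent prefactor. The appearance of $\binom{2p-1}{p}^2$ and the power $2^{-4p-\alpha}$ in the target statement strongly suggests that one should isolate the central Kravchuk value (the one sitting closest to the middle of the diamond), since central binomial coefficients of the form $\binom{2p-1}{p}$ arise precisely from evaluating Kravchuk polynomials at their symmetric midpoint.

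Next I would carry out the algebraic reduction in two stages. First, I would substitute $n=4p+\alpha$ and split the analysis into the four residue classes $\alpha\in\{0,1,2,3\}$, since the parity condition $l+m\equiv\alpha+1\bmod 2$ and the structure of the Kravchuk recurrences differ according to $\alpha$. Second, within each case, I would expand the Kravchuk polynomials using their explicit hypergeometric / binomial-sum representation and collect terms. The key structural claim is that the leading contribution — the ``constant'' part independent of the fine position — collapses to exactly $1/4$, while all remaining terms, after factoring out $2^{-4p-\alpha}\binom{2p-1}{p}^2$, assemble into a \emph{rational} function of $p$. To make the rationality transparent, I would use the standard identity $\binom{2p-1}{p}=\frac{1}{2}\binom{2p}{p}$ together with ratio formulas such as $\binom{2p+k}{p}/\binom{2p}{p}$ being a rational function of $p$ for fixed integer $k$; this is what converts the binomial quotients appearing after the expansion into honest rational functions.

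The heart of the argument, and the step I expect to be the main obstacle, is proving that the position-independent piece equals precisely $1/4$ and that \emph{everything else} factors cleanly through the single prefactor $2^{-4p-\alpha}\binom{2p-1}{p}^2$. A priori the Cohn--Elkies--Propp formula is a sum of several Kravchuk products, each carrying its own binomial weight, and it is not obvious that after combining them the non-constant remainder shares one common factor of $\binom{2p-1}{p}^2$ rather than a more complicated product of central binomials. I would attack this by showing that each Kravchuk polynomial value at position $(l,m)$ can be written as $\binom{2p-1}{p}$ times a rational function of $p$ (for fixed $l,m,\alpha$), so that the product of two such values carries exactly the square $\binom{2p-1}{p}^2$. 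This amounts to controlling how the central binomial factor propagates through the Kravchuk recurrence when one moves a bounded number of steps away from the centre; the displacement is bounded because $(l,m)$ is held fixed while $p\to\infty$ only formally, so the relevant index shifts are $O(1)$.

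Finally, I would verify the $1/4$ constant by an independent limiting or symmetry argument: letting the position tend to the centre of the diamond, the Cohn--Elkies--Propp asymptotics (their arctic-circle local statistics) force the placement probability towards $1/4$ in the liquid region, and the subleading rational correction is exactly what the factored remainder captures. Assembling the four cases $\alpha\in\{0,1,2,3\}$ then yields the stated piecewise formula, with $f_{l,m,\alpha}(p)$ defined as the rational function obtained after extracting the common prefactor. The cleanest presentation would define $f_{l,m,\alpha}(p)$ directly as
\[
f_{l,m,\alpha}(p)=2^{4p+\alpha}\binom{2p-1}{p}^{-2}\Bigl(\mathbb{P}(l,m;4p+\alpha)-\tfrac14\Bigr),
\]
and then prove that this expression, a priori a ratio involving Kravchuk values, is in fact a rational function of $p$ for each fixed $(l,m,\alpha)$.
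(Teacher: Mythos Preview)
Your proposal contains the right intermediate lemma but misidentifies what the Cohn--Elkies--Propp formula actually gives you, and this is where the argument breaks. The closed Kravchuk expression in \cite{localstats} (Lemma~\ref{Creaform} here) is for the \emph{creation rate} $\Cr(l,m;n)$, which is a single product of two Kravchuk values. The placement probability $\mathbb{P}(l,m;n)$ itself is only obtained from this by telescoping: $\mathbb{P}(l,m;n)=\sum_{j}\tfrac12\Cr(l,m-j;n-j)$, a sum whose number of terms grows linearly with $n=4p+\alpha$. Your step ``each Kravchuk value is $\binom{2p-1}{p}$ times a rational function of $p$'' is correct and is precisely the paper's Growth Theorem~\ref{growing1}; it immediately gives the desired factorisation for $\Cr$. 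But applying it term-by-term to the telescoping sum for $\mathbb{P}$ produces a sum of $O(p)$ rational functions of $p$, with the index of summation itself running up to $p$---and such a sum is \emph{not} in general a rational function of $p$. This is the missing idea: nothing in your outline explains why the $p$-dependent sum collapses. Relatedly, your plan to pin down the constant $1/4$ via the arctic-circle asymptotics only shows the limit is $1/4$; it does not show that the remainder $\mathbb{P}-\tfrac14$ factors \emph{exactly} through $2^{-4p-\alpha}\binom{2p-1}{p}^2$.

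The paper avoids ever summing a $p$-dependent number of Kravchuk terms. Instead it uses the creation-rate factorisation only to move a \emph{bounded} number of steps: from $(l,m)$ to $(l,0)$ costs $|m|$ creation rates, independent of $p$, so rationality survives (Proposition~\ref{horizon}). Then the exact constant $1/4$ is extracted not from asymptotics but from the probabilistic identity that the four orientation probabilities at a square sum to $1$, combined with the diamond's symmetries; this reduces everything to the single value $\mathbb{P}(0,0;4p+\alpha)$. Finally, that base case is computed \emph{exactly}---$\mathbb{P}(0,0;4p+3)=\tfrac14$ and $\mathbb{P}(0,0;4p+1)=\tfrac14+2^{-4p}\binom{2p-1}{p}^2$---via Kuo condensation together with Ciucu's count of holey Aztec diamonds. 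Your Kravchuk-factorisation insight is a genuine and necessary ingredient, but to turn it into a proof you need the bounded-step reduction and an independent exact evaluation at the origin.
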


\begin{rem}
    By the recursive arguments of the proof of Theorem~\ref{1/4} it is also possible to construct the rational functions $f_{l,m,\alpha}(p)$ explicitly for a given location $(l,m)\in \mathbb{Z}^2$. However, depending on the distance of $(l,m)$ to the origin this procedure may involve many computational steps. Thus, it may easier to find $f_{l,m,\alpha}(p)$ via polynomial interpolation. We state necessary bounds on the degrees in Theorem~\ref{shortadv} and in more detail also in Theorem~\ref{adv1/4}.
\end{rem}

The idea behind the formula in Theorem~\ref{1/4} is to expect an almost symmetric behaviour of the single domino placements. Given a unit square in the Aztec diamond it has to be covered by a domino tile whose second part is directed either north, east, south or west. Hence, there are exactly four possibilities to place a domino tile on said unit square. Now, Theorem~\ref{1/4} tells us that the probability to find a tile in a certain position is in fact $1/4$ plus \textit{something nice}, namely a universal factor times a rational function. Moreover, the theorem enables us to derive beautiful enumeration formulas for Aztec diamonds with dimer holes (i.e., domino shaped) just by finding the according rational function $f_{l,m,\alpha}(p)$. We give a couple of examples below.

\begin{exam}
    Here we list a couple of instances for the rational function $f_{l,m,\alpha}(p)$.
    \begin{align}
        f_{0,0,1}(p)=f_{1,1,1}(p)=f_{2,0,1}(p)&=2, \notag \\
        f_{1,-1,1}(p)=f_{0,-2,1}(p)&=-2, \notag \\
        f_{0,0,3}(p)=f_{1,-1,3}(p)=f_{0,-1,2}(p)=f_{1,0,2}&=0,\notag \\
        f_{0,-2,3}(p)&=-\frac{4(1+2p)^2}{(1+p)^2}, \notag \\
        f_{3,1,3}(p)&=\frac{8(1+2p)}{1+p},\notag\\
        f_{0,-4,1}(p)&=-\frac{2(5+6p+3p^2)}{(1+p)^2},\notag \\
        f_{0,-1,0}(p)&=-1, \notag\\
        f_{2,3,2}(p)&=\frac{4(-1+6p+8p^2)}{(1+p)(-1+2p)},\notag \\
        f_{4,-3,0}(p)&=\frac{1-3p-6p^2}{(1+p)(-1+2p)}. \notag
    \end{align}
    All these rational functions lead to nice expressions for placement probabilities or numbers of tilings of Aztec diamonds with dimer defects (i.e., tilings with a tile fixed at a certain position). However, the complexity of the rational function increases with the distance to the origin. For example, for the values $(l,m)=(7,6)$ and $\alpha=0$ we have
    \[f_{7,6,0}(p)=\frac{-90 + 441 p + 756 p^2 - 497 p^3 - 462 p^4 + 84 p^5 + 56 p^6}{(1 + p) (2 + p) (3 + p) (-5 + 2 p) (-3 + 2 p) (-1 + 2 p)}.\]
    All these results are proven in the sense that we translated the single steps of the proof of the main result into a recursive algorithm that computes the rational polynomials automatically.
\end{exam}

For large values of $l$ or $m$ the recursive algorithm for creating the rational functions $f_{l,m,\alpha}(p)$ becomes inefficient. One might therefore try to find $f_{l,m,\alpha}(p)$ via (polynomial) interpolation. The following result offers the necessary bounds on the polynomial degrees for this.  

\begin{thm}\label{shortadv}
    Let $m,l\in\Z$ and $\alpha\in\{0,1,2,3\}$ such that $l+m\equiv \alpha+1\mod 2$. Then there exist polynomials $Z_{l,m,\alpha}(p)$ and $D_{l,m,\alpha}(p)$ in $p$ with
    \[\mathbb{P}(l,m;4p+\alpha)=\frac{1}{4}+2^{-4p-\alpha}\binom{2p-1}{p}^2\frac{Z_{l,m,\alpha}(p)}{D_{l,m,\alpha}(p)}\]
    where $\deg(Z_{l,m,\alpha})\leq \deg(D_{l,m,\alpha})\leq \max(|l|,|m|)+\big\lfloor\min(|l|+3,|m|)/2\big\rfloor +1$ and $D_{l,m,\alpha}(p)$ is explicitly known.
\end{thm}

Theorem~\ref{shortadv} is presented in a more precise version as Theorem~\ref{adv1/4}. Moreover, we should mention that experiments suggest an even better bound for the degrees of the numerator and denominator polynomials of $f_{l,m,\alpha}(p)$. Namely, we expect the following to hold.
\begin{conj}
    Let $Z(l,m,\alpha;p)$ and $D(l,m,\alpha;p)$ be the minimal polynomials such that
    \[f_{l,m,\alpha}(p)=\frac{Z(l,m,\alpha;p)}{D(l,m,\alpha;p)}.\]
    Then $\deg(Z(l,m,\alpha;p))\leq\deg(D(l,m,\alpha;p))\leq \max(|l|,|m|).$
\end{conj}

In Corollary~\ref{divVA} and Corollary~\ref{advm=0} we verify the conjecture for positions $(l,0)$, $(0,m)$ and $(1,m)$ for arbitrary $l,m\in \Z$. However, it still remains open for more general locations.
\newline

This article is organised as follows. In Section~\ref{sectk} we gather a couple of tools and known results about placement probabilities in the Aztec diamond. We study the \textit{creation rate} $\Cr(l,m;n)$, a quantity that arises from the Domino Shuffling algorithm. As we will see, the creation rate describes, up to some shift of the size of the diamond, the difference between the placement probabilities of vertically neighbouring positions. In particular, if Theorem~\ref{1/4} holds for $\mathbb{P}(l,m;n)$ and $\mathbb{P}(l,m-1;n-1)$ then also the creation rate should grow in a similar manner. I.e., we would expect
\[\Cr(l,m;n) =2^{-4p-\alpha+1}\binom{2p-1}{p}^2g_{l,m,\alpha}(p)\]
for some rational function $g_{l,m,\alpha}(p)$. 

The idea is therefore, to prove the so-to-speak homogeneous version of Theorem \ref{1/4} for all creation rates $\Cr(l,m;n)$ to reduce the problem for $\mathbb{P}(l,m;n)$ to certain positions $(l',m')$ closer to the origin. Moreover, for the creation rate an explicit formula in terms of Kravchuk polynomials is known. 

In Section~\ref{Kravchukgrogo} we show a theorem telling that already Kravchuk polynomials grow in a way related to the expression in Theorem~\ref{1/4}. From this we derive the statement for the creation rates.

Finally, in Section~\ref{Mainresult} we present the proof of our main result. The three major steps in the proof are organised in seperate subsections. First we show, that it is enough to prove Theorem~\ref{1/4} for positions along the horizontal axis. Afterwards, we further reduce the problem to the case checking whether the statement of the theorem holds at the origin. Eventually, we compute the exact placement probabilities for $\mathbb{P}(0,0;n)$ which finishes the proof.

In Section~\ref{RemA} we make a remark about the asymptotic behaviour of the placement probabilities. These results have actually already been worked out by Cohn, Elkies and Propp~\cite{localstats}.

In Section~\ref{dimdef}, we apply Theorem~\ref{1/4} to obtain enumeration formulas counting domino tilings of Aztec diamonds with a $2\times2$-square hole at an arbitrary position. In particular, we compute the number of tilings of Aztec diamonds of size $4p$ and $4p+1$ with a central $2\times2$ square hole poked into them. This result serves as a complement to a theorem by Mihai Ciucu~\cite{Ciucu2} counting the tilings of holey Aztec diamonds of size $4p+2$ and $4p+3$.

In the appendix we have a closer look at the rational functions $f_{l,m,\alpha}(p)$. While it seems to be impossible to present a simple expression for $f_{l,m,\alpha}(p)$ at general positions, by a more thorough analysis of our recursive arguments we are indeed able to give explicit formulas for its denominator and also give linear degree bounds for the numerator polynomial. In particular, we prove Theorem~\ref{shortadv} as an improvement to Theorem~\ref{1/4}.
The proof follows a similar line of arguments as for our main result. However, this and related statements on degree bounds turn out to be far more technical without yielding further mathematical insights on the matter. That is the main reason these last results are covered in the appendix of this article.

\section{Relevant relations of placement probabilities} \label{sectk}

In this section we gather the central tools and known results we will need to prove Theorem \ref{1/4}. Our work is mainly built upon the article by Cohn, Elkies and Propp~\cite{localstats}. There the authors compute the asymptotic value of the placement probability for any relative position. In our case, we are interested in the \textit{exact} value of these probabilities at a \textit{fixed} position. However, we will make use of their observations concerning the \textit{net creation rates}, a quantity emerging again from the Domino Shuffling algorithm.

\begin{defin}[\cite{localstats}]
    The creation rate is defined as 
    \[\Cr(l,m;n):= 2(\mathbb{P}(l,m;n)-\mathbb{P}(l,m-1;n-1)).\]
\end{defin}
 What comes in handy is that we have an explicit formula for the creation rates in terms of Kravchuk polynomials. These are a family of discrete orthogonal polynomials (see \cite{orthopoly}) defined as follows.

\begin{defin}[Kravchuk polynomial]
The Kravchuk polynomial $\Krav(a,b;n)$ is given as the coefficient of $z^a$ in $(1+z)^{n-b}(1-z)^b.$
%\begin{align}
%    \langle z^a\rangle (1+z)^{n-b}(1-z)^b &=  \langle z^a\rangle \Big(\sum_{j=0}^{n-b}\binom{n-b}{j}z^j\Big)\Big(\sum_{k=0}^b(-1)^k\binom{b}{k}z^k\Big)\notag\\
%    &= \langle z^a\rangle \sum_{j=0}^{n-b}\sum_{k=0}^b(-1)^{k}\binom{n-b}{j}\binom{b}{k}z^{j+k} \notag\\
%    &=\sum_{k=0}^{b}(-1)^{k}\binom{n-b}{a-k}\binom{b}{k} \notag.
%\end{align}
Thus we have explicitly,
\[\Krav(a,b;n)=\sum_{j=0}^a(-1)^j\binom{b}{j}\binom{n-b}{a-j}.\]
\end{defin}

The following formula is stated in \cite[Prop. 2]{localstats} but also follows from the main results in the thesis of Harald Helfgott~\cite{helfgott}.

\begin{lem} \label{Creaform} 
    Let $n>0$. Suppose $l$ and $m$ are integers with $l+m\equiv n$~\em{mod}~$2$ \em and\/ $|l|+|m|\leq n$. Set $a:= (l+m+n)/2$ and\/ $b:= (l-m+n)/2$, then
    \[\Cr(l,m;n+1)=2^{-n}\Krav(a,b;n) \Krav(b,a;n).\]
    For all other integers $l$ and $m$ we have $\Cr(l,m;n+1)=0$.
\end{lem}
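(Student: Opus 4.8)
The plan is to establish an explicit formula for the creation rate $\Cr(l,m;n+1)$ in terms of a product of two Kravchuk polynomials. The statement has the flavor of a local-statistics formula arising from the Domino Shuffling algorithm, so I would proceed by relating the creation rate to a determinantal or product structure that the shuffling dynamics impose on the inverse Kasteleyn matrix, and then identify the resulting entries as Kravchuk polynomials.

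\medskip

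\noindent\textbf{Step 1: Reduce to a known local-statistics computation.}
First I would recall from \cite{localstats} (or equivalently from Helfgott~\cite{helfgott}) the explicit expression for the one-point function governing domino placements in the Aztec diamond. The Domino Shuffling algorithm produces the tiling of the size-$(n+1)$ diamond from that of size $n$ by creating, destroying, and sliding dominoes; the \emph{net creation rate} isolates exactly the contribution of newly created horizontal dominoes at position $(l,m)$. I would express $\mathbb{P}(l,m;n)$ and $\mathbb{P}(l,m-1;n-1)$ through the same underlying edge-probability formula (a ratio of weighted matching counts, i.e.\ a coupling-function / inverse-Kasteleyn entry), so that their difference, scaled by $2$, collapses to a single clean quantity.

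\medskip

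\noindent\textbf{Step 2: Set up the change of variables and the parity/support condition.}
With $a=(l+m+n)/2$ and $b=(l-m+n)/2$ I would verify that the hypotheses $l+m\equiv n \pmod 2$ and $|l|+|m|\le n$ translate precisely into $a,b\in\{0,1,\dots,n\}$, i.e.\ that $a$ and $b$ are legitimate indices for the Kravchuk polynomials $\Krav(\,\cdot\,,\,\cdot\,;n)$; the parity condition guarantees integrality and the inequality confines $(l,m)$ to the diamond. Outside this region the position lies beyond the diamond (or has the wrong colour), so the placement is impossible and $\Cr=0$, disposing of the ``all other integers'' case immediately.

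\medskip

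\noindent\textbf{Step 3: Identify the Kravchuk structure and match normalizations.}
The heart of the argument is to show that the single quantity from Step~1 factors as $2^{-n}\Krav(a,b;n)\Krav(b,a;n)$. I would use the generating-function definition: $\Krav(a,b;n)$ is the coefficient of $z^a$ in $(1+z)^{n-b}(1-z)^b$. The product of two such polynomials with the roles of $a$ and $b$ exchanged naturally arises from a two-dimensional contour-integral (or double coefficient-extraction) representation of the edge probability, where the two lattice directions of the Aztec diamond contribute the two factors. Concretely, I expect the inverse-Kasteleyn entry to admit a double-sum expression which, after extracting coefficients, splits into $\Krav(a,b;n)\Krav(b,a;n)$; the power $2^{-n}$ then emerges as the overall normalization coming from the total tiling count $2^{n(n+1)/2}$ and the factor of $2$ in the definition of $\Cr$.

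\medskip

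\noindent\textbf{Main obstacle.}
The delicate part will be Step~3: correctly matching the combinatorial/analytic expression for the creation rate to the \emph{symmetric} product $\Krav(a,b;n)\Krav(b,a;n)$, including tracking the signs (since Kravchuk polynomials alternate) and the exact power of $2$. The symmetry under swapping $a\leftrightarrow b$, which corresponds geometrically to reflecting the diamond across the line $m=0$, must be shown to be respected by the formula. Rather than rederive this from scratch, since the excerpt permits citing \cite[Prop.~2]{localstats} and \cite{helfgott}, I would anchor the proof on that cited result and supply the verification of the index dictionary (Step~2) together with the bookkeeping of normalizations (the $2^{-n}$ and the support condition) as the genuinely new content to be checked.
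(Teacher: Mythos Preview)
The paper does not prove this lemma at all: it is simply quoted as \cite[Prop.~2]{localstats} (with a pointer to Helfgott~\cite{helfgott}) and used as a black box. Your final paragraph already lands on exactly this---anchor the statement on the cited result and only check the index dictionary and support condition---so in the end you are doing precisely what the paper does. The preceding Steps~1--3 are an outline of how one \emph{might} rederive the formula from scratch via inverse-Kasteleyn/coupling-function computations, which is reasonable context but not something the paper supplies or requires; if you keep that material, be aware it is a sketch rather than a proof, and the ``delicate part'' you flag (matching signs, the exact power of $2$, and the factorization into two Kravchuk polynomials) is genuinely nontrivial and is the substance of the cited references, not something resolved by your outline.
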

This already yields the hint, that we actually need to prove a statement about the growth of Kravchuk polynomials. Hence, the idea is to prove that Kravchuk polynomials grow in a way connected to Theorem \ref{1/4} and deduce from that the correct growth rate for the creation rates $\Cr(l,m;n)$. To do so, we will need the following two relations of Kravchuk polynomials. They are taken from \cite{ErrorCorrect}, however for a general introduction to the theory of hypergeometric orthogonal polynomials see \cite{orthopoly}. First we have a symmetry relation.

\begin{lem}[{\cite[Thm. 17]{ErrorCorrect}}]\label{kravsum}
    \[\Krav(b,a;n)=\frac{a!(n-a)!}{b!(n-b)!}\Krav(a,b;n).\]
\end{lem}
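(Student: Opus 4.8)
The plan is to prove the identity directly from the explicit binomial-sum expression for the Kravchuk polynomial, by checking that it holds summand by summand once both sides are expanded into factorials. I would start from
\[\Krav(a,b;n)=\sum_{j}(-1)^j\binom{b}{j}\binom{n-b}{a-j},\]
where I let $j$ range over all of $\Z$ under the standard convention that a binomial coefficient with a negative or out-of-range argument (equivalently, a reciprocal factorial of a negative integer) is zero; this automatically encodes the limits $0\le j\le a$ appearing in the stated formula and will make the range bookkeeping symmetric. The first step is to multiply the generic summand by the prefactor $a!(n-a)!/\bigl(b!(n-b)!\bigr)$ and write out every binomial coefficient, using $\binom{b}{j}\binom{n-b}{a-j}=\frac{b!(n-b)!}{j!(b-j)!(a-j)!(n-a-b+j)!}$.

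The heart of the argument is then a one-line observation: after cancelling $b!(n-b)!$ against the prefactor, the $j$-th summand of $\frac{a!(n-a)!}{b!(n-b)!}\Krav(a,b;n)$ becomes
\[(-1)^j\,\frac{a!(n-a)!}{j!(a-j)!(b-j)!(n-a-b+j)!},\]
which is manifestly invariant under the exchange $a\leftrightarrow b$. I would next expand $\Krav(b,a;n)=\sum_j(-1)^j\binom{a}{j}\binom{n-a}{b-j}$ in exactly the same way and observe that its $j$-th summand is precisely the displayed symmetric expression. Since the two sums agree term by term, summing over $j$ gives $\frac{a!(n-a)!}{b!(n-b)!}\Krav(a,b;n)=\Krav(b,a;n)$, which is the claim.

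The only genuinely delicate point — and hence what I would treat as the main obstacle — is the bookkeeping of the summation ranges: the two sums $\sum_{j=0}^{a}$ and $\sum_{j=0}^{b}$ have different nominal limits, so the term-by-term comparison is only legitimate once I have justified that extending both to all $j\in\Z$ introduces no spurious terms. This is exactly where the convention that out-of-range binomials vanish does the work, since the factor $\frac{1}{j!(a-j)!(b-j)!(n-a-b+j)!}$ is zero unless $\max(0)\le j$ and $j\le\min(a,b)$ and $n-a-b+j\ge 0$, so both sums are supported on the same finite set of indices. As an independent sanity check one could instead attempt the generating-function route, rewriting $\Krav(a,b;n)=[z^a](1+z)^{n-b}(1-z)^b$ and applying the substitution $z=(1-w)/(1+w)$ to interchange the roles of $a$ and $b$; this does reproduce the factor $2^n w^b/(1+w)^n$ but forces one to track a contour around $w=1$ rather than $w=0$, which makes it messier than the elementary factorial comparison, so I would present the direct computation as the actual proof.
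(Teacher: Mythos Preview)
Your argument is correct. Expanding both sides into factorials and observing that the common summand
\[
(-1)^j\,\frac{a!(n-a)!}{j!(a-j)!(b-j)!(n-a-b+j)!}
\]
is symmetric under $a\leftrightarrow b$ is exactly the clean way to see the identity, and your handling of the summation range via the convention $1/(-k)!=0$ is the standard device that makes the term-by-term comparison legitimate (in the regime $0\le a,b\le n$ relevant to the paper).

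As for comparison: the paper does not actually prove this lemma. It is quoted verbatim from the reference \cite{ErrorCorrect} (there stated as Theorem~17) and used as a black box. So your write-up supplies a self-contained elementary verification where the paper simply cites the literature; there is no methodological difference to discuss beyond that.
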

And secondly, we have the typical three term recurrence.
\begin{lem}[{\cite[Thm. 19]{ErrorCorrect}}]\label{Kravrec}
   The Kravchuk polynomials fulfil the following recurrence,
    \[(a+1)\Krav(a+1,b,n)=(n-2b)\Krav(a,b,n)-(n-a+1)\Krav(a-1,b,n)\]
    and thus
    \[\Krav(a+1,b;n)=\frac{n-2b}{a+1}\Krav(a,b;n)-\frac{n-a+1}{a+1}\Krav(a-1,b;n))\]
    or the other way around
    \[\Krav(a-1,b;n)=\frac{-a-1}{n-a+1}\Krav(a+1,b;n)+\frac{n-2b}{n-a+1}\Krav(a,b;n).\]
    
\end{lem}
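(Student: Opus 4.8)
The plan is to derive all three displayed forms from the defining generating function of the Kravchuk polynomials, converting the product into a first-order differential equation and then comparing coefficients of $z^a$. Recall that, for fixed $b$ and $n$, the definition packages the whole family into one polynomial,
\[
F_b(z) := (1+z)^{n-b}(1-z)^b = \sum_{a=0}^{n}\Krav(a,b;n)\,z^a.
\]
Everything will follow from understanding how $F_b$ behaves under differentiation.

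First I would compute the logarithmic derivative of the product. Differentiating each factor gives
\[
F_b'(z) = \left(\frac{n-b}{1+z}-\frac{b}{1-z}\right)F_b(z),
\]
and multiplying through by $1-z^2=(1+z)(1-z)$ clears both denominators, leaving the clean first-order equation
\[
(1-z^2)\,F_b'(z) = \bigl((n-2b)-nz\bigr)F_b(z).
\]
This is the crux of the argument: the product structure of $F_b$ has been replaced by a single relation whose coefficients are linear in $z$, which is exactly what turns into a three-term recurrence.

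Next I would extract the coefficient of $z^a$ on both sides, abbreviating $K_a := \Krav(a,b;n)$. Since $F_b'(z)=\sum_a (a+1)K_{a+1}z^a$, multiplication by $1-z^2$ shifts the second copy by two powers, so the left-hand coefficient of $z^a$ becomes $(a+1)K_{a+1}-(a-1)K_{a-1}$; on the right, multiplication by $(n-2b)-nz$ gives $(n-2b)K_a-nK_{a-1}$. Equating the two and using $(a-1)-n=-(n-a+1)$ to collect the $K_{a-1}$ terms yields precisely
\[
(a+1)\Krav(a+1,b;n)=(n-2b)\Krav(a,b;n)-(n-a+1)\Krav(a-1,b;n).
\]
The second displayed form is then obtained by dividing through by $a+1$, and the third by instead solving this same identity for the lowest-index term $\Krav(a-1,b;n)$.

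I do not anticipate a real obstacle; once the ODE is established the rest is bookkeeping. The only points needing a little care are the index shifts at the boundary, where one uses that $K_a=0$ for $a<0$ and $a>n$ so the formal coefficient manipulations match the finite sums, and keeping the signs straight when clearing $1-z^2$. A direct alternative would be to substitute the explicit expansion $\Krav(a,b;n)=\sum_j(-1)^j\binom{b}{j}\binom{n-b}{a-j}$ and verify the identity via Pascal's rule, but that is more laborious and less illuminating than the generating-function route, so I would present the latter (or, since this is \cite[Thm. 19]{ErrorCorrect}, simply cite it with this one-line derivation as justification).
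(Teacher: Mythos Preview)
Your argument is correct: the logarithmic-derivative computation gives exactly the ODE $(1-z^2)F_b'(z)=\bigl((n-2b)-nz\bigr)F_b(z)$, and extracting the coefficient of $z^a$ yields the first displayed recurrence, from which the other two follow by elementary algebra. The boundary remarks about $K_a=0$ for $a<0$ or $a>n$ are adequate.

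As for comparison with the paper: there is nothing to compare against. The paper does not prove this lemma; it simply quotes it from \cite[Thm.~19]{ErrorCorrect} and uses it later in Proposition~\ref{Kravreduce}. So you are supplying a proof where the paper offers only a citation, and the generating-function route you chose is the natural one given that the paper \emph{defines} $\Krav(a,b;n)$ as the coefficient of $z^a$ in $(1+z)^{n-b}(1-z)^b$.

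One incidental observation: if you actually carry out the last step and solve for $\Krav(a-1,b;n)$, you obtain
\[
\Krav(a-1,b;n)=\frac{-(a+1)}{\,n-a+1\,}\Krav(a+1,b;n)+\frac{n-2b}{\,n-a+1\,}\Krav(a,b;n),
\]
with denominator $n-a+1$ in \emph{both} fractions. The paper's third display has $n-a-1$ in the first denominator, which appears to be a typo; your derivation gives the correct version.
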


%Later on, we will also make use of \textit{Kuo condensation}, a tool from general perfect matching enumeration.

%\begin{thm}[{\cite[Prop.~1.1]{Kuo}}] \label{Kuo}
%    Let $G=(V,E)$ be a planar graph and let $a,b,c,d\in V$ be four vertices appearing in cyclic order in a face of $G$. As before, denote by $M(H)$ the number of perfect matchings of a graph $H$. Then we have
%    \begin{align}
%        M(G)M(G\bs\{a,b,c,d\})=&M(G\bs\{a,b\})M(G\bs\{c,d\})-M(G\bs\{a,c\})M(G\bs\{b,d\})\notag \\
%        &+M(G\bs\{a,d\})M(G\bs\{b,c\}),\notag
%    \end{align}
%    where $G\bs\{v,v'\}$ means the graph obtained from $G$ by deleting the vertices $v,v'\in V$.
%\end{thm}

\section{The Growth Theorem for Kravchuk polynomials} \label{Kravchukgrogo}
In this section we show that the Kravchuk polynomial expressions grow in a suitable way with respect to the formula stated in Theorem \ref{1/4}. More precisely, we prove the following result.
\begin{thm}[\sc{The Growth Theorem for Kravchuk polynomials}]\label{growing1}
    Let $a,b\in \N$ and $\alpha\in\{0,1,2,3\}$. Then we have
    \[\Krav(a+2p,b+2p; 4p+\alpha-1)=(-1)^{p}\binom{2p-1}{p}g_{a,b,\alpha}(p),\]
    where $g_{a,b,\alpha}(p)$ is a rational function in $p$.
\end{thm}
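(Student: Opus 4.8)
The plan is to reduce the statement to an elementary fact about the growth of near-central binomial coefficients, working directly from the generating-function definition of $\Krav$ rather than from the three-term recurrence. The engine of the whole argument is the following observation, which I would isolate as a lemma: for any fixed integers $r,s$ (independent of $p$) one has
\[\binom{2p+r}{p+s}=\binom{2p-1}{p}\,R_{r,s}(p)\]
for some rational function $R_{r,s}$. This is immediate from the quotient of factorials, since $\frac{(2p+r)!}{(2p-1)!}$, $\frac{p!}{(p+s)!}$ and $\frac{(p-1)!}{(p+r-s)!}$ are each, after cancellation, a ratio of two products of finitely many linear factors in $p$, hence rational in $p$. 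As this is an identity of rational functions, it suffices to verify the theorem for all sufficiently large $p$ and then invoke the fact that two rational functions agreeing at infinitely many integers coincide.

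With this in hand I would rewrite the Kravchuk polynomial from its definition: $\Krav(a+2p,b+2p;4p+\alpha-1)$ is the coefficient of $z^{a+2p}$ in
\[(1+z)^{2p+\alpha-1-b}(1-z)^{2p+b}.\]
For all large $p$ both exponents are positive, so this is a genuine polynomial and no formal-power-series subtleties arise. I would then factor out the largest common power of $(1-z^2)=(1+z)(1-z)$: since the difference of the two exponents is the fixed integer $2b-\alpha+1$, one obtains either $(1-z^2)^{2p+\alpha-1-b}(1-z)^{2b-\alpha+1}$ (when $2b\ge\alpha-1$) or $(1-z^2)^{2p+b}(1+z)^{\alpha-1-2b}$ (otherwise); the symmetry relation Lemma~\ref{kravsum} lets one pass between these two cases, so only one needs detailed treatment. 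In either case the polynomial splits as $(1-z^2)^{2p+e}R(z)$ with $e$ constant and $R$ a fixed polynomial of degree $|2b-\alpha+1|$, \emph{crucially independent of $p$}.

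Extracting the coefficient now yields a sum with a bounded number of terms,
\[\Krav(a+2p,b+2p;4p+\alpha-1)=\sum_{i}r_i\,(-1)^{(a+2p-i)/2}\binom{2p+e}{\,p+(a-i)/2\,},\]
where the $r_i$ are the coefficients of $R$ and only indices with $i\equiv a\pmod 2$ contribute. Writing $(-1)^{(a+2p-i)/2}=(-1)^p(-1)^{(a-i)/2}$ pulls a global $(-1)^p$ to the front, and applying the binomial lemma to each $\binom{2p+e}{p+(a-i)/2}$ pulls out $\binom{2p-1}{p}$; what remains is a finite sum of rational functions in $p$, which is the desired $g_{a,b,\alpha}(p)$.

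The only genuine difficulty is bookkeeping: one must ensure the number of summands stays bounded as $p$ grows, which is exactly why I insist on bringing both exponents to positive values (valid for large $p$) before factoring — a careless formal expansion of $(1+z)^{2p+\alpha-1-b}$ when $b>\alpha-1$ would produce a number of terms growing with $p$ and wreck the rationality claim. An alternative, more algorithmic route avoids generating functions and instead propagates the claimed form along the lattice of pairs $(a,b)$: under the substitution $a\mapsto a+2p$, $b\mapsto b+2p$, $n\mapsto 4p+\alpha-1$, the recurrence of Lemma~\ref{Kravrec} acquires coefficients $\frac{\alpha-1-2b}{a+2p+1}$ and $\frac{2p+\alpha-a}{a+2p+1}$ that are themselves rational in $p$, and together with the rational-in-$p$ factor supplied by Lemma~\ref{kravsum} for exchanging arguments, this shows the form $(-1)^p\binom{2p-1}{p}\cdot(\text{rational})$ is preserved under unit steps in either coordinate. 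Seeding with the diagonal value $\Krav(2p,2p;4p+\alpha-1)$ computed once by the generating function then gives both the theorem and the explicit recursive algorithm for the $g_{a,b,\alpha}$ promised in the remark after Theorem~\ref{1/4}.
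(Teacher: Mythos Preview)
Your proof is correct and, in its primary route, takes a genuinely more direct path than the paper. The paper first uses the symmetry relation (Lemma~\ref{kravsum}) and the three-term recurrence (Lemma~\ref{Kravrec}) to reduce the claim to the twelve seed cases $(a,b)\in\{(0,0),(1,0),(1,1)\}$, $\alpha\in\{0,1,2,3\}$, and only then invokes the generating-function factorisation $(1\pm z)^{\cdots}(1-z^2)^{\cdots}$ to evaluate each seed by hand. You instead apply that factorisation once and for all to the general $(a,b)$, observing that the leftover factor $R(z)$ has degree $|2b-\alpha+1|$ independent of $p$, so the coefficient extraction yields a \emph{bounded} sum of terms $\binom{2p+e}{p+c}$; your near-central binomial lemma then strips off $(-1)^p\binom{2p-1}{p}$ uniformly. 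This buys you a closed finite-sum expression for $g_{a,b,\alpha}(p)$ in one stroke, whereas the paper's reduction buys the explicit recursion for the $g_{a,b,\alpha}$ that feeds the algorithm mentioned after Theorem~\ref{1/4}. Your alternative route (propagating the form along the $(a,b)$-lattice via the recurrence with rational-in-$p$ coefficients, seeded by a single generating-function computation) is exactly the paper's argument.

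One small point of care: the sentence ``it suffices to verify the theorem for all sufficiently large $p$ and then invoke the fact that two rational functions agreeing at infinitely many integers coincide'' is not quite right as stated, since neither side is a priori rational in $p$ (and the factor $(-1)^p$ certainly is not). Fortunately you do not actually need this; your generating-function computation is valid verbatim for every $p$ with $2p+\alpha-1-b\ge 0$, which is precisely the range in which $\Krav(a+2p,b+2p;4p+\alpha-1)$ lies in the standard parameter domain, so no analytic continuation is required.
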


We show the statement above by reducing it to a finite amount of basic cases. To do so, we introduce the following shorthand-terminology: we say ``\textit{Theorem \ref{growing1} holds for $(a,b)\in \N^2$}\,'' if 
\[\Krav(a+2p,b+2p; 4p+\alpha-1)=(-1)^{p}\binom{2p-1}{p}g_{a,b,\alpha}(p)\]
for all $\alpha\in\{0,1,2,3\}.$ Then the following proposition tells us how to reduce the problem.

\begin{prop}\label{Kravreduce}  
The following three statements are true.

\begin{itemize}
    \item[(i)] If the assertion in Theorem \ref{growing1} is true for $(a,b)\in \N^2$, then it is also true for $(b,a)$.
    \item[(ii)] If the assertion in Theorem \ref{growing1} is true for $(0,b)\in \N^2$ and $(1,b)\in \N^2$, then it is also true for all $(n,b)$ with $n\in \N$ arbitrary.
    \item[(iii)] If the assertion in Theorem \ref{growing1} is true for $(a,b)=(0,0),(1,0)$ and $(1,1)$, then Theorem \ref{growing1} holds for all $a,b\in \N$ and all values for $\alpha$.
\end{itemize}
\end{prop}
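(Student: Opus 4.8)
The plan is to treat the three parts in order, drawing only on the two structural identities for Kravchuk polynomials recorded above: the symmetry relation of Lemma~\ref{kravsum} handles (i), the three-term recurrence of Lemma~\ref{Kravrec} handles (ii), and a purely logical assembly of (i) and (ii) handles (iii). The organising principle is that the factor $(-1)^p\binom{2p-1}{p}$ in Theorem~\ref{growing1} is \emph{universal}: it carries no dependence on $(a,b)$, so in every manipulation it simply factors out, leaving behind quotients and linear combinations of rational functions in $p$, which remain rational.

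For part (i), I would substitute $a\mapsto a+2p$, $b\mapsto b+2p$ and $n\mapsto 4p+\alpha-1$ into Lemma~\ref{kravsum}, obtaining
\[\Krav(b+2p,a+2p;4p+\alpha-1)=\frac{(a+2p)!\,(2p+\alpha-1-a)!}{(b+2p)!\,(2p+\alpha-1-b)!}\,\Krav(a+2p,b+2p;4p+\alpha-1).\]
Since the assertion holds for $(a,b)$, the Kravchuk polynomial on the right equals $(-1)^p\binom{2p-1}{p}g_{a,b,\alpha}(p)$. It then remains to check that the factorial prefactor is a rational function of $p$: for $p$ large all four arguments are nonnegative integers, and the quotient telescopes into a finite product of linear factors of the form $2p+c$ together with reciprocals of such, hence is rational in $p$. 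Multiplying by the rational $g_{a,b,\alpha}(p)$ defines $g_{b,a,\alpha}(p)$ and establishes the claim for $(b,a)$.

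For part (ii), I would apply Lemma~\ref{Kravrec} with $A=a+2p$, $B=b+2p$ and $N=4p+\alpha-1$, which relates $\Krav((a+1)+2p,b+2p;N)$ to its two predecessors $\Krav(a+2p,b+2p;N)$ and $\Krav((a-1)+2p,b+2p;N)$. The decisive simplification is that the recurrence coefficients become $\frac{N-2B}{A+1}=\frac{\alpha-1-2b}{2p+a+1}$ and $\frac{N-A+1}{A+1}=\frac{2p+\alpha-a}{2p+a+1}$, both rational in $p$ (the numerator $N-2B$ is in fact constant in $p$). Assuming the assertion for $(a-1,b)$ and $(a,b)$, the common factor $(-1)^p\binom{2p-1}{p}$ pulls out and the bracketed combination of $g_{a,b,\alpha}(p)$ and $g_{a-1,b,\alpha}(p)$ against these rational coefficients is again rational; this defines $g_{a+1,b,\alpha}(p)$. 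With the base cases $(0,b)$ and $(1,b)$ in hand, induction on the first argument then delivers all $(n,b)$.

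Finally, part (iii) is bookkeeping on top of (i) and (ii). From $(1,0)$ I get $(0,1)$ by (i). Applying (ii) with the second argument frozen at $0$ (base cases $(0,0),(1,0)$) yields all $(n,0)$, and with the second argument frozen at $1$ (base cases $(0,1),(1,1)$) yields all $(n,1)$; a further application of (i) converts these into $(0,b)$ and $(1,b)$ for every $b$. A last use of (ii), now with an arbitrary fixed second argument $b$, promotes $(0,b),(1,b)$ to all $(a,b)$. I expect the only genuinely delicate point to be the rationality bookkeeping in (i): one must interpret the factorial quotient for $p$ large enough that all arguments are nonnegative, and then invoke that an identity of rational functions holding for all large $p$ holds as an identity of rational functions. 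The recurrence step in (ii) and the logical assembly in (iii) should then be routine.
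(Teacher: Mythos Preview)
Your proposal is correct and follows essentially the same route as the paper: part~(i) via Lemma~\ref{kravsum}, part~(ii) by induction using the three-term recurrence of Lemma~\ref{Kravrec}, and part~(iii) by the same logical chaining of (i) and (ii). If anything, your treatment of the factorial quotient in (i)---noting that it telescopes to a finite product of linear factors in $p$ and invoking that the identity need only hold for $p$ large---is slightly more careful than the paper's, which simply asserts rationality.
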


\begin{proof}
        (i): Assume $\Krav(a+2p,b+2p; 4p+\alpha-1)=(-1)^{p}\binom{2p-1}{p}g(p)$. Then by using Lemma \ref{kravsum} we obtain for $\Krav(b+2p,a+2p,4p+\alpha -1)$ that
        \begin{align}
            \Krav(b+2p,a+2p,4p+\alpha -1)&= \notag\\ &\dfrac{(a+2p)!(4p+\alpha-a-2p)!}{(b+2p)!(4p+\alpha-b-2p)!}\Krav(a+2p,b+2p; 4p+\alpha-1) \notag \\
            &=\frac{(a+2p)!(2p+\alpha-a)!}{(b+2p)!(2p+\alpha-b)!}(-1)^{p}\binom{2p-1}{p}g(p) \notag\\
            &=(-1)^p \binom{2p-1}{p}\Bigg(\frac{(a+2p)!(2p+\alpha-a)!}{(b+2p)!(2p+\alpha-b)!}g(p)\Bigg), \notag 
        \end{align}
        where we notice that $\hat g(p):=\Big(\frac{(a+2p)!(2p+\alpha-a)!}{(b+2p)!(2p+\alpha-b)!}g(p)\Big)$ is again a rational function in $p$. \\
        (ii): Assume we have $\Krav(2p,b+2p; 4p+\alpha-1)=(-1)^{p}\binom{2p-1}{p}g_{0,b}(p)$ and \\ $\Krav(1+2p,b+2p; 4p+\alpha-1)=(-1)^{p}\binom{2p-1}{p}g_{1,b}(p)$. We use this as the start of an induction. Thus it remains to show, that if Theorem \ref{growing1} is true for $(n-2,b)$ and $(n-1,b)$ then it is also true for $(n,b)$. To see this we use Lemma \ref{Kravrec} which yields
        \begin{align}
           \Krav&(n+2p,b+2p,4p+\alpha-1)=\notag \\
           &\frac{4p+\alpha-1-2(b+2p)}{n+2p}\Krav(n-1+2p,b+2p,4p+\alpha-1)\notag \\
           &-\frac{4p+\alpha-1-(n-1+2p)+1}{n+2p}\Krav(n-2+2p,b+2p,4p+\alpha,-1)\notag \\
           &=\frac{\alpha-2b-1}{n+2p}\Big((-1)^p\binom{2p-1}{p}g_{n-1,b}(p)\Big)-\frac{2p+\alpha-n+1}{n+2p}\Big((-1)^p\binom{2p-1}{p}g_{n-2,b}(p) \notag \\
           &=(-1)^p\binom{2p-1}{p}\underbrace{\Bigg(\frac{\alpha-2b-1}{n+2p}g_{n-1,b}(p)-\frac{2p+\alpha-n+1}{n+2p}g_{n-2,b}(b)\Bigg)}_{g_{n,b}(p)}. \notag
        \end{align}
        Here we see again, that $g_{n,b}(p)$ is a rational function in $p$ provided that $g_{n-1,b}(p)$ and $g_{n-2,b}(p)$ are rational functions.
        
        (iii): Finally, assume that Theorem \ref{growing1} holds for $(a,b)=(0,0),(1,0)$ and $(1,1)$. If the theorem therefore holds for $(0,0)$ and $(1,0)$, (ii) implies that the theorem also holds for $(n,0)$ for all $n\in \N$. Furthermore, by (i), if the assertion is true for $(1,0)$ it is also true for $(0,1)$. Therefore, since we assume truth for the cases $(0,1)$ and $(1,1)$ we also have the theorem for the case $(n,1)$ with $n\in \N$ again arbitrary. Now, choose arbitrary $k,l\in \N$. By the arguments before, we have that the theorem holds for $(l,0)$ and $(l,1)$. Via the symmetry assertion in (i) we therefore also have the cases $(0,l)$ and $(1,l)$. However, again by (ii) this implies the assertion also for $(k,l)$. Since $k$ and $l$ where arbitrary, this proves the proposition.
\end{proof}

Proposition~\ref{Kravreduce} tells us that Theorem~\ref{growing1} is true, once we show it for the twelve cases of $(a,b)$ being $(0,0),(1,0),(1,1)$ and $\alpha$ varying over the values $0,1,2,3$. We summarize this last part of the proof in the following proposition.

\begin{prop}
    Let $a,b\in\{0,1\}$ but $(a,b)\neq (0,1)$ and $\alpha\in\{0,1,2,3\}$. Then we have
    \[\Krav(a+2p,b+2p,4p+\alpha-1)=(-1)^p\binom{2p-1}{p}g_{a,b,\alpha}(p),\]
    where the rational function $g_{a,b,\alpha}(p)$ can be read off Table \ref{tab:1}.
    \begin{table}[]
        \centering
        \begin{tabular}{|m{1.5cm}|m{1.3cm}|m{1cm}|m{1cm}|m{1cm}|}
        \hline
            $\alpha=$      &$0$\vphantom{$\displaystyle{\sum}$}  &$1$  &$2$  &$3$  \\
            \hline
            $g_{0,0,\alpha}(p)$ &$1$ &$2$&$2$&$\dfrac{2}{p+1}$\vphantom{$\displaystyle{\sum_n^n}$} \\
            \hline
            $g_{1,0,\alpha}(p)$ &$-1$\vphantom{$\displaystyle{\sum}$} &$0$  &$2$  &$4$ \\
            \hline
            $g_{1,1,\alpha}(p)$ &$\dfrac{3-2p}{-1+2p}$\vphantom{$\displaystyle{\sum_n^n}$}   &$-2$  &$-2$    &0\\ 
            \hline
        \end{tabular}
        \vskip10pt
        \caption{The rational function $g_{a,b,\alpha}(p).$}
        \label{tab:1}
    \end{table}
\end{prop}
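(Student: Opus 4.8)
The plan is to verify each of the twelve cases directly from the generating-function definition of the Kravchuk polynomials and to read off the rational function. By definition, $\Krav(a+2p,b+2p;4p+\alpha-1)$ is the coefficient of $z^{a+2p}$ in
\[(1+z)^{(4p+\alpha-1)-(b+2p)}(1-z)^{b+2p}=(1+z)^{2p+\alpha-1-b}(1-z)^{2p+b}.\]
Because $a,b\in\{0,1\}$, both exponents are $2p$ plus a small integer, and this is the structure I would exploit throughout.

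First I would strip off as many factors of $(1-z^2)=(1+z)(1-z)$ as possible. Setting $A=2p+\alpha-1-b$ and $B=2p+b$, one can write
\[(1+z)^A(1-z)^B=(1-z^2)^{\min(A,B)}\,R_{\alpha,b}(z),\]
where the leftover factor is $R_{\alpha,b}(z)=(1+z)^{A-B}$ if $A\ge B$ and $(1-z)^{B-A}$ if $A<B$. In all twelve cases $|A-B|=|\alpha-1-2b|\le 3$, so $R_{\alpha,b}$ is one of the fixed low-degree polynomials $1,\,1\pm z,\,(1+z)^2,\,(1-z)^2,\,(1-z)^3$, while the exponent $m:=\min(A,B)$ ranges over $\{2p-2,2p-1,2p,2p+1\}$. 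The coefficient extraction is then routine: expanding $(1-z^2)^m=\sum_k(-1)^k\binom{m}{k}z^{2k}$ and multiplying by $R_{\alpha,b}$, the requirement that the exponent equal $a+2p$ together with the fact that $(1-z^2)^m$ carries only even powers pins down $k$ for each monomial of $R_{\alpha,b}$ (monomials of the wrong parity simply drop out). Hence the answer collapses to a sum of at most two binomial coefficients, each of the form $\binom{m}{p}$ or $\binom{m}{p-1}$.

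The remaining task is to rewrite these sums as $(-1)^p\binom{2p-1}{p}$ times a rational function, for which I would use the elementary ratios
\[\frac{\binom{2p}{p}}{\binom{2p-1}{p}}=2,\qquad\frac{\binom{2p-2}{p-1}}{\binom{2p-1}{p}}=\frac{p}{2p-1},\qquad\frac{\binom{2p-2}{p}}{\binom{2p-1}{p}}=\frac{p-1}{2p-1},\]
all immediate from $\binom{n}{k}=\tfrac{n}{n-k}\binom{n-1}{k}$ and the symmetry $\binom{n}{k}=\binom{n}{n-k}$. In most cases a single surviving binomial produces a constant (the entries $\pm1,\pm2,4$ come straight from $\binom{2p}{p}=2\binom{2p-1}{p}$) or the coefficient vanishes outright (the zeros in the table, from parity mismatches in $R_{\alpha,b}$). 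The genuinely $p$-dependent entries, which I expect to be the only real---though still modest---obstacle, arise precisely in the two cases where two binomials combine: for $(a,b)=(0,0)$, $\alpha=3$, the Catalan identity $\binom{2p}{p}-\binom{2p}{p-1}=\tfrac{1}{p+1}\binom{2p}{p}$ yields the factor $\tfrac{2}{p+1}$; and for $(a,b)=(1,1)$, $\alpha=0$, where $R=(1-z)^3$ leaves the two terms $-3\binom{2p-2}{p}$ and $\binom{2p-2}{p-1}$, the ratios above give $\tfrac{-3(p-1)+p}{2p-1}=\tfrac{3-2p}{2p-1}$, matching the table entry $\tfrac{3-2p}{-1+2p}$. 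Every other entry then follows by the same mechanical recipe.
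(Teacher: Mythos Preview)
Your proposal is correct and follows essentially the same route as the paper: both arguments extract the coefficient from the generating function by pulling out the maximal power of $(1-z^2)$ and multiplying by the leftover low-degree polynomial, then reduce the resulting one or two binomial coefficients to a rational multiple of $\binom{2p-1}{p}$. The paper carries out the cases $(a,b,\alpha)=(0,0,0)$ and $(1,1,0)$ explicitly and declares the rest analogous; you describe the same mechanism more systematically and single out the two genuinely $p$-dependent entries, so there is no substantive difference.
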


\begin{proof}
    To prove the proposition we would need to verify the statement with the twelve rational functions provided in Table \ref{tab:1}, i.e., we would need to prove twelve different identities for alternating sums of products of binomial coefficients. Since the proof of all of these identities uses the same trick, we will only present it for the cases $(a,b,\alpha)=(0,0,0)$ and $(a,b,\alpha)=(1,1,0)$. The other cases work analogously.\\
    \textbf{Case $(a,b,\alpha)=(0,0,0)$:} We need to show
    \[\Krav(2p,2p,4p-1):=\sum_{j=0}^{2p}(-1)^j\binom{2p}{j}\binom{2p-1}{2p-j}=(-1)^j\binom{2p-1}{p}.\]
    The first equality is just the formula for Kravchuk polynomials. To see the second equality, look at the generating function $(1-z)^{2p}(1+z)^{2p-1}$. We have
    \begin{align}
        (1-z)^{2p}(1+z)^{2p-1}&=(1-z)(1-z^2)^{2p-1}=(1+z)\sum_{j=0}^{2p-1}(-1)^j\binom{2p-1}{j}z^{2j} \notag \\
        &=\sum_{j=0}^{2p-1}(-1)^j\binom{2p-1}{j}z^{2j}-\sum_{j=0}^{2p-1}(-1)^j\binom{2p-1}{j}z^{2j+1}. \label{Eq1}
    \end{align}
    However, on the other hand we also have
    \begin{align}
        (1-z)^{2p}(1+z)^{2p-1}&=\Bigg(\sum_{j=0}^{2p}(-1)^j\binom{2p}{j}z^j\Bigg)\Bigg(\sum_{k=0}^{2p-1}\binom{2p-1}{k}z^k\Bigg) \notag \\
        &=\sum_{k=0}^{2p-1}\sum_{j=0}^{2p}(-1)^j\binom{2p}{j}\binom{2p-1}{k}z^{j+k}. \label{Eq2}
    \end{align}
    Both expressions, (\ref{Eq1}) and (\ref{Eq2}), are equal. Now we compare the coefficient of $z^{2p}$ in both expressions.
    \begin{align}
        \langle z^{2p}\rangle \sum_{j=0}^{2p-1}(-1)^j\binom{2p-1}{j}z^{2j}-\sum_{j=0}^{2p-1}(-1)^j\binom{2p-1}{j}z^{2j+1}=(-1)^p\binom{2p-1}{p}-0 \notag
    \end{align}
    while
    \begin{align}
         \langle z^{2p}\rangle& \sum_{k=0}^{2p-1}\sum_{j=0}^{2p}(-1)^j\binom{2p}{j}\binom{2p-1}{k}z^{j+k}
         \underset{k+j=2p}{=} \sum_{j=0}^{2p}(-1)^j\binom{2p}{j}\binom{2p-1}{2p-j}.\notag
    \end{align}
    Hence the two rightmost expressions in the computations above must be equal.\\
    \textbf{Case $(a,b,\alpha)=(1,1,0)$:} We are playing the same game again. We need to show
    \[\Krav(1+2p,1+2p,4p-1):=\sum_{j=0}^{2p+1}(-1)^j\binom{2p+1}{j} \binom{2p-2}{2p+1-j}=(-1)^p\binom{2p-1}{p}\frac{3-2p}{-1+2p}.\]
    We compare the coefficient of $z^{2p+1}$ of the generating function $(1-z)^{2p+1}(1+z)^{2p-2}$:
    \begin{align}
        &\langle z^{2p+1}\rangle (1-z)^{2p+1}(1+z)^{2p-2}\notag \\
        =&\langle z^{2p+1}\rangle\Bigg(\sum_{j=0}^{2p+1}(-1)^j\binom{2p+1}{j}z^j\Bigg)\Bigg(\sum_{k=0}^{2p-2}\binom{2p-2}{k}z^k\Bigg) \notag \\
        =&\langle z^{2p+1}\rangle\sum_{k=0}^{2p-2}\sum_{j=0}^{2p+1}(-1)^j\binom{2p+1}{j}\binom{2p-2}{k}z^{j+k} \notag\\
        =&\sum_{j=0}^{2p+1}(-1)^j\binom{2p+1}{j}\binom{2p-2}{2p+1-j}. \notag
    \end{align}
    On the other hand, we have
    \begin{align}
        &\langle z^{2p+1}\rangle(1-z)^{2p+1}(1+z)^{2p-2}\notag \\
        &=\langle z^{2p+1}\rangle (1-z)^{3}(1-z^2)^{2p-2} \notag \\
        &=\langle z^{2p+1}\rangle(1-3z+3z^2-z^3)\sum_{j=0}^{2p-2}(-1)^j\binom{2p-2}{j}z^{2j} \notag\\
        &=\langle z^{2p+1}\rangle\Bigg(\sum_{j=0}^{2p-2}(-1)^j\binom{2p-2}{j}z^{2j}-3\sum_{j=0}^{2p-2}(-1)^j\binom{2p-2}{j}z^{2j+1}\notag\\&+3\sum_{j=0}^{2p-2}(-1)^j\binom{2p-2}{j}z^{2j+2}-\sum_{j=0}^{2p-2}(-1)^j\binom{2p-2}{j}z^{2j+3}\Bigg) \notag \\
        &= 0-3(-1)^p\binom{2p-2}{p}+0-(-1)^{p-1}\binom{2p-2}{p-1} \notag \\
        &= (-1)^p\Big(-3\frac{(2p-2)!}{p!(p-2)!}+\frac{(2p-2)!}{(p-1)!(p-1)!} \Big)\notag \\
        &= (-1)^p\Big(-3\frac{(2p-1)!}{p!(p-1)!}\frac{p-1}{2p-1}+\frac{(2p-1)!}{p!(p-1)!}\frac{p}{2p-1}\Big) \notag \\
        &=(-1)^p\binom{2p-1}{p}\Big(\frac{3-3p+p}{2p-1}\Big)=(-1)^p\binom{2p-1}{p}\Big(\frac{3-2p}{-1+2p}\Big)\notag
    \end{align}
    just as desired. The other ten cases follow the same path of computations. This proves the proposition and therefore also Theorem \ref{growing1}.
\end{proof}

\begin{rem}
    By the recursion for Kravchuk polynomials and the arguments of Proposition \ref{Kravreduce} (i) and (ii), the statement of Theorem \ref{growing1} also extends to negative values for $a$ or $b$ as long as we start with a $p\in\N$ such that $a+2p\geq 0$ and $b+2p\geq 0$.
\end{rem}

In particular, we need the following identity for the case $a=0$ and $b=-1$ later on.
\begin{lem}\label{b-1}
    We have that 
    \[\Krav(2p,2p-1,4p-1)=(-1)^p\binom{2p-1}{p}.\]
\end{lem}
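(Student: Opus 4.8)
The plan is to extract the relevant coefficient directly from the generating function, exactly as in the case $(a,b,\alpha)=(0,0,0)$ treated above. By definition, $\Krav(2p,2p-1,4p-1)$ is the coefficient of $z^{2p}$ in $(1+z)^{(4p-1)-(2p-1)}(1-z)^{2p-1}=(1+z)^{2p}(1-z)^{2p-1}$. First I would use the factorisation $(1+z)^{2p}(1-z)^{2p-1}=(1+z)(1-z^2)^{2p-1}$, which collapses the bulk of the product into a polynomial in $z^2$ multiplied by the single linear factor $(1+z)$.

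Next I would expand $(1-z^2)^{2p-1}=\sum_{j=0}^{2p-1}(-1)^j\binom{2p-1}{j}z^{2j}$ and read off the coefficient of $z^{2p}$ in $(1+z)(1-z^2)^{2p-1}$. The summand $1$ contributes through the term with $2j=2p$, i.e. $j=p$, producing $(-1)^p\binom{2p-1}{p}$, whereas the summand $z$ only feeds into the odd powers $z^{2j+1}$ and therefore cannot contribute to $z^{2p}$. Hence the coefficient equals $(-1)^p\binom{2p-1}{p}$, which is the asserted identity.

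There is essentially no obstacle here: this is the cleanest member of the family of coefficient extractions from the preceding proposition, since the generating function factors into $(1+z)$ times a series supported purely on even powers, so a single term survives. The only points worth verifying are that $n-b=(4p-1)-(2p-1)=2p$ is the correct exponent of $(1+z)$, and that the target exponent $2p$ is even so that it aligns with an even power $2j$ of the series; both are immediate. This is exactly the instance of Theorem~\ref{growing1} with $a=0$, $b=-1$, $\alpha=0$ and $g_{0,-1,0}(p)=1$, consistent with the extension to negative values of $b$ recorded in the preceding remark.
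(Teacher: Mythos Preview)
Your proof is correct and follows essentially the same approach as the paper: both extract the coefficient of $z^{2p}$ in $(1+z)^{2p}(1-z)^{2p-1}$ by factoring it as $(1+z)(1-z^2)^{2p-1}$, expanding the even-supported series, and observing that only the $j=p$ term survives while the $z$-shifted sum contributes nothing to an even power.
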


\begin{proof}
    Just as before we have
    \begin{align}
        \Krav(2p,2p-1,4p-1)&= \langle z^{2p}\rangle (1-z)^{2p-1}(1+z)^{2p} \notag\\
        &=\langle z^{2p}\rangle (1+z) (1-z^2)^{2p-1}\notag\\
        &=\langle z^{2p}\rangle (1+z) \sum_{j=0}^{2p-1}(-1)^j\binom{2p-1}{j}z^{2j} \notag\\
        &=\langle z^{2p}\rangle \sum_{j=0}^{2p-1}(-1)^j\binom{2p-1}{j}z^{2j}+\sum_{j=0}^{2p-1}(-1)^j\binom{2p-1}{j}z^{2j+1} \notag\\
        &=(-1)^p\binom{2p-1}{p}+0. \notag
    \end{align}
\end{proof}

As a simple corollary of the Growth Theorem for Kravchuk polynomials we obtain the suitable growth for the creation rates.
\begin{cor} \label{Crgrowth}
    Let $n=4p+\alpha\geq 1$ and suppose $l$ and $m$ are integers with $l+m\equiv n-1$~\em{mod}~$2$ and $|l|+|m|\leq n-1$. Then
    \[\Cr(l,m;4p+\alpha)=2^{-4p-\alpha+1}\binom{2p-1}{p}^2f_{l,m,\alpha}(p)\]
    where $f_{l,m,\alpha}(p) $ is a rational function in $p$.
\end{cor}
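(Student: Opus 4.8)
The plan is to combine the explicit formula for the creation rate from Lemma~\ref{Creaform} with the Growth Theorem for Kravchuk polynomials (Theorem~\ref{growing1}) just proven. Recall that Lemma~\ref{Creaform} gives, under the hypotheses $l+m\equiv n-1 \bmod 2$ and $|l|+|m|\leq n-1$ (so that the formula applies with $n$ replaced by $n-1$ in the statement of that lemma, since the creation rate is indexed by $\Cr(\,\cdot\,;n+1)$), the identity
\[
\Cr(l,m;4p+\alpha)=2^{-(4p+\alpha-1)}\Krav(a,b;4p+\alpha-1)\,\Krav(b,a;4p+\alpha-1),
\]
where $a=(l+m+4p+\alpha-1)/2$ and $b=(l-m+4p+\alpha-1)/2$. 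The first thing I would do is verify that the parity and size conditions match precisely: the congruence $l+m\equiv n-1\bmod 2$ is exactly what makes $a$ and $b$ integers, and $|l|+|m|\leq n-1$ guarantees $a,b\in\{0,\dots,4p+\alpha-1\}$ so that the Kravchuk polynomials are the genuine ones.

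Next I would rewrite the arguments $a$ and $b$ in the shifted form demanded by Theorem~\ref{growing1}. Setting $a=a'+2p$ and $b=b'+2p$, one finds $a'=(l+m+\alpha-1)/2$ and $b'=(l-m+\alpha-1)/2$, which are fixed integers independent of $p$ (depending only on $l,m,\alpha$). Provided $p$ is large enough that $a',b'\geq 0$ — or invoking the Remark extending Theorem~\ref{growing1} to negative $a'$ or $b'$ with $a'+2p\geq 0$, $b'+2p\geq 0$ — the Growth Theorem applies to both factors. Thus
\[
\Krav(a'+2p,b'+2p;4p+\alpha-1)=(-1)^p\binom{2p-1}{p}g_{a',b',\alpha}(p),
\]
and symmetrically for the swapped pair $(b'+2p,a'+2p)$, yielding $(-1)^p\binom{2p-1}{p}g_{b',a',\alpha}(p)$, with both $g$-functions rational in $p$.

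Multiplying the two Kravchuk factors, the signs combine to $(-1)^p\cdot(-1)^p=1$, the binomial coefficients combine to $\binom{2p-1}{p}^2$, and the prefactor $2^{-(4p+\alpha-1)}=2^{-4p-\alpha+1}$ emerges exactly as in the target. Hence
\[
\Cr(l,m;4p+\alpha)=2^{-4p-\alpha+1}\binom{2p-1}{p}^2\,g_{a',b',\alpha}(p)\,g_{b',a',\alpha}(p),
\]
so I would set $f_{l,m,\alpha}(p):=g_{a',b',\alpha}(p)\,g_{b',a',\alpha}(p)$, which is a product of two rational functions of $p$ and therefore itself rational. The degenerate case $|l|+|m|>n-1$ is handled by the last clause of Lemma~\ref{Creaform}, giving $\Cr=0$, consistent with taking $f_{l,m,\alpha}\equiv 0$. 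I do not anticipate a serious obstacle here: the corollary is essentially a bookkeeping exercise in substituting one formula into another. The only point requiring genuine care is the index translation between the $n$-convention of Lemma~\ref{Creaform} and the $n=4p+\alpha$ convention of the corollary, together with ensuring that the Growth Theorem (or its negative-argument extension) legitimately covers the values of $a'$ and $b'$ that arise; both of these are routine but must be stated cleanly to make the argument rigorous.
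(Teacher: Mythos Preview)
Your proposal is correct and follows essentially the same approach as the paper: substitute Lemma~\ref{Creaform} into the creation rate and then invoke the Growth Theorem~\ref{growing1} on the Kravchuk factors. The only cosmetic difference is that the paper first uses the symmetry relation (Lemma~\ref{kravsum}) to rewrite $\Krav(b,a;n)$ as a rational multiple of $\Krav(a,b;n)$ and then squares, whereas you apply Theorem~\ref{growing1} separately to each of the two factors; either way the $(-1)^p$ signs cancel and one lands on $\binom{2p-1}{p}^2$ times a rational function.
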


\begin{proof}
    By Lemma \ref{Creaform} and Lemma \ref{kravsum} we have
    \begin{multline}
         \Cr(l,m;4p+\alpha)\\
         =2^{-4p-\alpha+1}\Krav\Big(\frac{l+m+4p+\alpha-1}{2},\frac{l-m+4p+\alpha-1}{2},4p+\alpha-1\Big)^2g(p) \notag
    \end{multline}
    with $g(p)$ the rational function obtained from the symmetry-factor in Lemma \ref{kravsum}. Now setting $a=\frac{l+m+\alpha-1}{2}$ and $b=\frac{l-m+\alpha-1}{2}$ we continue and see
    \[\Cr(l,m;4p+\alpha)=2^{-4p-\alpha+1}\Krav(a+2p,b+2p,4p+\alpha-1)^2g(p).\]
    If we now use the Growth Theorem for Kravchuk polynomials the assertion follows.
\end{proof}

\section{Proof of the main result} \label{Mainresult}
In this section we show Theorem~\ref{1/4}. We need to deduce that the statement is true for every position $(l,m)\in \Z^2$ for the horizontal domino tile whose left square is black. More precisely, for each position $(l,m)$ and each $\alpha\in\{0,1,2,3\}$ we want to show that the probability of observing such a tile at the position $(l,m)$ in the Aztec diamond of size $4p+\alpha$ is either zero or equal to
\[\mathbb{P}(l,m;4p+\alpha)=\frac{1}{4}+2^{-4p-\alpha}\binom{2p-1}{p}^2f_{l,m,\alpha}(p).\]
Our proof will proceed as follows: at first we show in Subsection~\ref{horax}, that Theorem~\ref{1/4} is true once it is shown for positions along the horizontal axis, i.e., we reduce the problem to identities for $\mathbb{P}(l,0;4p+\alpha)$ where $m=0$. In a second step, in Subsection~\ref{origin}, we use a recursion argument to see that its actually enough to show the certain shape of the formula for $\mathbb{P}(0,0;4p+1)$ and $\mathbb{P}(0,0;4p+3)$. Finally, we show in Subsection~\ref{final} the assertion of the theorem for these two cases. All other positions and cases modulo four can be obtained from them.

\subsection{Reduction to the horizontal axis}  \label{horax}
First of all, we use the symmetry of the Aztec diamond to observe that
\[\mathbb{P}(-l,m;4p+\alpha)=\mathbb{P}(l,m;4p+\alpha)\]
for all values $l,m\in\Z^2$ and $p\in\N,\alpha\in\{0,1,2,3\}$. Hence, we are allowed to assume $l\geq 0$. Moreover, the following statement holds.
\begin{prop} \label{horizon}
    If we have for all residues $\alpha\in\{0,1,2,3\}$ that either
    \[\mathbb{P}(l,0;4p+\alpha)=\frac{1}{4}+2^{-4p-\alpha}\binom{2p-1}{p}^2f(p)\]
    with $f(p)$ a rational function in $p$, whose shape depends only on $l\in\N$ and $\alpha$ \textbf{or} the probability equals $0$, then also for all $m\in\Z$ and $\hat \alpha\in\{0,1,2,3\}$ we find a rational function $\hat f(p)$ such that
    \[\mathbb{P}(l,m;4p+\hat\alpha)=\frac{1}{4}+2^{-4p-\alpha}\binom{2p-1}{p}^2\hat f(p)\]
    or $\mathbb{P}(l,m;4p+\hat\alpha)=0$.
\end{prop}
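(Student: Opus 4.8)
The plan is to exploit the creation rate as a discrete difference operator in the vertical direction, thereby transporting the hypothesis along the horizontal axis up to an arbitrary height $m$. Unwinding the definition $\Cr(l,m;n)=2(\mathbb{P}(l,m;n)-\mathbb{P}(l,m-1;n-1))$ gives the one-step relation
\[\mathbb{P}(l,m;n)=\mathbb{P}(l,m-1;n-1)+\tfrac12\Cr(l,m;n),\]
which lowers the height and the size by one simultaneously. Iterating this relation (and, for $m<0$, running it in the opposite direction via $\mathbb{P}(l,m;n)=\mathbb{P}(l,m+1;n+1)-\tfrac12\Cr(l,m+1;n+1)$) telescopes to
\[\mathbb{P}(l,m;n)=\mathbb{P}(l,0;n-m)\pm\tfrac12\sum_{j}\Cr(l,j;n-m+j),\]
a finite signed sum once $(l,m)$ is fixed, the sign and index range depending on the sign of $m$. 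I would fix the target size $n=4p+\hat\alpha$ and read off $n-m=4(p-t)+\alpha$ for the unique $t\in\Z$ and $\alpha\in\{0,1,2,3\}$ determined by $\hat\alpha-m$; likewise each summand $\Cr(l,j;n-m+j)$ has a size of the form $4(p+s_j)+\beta_j$.

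The arithmetic heart of the argument is a shift lemma for the universal factor: for every fixed integer $t$ the quotient $\binom{2(p+t)-1}{p+t}\big/\binom{2p-1}{p}$ is a rational function of $p$. I would prove this from the single step $\binom{2(p+1)-1}{p+1}=\tfrac{2(2p+1)}{p+1}\binom{2p-1}{p}$ by induction on $|t|$. Granting it, the hypothesis $\mathbb{P}(l,0;4(p-t)+\alpha)=\tfrac14+2^{-4(p-t)-\alpha}\binom{2(p-t)-1}{p-t}^2 f(p-t)$ rewrites as $\tfrac14+2^{-4p-\hat\alpha}\binom{2p-1}{p}^2\tilde f(p)$, because the power-of-two discrepancy $2^{4t+\hat\alpha-\alpha}$ is a constant, the binomial discrepancy is rational by the shift lemma, and $f(p-t)$ is rational in $p$. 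The very same manipulation applied to Corollary~\ref{Crgrowth} turns each $\Cr(l,j;4(p+s_j)+\beta_j)$ into $2^{-4p-\hat\alpha}\binom{2p-1}{p}^2$ times a rational function. Summing the finitely many pieces, the $\tfrac14$ coming from $\mathbb{P}(l,0;n-m)$ persists while every remaining contribution is the universal factor times a rational function, yielding $\mathbb{P}(l,m;4p+\hat\alpha)=\tfrac14+2^{-4p-\hat\alpha}\binom{2p-1}{p}^2\hat f(p)$ with $\hat f$ rational.

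It remains to reconcile the zero case and the parity bookkeeping. If $l+m\equiv\hat\alpha\pmod 2$, then $l+m\equiv n\pmod 2$ and $\mathbb{P}(l,m;4p+\hat\alpha)=0$ outright by the colouring rule, so there is nothing to prove. Otherwise I would check that the nonzero parity class is preserved throughout the recursion: the condition $l\equiv n-m\pmod 2$ for $\mathbb{P}(l,0;n-m)$ to vanish and the condition $l+j\equiv(n-m+j)-1\pmod 2$ for $\Cr(l,j;n-m+j)$ to fall under Corollary~\ref{Crgrowth} both reduce to $l+m\equiv n\pmod 2$, so in the nonzero case $\mathbb{P}(l,0;n-m)$ genuinely contributes a $\tfrac14$ and every creation rate is governed by the corollary. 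The one point needing care—and the main obstacle I anticipate—is that the creation-rate formula of Corollary~\ref{Crgrowth} holds only while the relevant positions lie inside their diamonds (the constraint $|l|+|j|\le n-m+j-1$); I would handle this by taking $p$ large enough that all of the finitely many intermediate positions are interior, so that the telescoping is exact and the resulting identity of rational functions is forced. Finally, the horizontal reflection $\mathbb{P}(-l,m;n)=\mathbb{P}(l,m;n)$ disposes of $l<0$ and completes the reduction.
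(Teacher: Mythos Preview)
Your argument is essentially the paper's own proof: you use the creation rate as a vertical difference operator, invoke Corollary~\ref{Crgrowth} for each step, and absorb the shifts in the binomial factor into a rational function in $p$---the paper phrases this as a one-step induction on $|m|$, you phrase it as a telescoping sum, but the content is identical. One small slip: the vanishing condition for $\mathbb{P}(l,0;n-m)$ and the applicability condition for Corollary~\ref{Crgrowth} reduce to \emph{opposite} parities (namely $l+m\equiv n$ versus $l+m\equiv n-1$), not the same one as you wrote; your subsequent conclusion is nevertheless correct.
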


\begin{proof}
    Recalling the definition of the creation rate we have
    \[\Cr(l,m;n)= 2\big(\mathbb{P}(l,m;n)-\mathbb{P}(l,m-1;n-1)\big).\]
    In particular, for $m>0$ we have after substituting $n=4p+\alpha$:
    \[\mathbb{P}(l,m;4p+\alpha)=\mathbb{P}(l,m-1;4p+\alpha-1)+\frac{\Cr(l,m;4p+\alpha)}{2}.\]
    Now applying Corollary \ref{Crgrowth} we obtain
    \[\mathbb{P}(l,m;4p+\alpha)=\mathbb{P}(l,m-1;4p+\alpha-1)+2^{-4p-\alpha}\binom{2p-1}{p}^2f_{l,m,\alpha}(p),\]
    from which we derive that $\mathbb{P}(l,m;4p+\alpha)$ is of the form
    \[\frac{1}{4}+2^{-4p-\alpha}\binom{2p-1}{p}^2\hat{f}_{m,l,\alpha}(p)\]
    for some rational function $\hat{f}_{m,l,\alpha}(p)$ in $p$, if already $\mathbb{P}(l,m-1;4p+\alpha-1)$ fulfils this property. Note that we use here the fact that for any fixed $a,b,c\in \Z$ we have
    \[2^{-4p-\alpha +a}\binom{2p-1+b}{p+c}=2^{-4p-\alpha}\binom{2p-1}{p}g(p)\]
    for some rational function $g(p)$ in $p$. Moreover, if $h(p)$ and $g(p)$ are rational functions, then also $h(g(p))$ is rational in $p$. However, thus inductively the assertion is true for $\mathbb{P}(l,m;4p+\alpha)$ if it is true for $\mathbb{P}(l,0;4p+\alpha)$.

    For the case $m<0$ we see that the definition of the creation rate also yields
    \[\mathbb{P}(l,m;4p+\alpha)=\mathbb{P}(l,m+1;4p+\alpha+1)-\frac{\Cr(l,m+1;4p+\alpha+1)}{2}.\]
    The rest of the argument works analogously to the case before.
\end{proof}

The proof of Proposition \ref{horizon} tells us, that once Theorem~\ref{1/4} is proven for all residues $\alpha$ at position $(l,m)$, the statement also holds for all $\alpha\in\{0,1,2,3\}$ at position $(l,m+k)$ where $k\in \Z$. In particular it is enough to show Theorem \ref{1/4} for all positions $(l,0)$, $l\geq 0$ along the horizontal axis and all residues $\alpha\in\{0,1,2,3\}$. In a next step we show that it is actually enough to prove the theorem for $\mathbb{P}(0,0;4p+\alpha)$ with $\alpha\in\{1,3\}$.

\subsection{Reduction to the origin} \label{origin}

We want to show the following proposition.
\begin{prop}\label{horireduce}
    If we have  
    \[\mathbb{P}(0,0;4p+\alpha)=\frac{1}{4}+2^{-4p-\alpha}\binom{2p-1}{p}^2f_\alpha(p)\]
    for a rational function $f_\alpha(p)$ in $p$ if $\alpha\in\{1,3\}$ and $\mathbb{P}(0,0;4p+\alpha)=0$ if $\alpha\in\{0,2\}$ then also 
    \[\mathbb{P}(l,0;4p+\alpha)=\frac{1}{4}+2^{-4p-\alpha}\binom{2p-1}{p}^2f_{l,\alpha}(p)\]
    with $f_{l,\alpha}(p)$ a rational function in $p$ or $\mathbb{P}(l,0;4p+\alpha)=0$.
\end{prop}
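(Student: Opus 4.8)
The plan is to march the domino from the origin out to $(l,0)$ by an induction on $l$, transporting the shape $\tfrac14 + 2^{-4p-\alpha}\binom{2p-1}{p}^2 f(p)$ one unit at a time. The point to appreciate first is that a single creation rate will \emph{not} do this: a creation rate moves a horizontal domino perpendicular to its long axis (it changes $m$, not $l$), which is exactly what made the reduction to the axis in Proposition \ref{horizon} painless, whereas here I must move \emph{along} the axis. So the idea is to manufacture the parallel step out of perpendicular ones. I would introduce the difference $D(l;m,n):=\mathbb{P}(l,m;n)-\mathbb{P}(l-1,m;n-1)$, note that $l+m+n$ has the same parity in both terms, and subtract the two creation-rate identities $\mathbb{P}(l,m;n)=\mathbb{P}(l,m-1;n-1)+\tfrac12\Cr(l,m;n)$ and $\mathbb{P}(l-1,m;n-1)=\mathbb{P}(l-1,m-1;n-2)+\tfrac12\Cr(l-1,m;n-1)$ to obtain the recursion $D(l;m,n)=D(l;m-1,n-1)+\tfrac12\big(\Cr(l,m;n)-\Cr(l-1,m;n-1)\big)$.

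Iterating this downward in $m$ telescopes $D(l;0,n)$ into a finite sum of differences of creation rates plus a boundary term $D(l;-K,n-K)$. For $K$ slightly larger than $(n-l)/2$ the positions $(l,-K)$ and $(l-1,-K)$ both leave their diamonds — the threshold is the same for both — so $\mathbb{P}$ vanishes there and the boundary term drops out, giving $D(l;0,n)=\tfrac12\sum_{k\ge0}\big(\Cr(l,-k;n-k)-\Cr(l-1,-k;n-1-k)\big)$. Each summand is governed by Corollary \ref{Crgrowth}: writing $n=4p+\alpha$, the creation rate at size $4p+\alpha-k$ equals $2^{-4(p-q)-\beta+1}\binom{2(p-q)-1}{p-q}^2$ times a rational function, for a residue $\beta$ and a fixed integer $p$-shift $q=q(k)$. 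Using the two closure facts already isolated in the proof of Proposition \ref{horizon} — that a shifted binomial is $\binom{2p-1}{p}$ times a rational function, and that sums of rational functions are rational — every summand, hence $D(l;0,n)$ itself, acquires the form $2^{-4p-\alpha}\binom{2p-1}{p}^2$ times a rational function in $p$. In the complementary parity case ($l+\alpha$ even) a direct check of the congruence hypothesis in Corollary \ref{Crgrowth} shows every creation rate in the sum vanishes, so $D(l;0,n)=0$.

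With the increment $D(l;0,n)$ in hand the induction is immediate. From $\mathbb{P}(l,0;4p+\alpha)=\mathbb{P}(l-1,0;4p+\alpha-1)+D(l;0,4p+\alpha)$, if the statement holds at $(l-1,0)$ for size $4p+\alpha-1$, then adding a nice increment (and absorbing the $p$-shift coming from the smaller size via the same binomial-ratio closure fact) yields the statement at $(l,0)$ for size $4p+\alpha$. Stepping down $l$ times lands the base case at $\mathbb{P}(0,0;4p+\alpha-l)$; by the parity bookkeeping, $\mathbb{P}(l,0;4p+\alpha)$ is nonzero exactly when $l+\alpha$ is odd, in which case $\alpha-l$ is odd and the base residue lies in $\{1,3\}$, while when $l+\alpha$ is even the base residue lies in $\{0,2\}$. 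These are precisely the four cases supplied by the hypothesis (the nice shape for $\{1,3\}$ and vanishing for $\{0,2\}$), so the induction closes.

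The step I expect to be the main obstacle is the normalization bookkeeping in the second paragraph: the creation rates in the telescoped sum live in diamonds of \emph{different} sizes $4p+\alpha-k$, hence different residues and different $p$-shifts, and one must verify that they all collapse uniformly in $p$ onto the single universal prefactor $2^{-4p-\alpha}\binom{2p-1}{p}^2$ with a rational remainder. The only genuinely new input beyond Proposition \ref{horizon} is the geometric observation that the boundary term of the telescoping vanishes because both positions exit the diamond at the same threshold; everything else is the closure of the nice-form family under addition and fixed index shifts that was already exploited in the reduction to the axis.
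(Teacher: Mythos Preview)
Your telescoping idea is natural, but it has a genuine gap: the length of the sum is not fixed but grows with $p$. You need $K$ slightly larger than $(n-l)/2=(4p+\alpha-l)/2$, so the number of summands in
\[
D(l;0,4p+\alpha)=\tfrac12\sum_{k=0}^{K-1}\bigl(\Cr(l,-k;4p+\alpha-k)-\Cr(l-1,-k;4p+\alpha-1-k)\bigr)
\]
is roughly $2p$. The closure facts you invoke from the proof of Proposition~\ref{horizon} --- that $2^{-4(p-q)-\beta+1}\binom{2(p-q)-1}{p-q}^2$ is $2^{-4p-\alpha}\binom{2p-1}{p}^2$ times a rational function, and that sums of rational functions are rational --- only apply for \emph{fixed} shifts $q$ and \emph{fixed} finite sums. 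Here $q=q(k)$ ranges over roughly $p/2$ values as $k$ runs through the sum, so the ratio $\binom{2(p-q)-1}{p-q}\big/\binom{2p-1}{p}$ is a rational function whose degree grows without bound, and you are summing $O(p)$ such terms with $p$-dependent rational coefficients. Nothing in the toolkit guarantees the result is again of the form $2^{-4p-\alpha}\binom{2p-1}{p}^2\cdot(\text{rational in }p)$. The same objection applies to the base case of your induction: after $l$ steps you land at $\mathbb{P}(0,0;4p+\alpha-l)$, but along the way each $D$-increment already carries a $p$-dependent sum.

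The paper sidesteps this by never changing the diamond size during the horizontal step. Instead of moving $(l,0)\to(l-1,0)$ across sizes, it uses the probabilistic identity that the four orientations at a square sum to~$1$, and rewrites the west, north and south probabilities via the rotational symmetry of the diamond as $\mathbb{P}(l-1,-1;4p+\alpha)$, $\mathbb{P}(1,l-1;4p+\alpha)$ and $\mathbb{P}(0,-l;4p+\alpha)$, all in the \emph{same} diamond of size $4p+\alpha$. Each of these has first coordinate $0$, $1$ or $l-1$, so the induction hypothesis together with Proposition~\ref{horizon} (which only ever shifts $m$ by one, hence uses a single creation rate per step) gives the nice shape, and the $\tfrac14$'s combine as $1-\tfrac34=\tfrac14$. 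The key point you are missing is precisely this use of rotational symmetry to trade a horizontal displacement for a vertical one without touching~$n$.
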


\begin{proof}
    The decision whether $\mathbb{P}(l,0;4p+\alpha)=0$ depends only on the parity of $l$ and $\alpha$. Thus we only need to check whether such a formula in the presented shape holds when $\mathbb{P}(l,0;4p+\alpha)\neq 0$.
    We prove this via induction on $l\geq 0$. Hence, let us assume the assertion is true for all $\mathbb{P}(k,0;4p+\beta)$ with $k<l$ and $\beta\in\{0,1,2,3\}$. By Proposition \ref{horizon} we can therefore assume that the shape of the formula holds for all $\mathbb{P}(k,m;4p+\beta)$ with $m\in \Z$. 

    Now we use for the first time that we are actually working with probabilities. We are sampling domino tilings on the Aztec diamond uniformly at random. Every square in the Aztec diamond will be covered by a single domino, pairing said square with a neighbouring square of the other colour. This neighbour is located either to the \textit{west, east, south} or \textit{north} of the original square. Compare this with Figure~\ref{orient}. We denote by
    \[\mathbb{P}(x,y;\textit{ north})\]
    the probability that in the Aztec diamond of size $4p+\alpha$ the square with its bottom-right corner located at $(x,y)\in\Z^2$ is covered by an north-looking domino in our random tiling. Similarly, we denote by $\mathbb{P}(x,y; \textit{ west})$, $\mathbb{P}(x,y; \textit{ east})$ and $\mathbb{P}(x,y; \textit{ south})$ the probabilities for the other three cases.
    \begin{figure}
    \centering
    \includegraphics[width=0.3\textwidth]{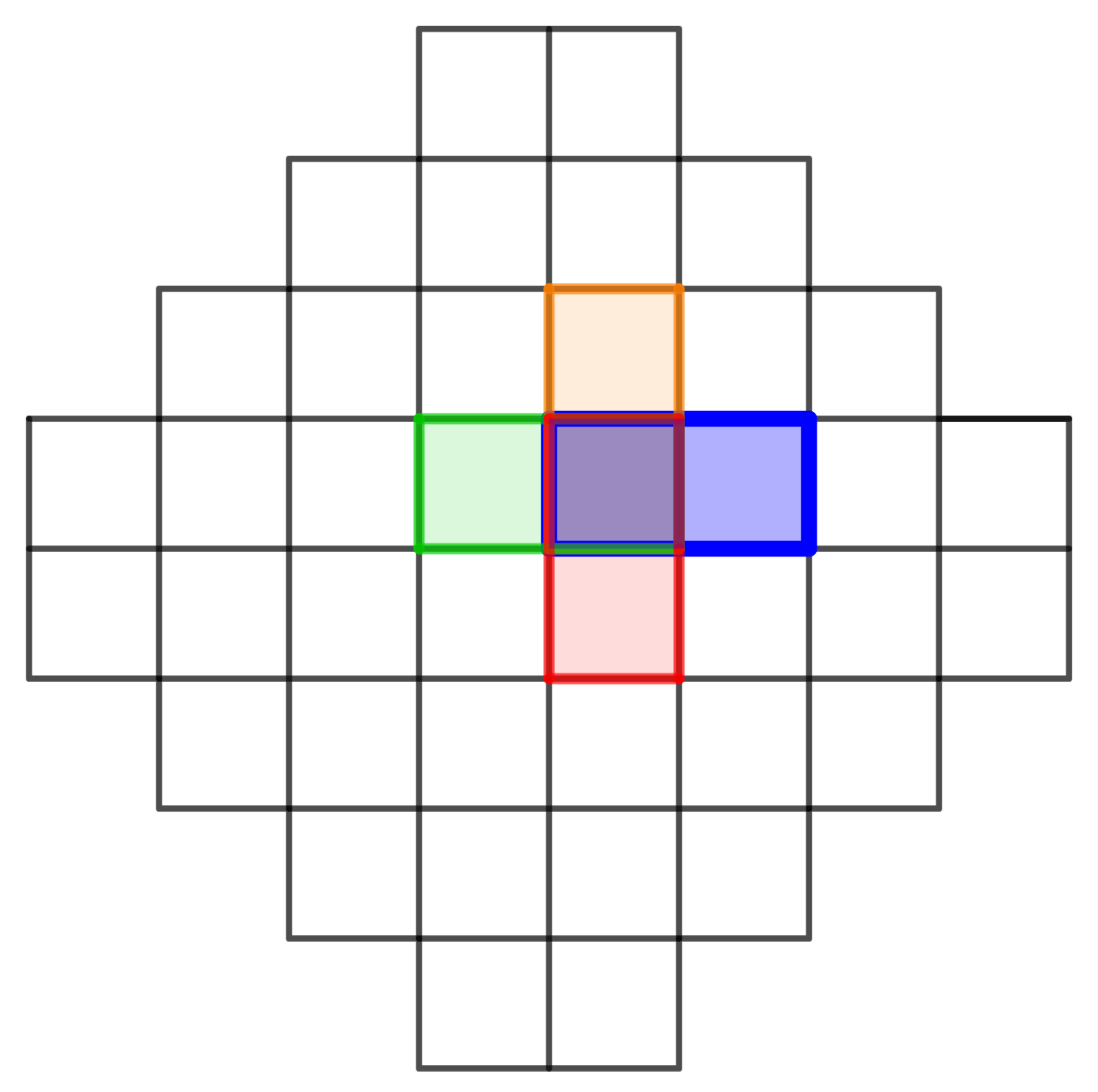}
    \caption{The four possibilities to place a domino tile on the upper-right central unit square. From the point of view of the covered unit square, the covering tile has to be oriented either \textit{east}, \textit{north}, \textit{west} or \textit{south}. By symmetries and our conventions of colouring we have $\mathbb{P}(1,0;4p+\alpha)=\mathbb{P}(\textit{east})=\mathbb{P}(\textit{north})$ and $\mathbb{P}(0,-1;4p+\alpha)=\mathbb{P}(\textit{west})=\mathbb{P}(\textit{south})$.}
    \label{orient}
    \end{figure}
    
    Thus we have
    \[1=\mathbb{P}(x,y; \textit{ west})+\mathbb{P}(x,y; \textit{ north})+\mathbb{P}(x,y; \textit{ east})+\mathbb{P}(x,y; \textit{ south})\]
    for every position $(x,y)\in\Z^2$. If $(l,m)$ indicates a black square in the Aztec diamond of size $4p+\alpha$, then we obtain, using our notation,
    \[\mathbb{P}(l,m;4p+\alpha)=\mathbb{P}(l,m; \textit{ east}).\]
    Therefore, we have 
    \[\mathbb{P}(l,0;4p+\alpha)= 1-\mathbb{P}(l,0; \textit{ west})-\mathbb{P}(l,0; \textit{ north})-\mathbb{P}(l,0; \textit{ south}).\]
    If we rewrite the probabilities on the right hand side into our original parametrisation by rotating the Aztec diamond in the right way and using its symmetry with respect to the vertical axis, we see that
    \begin{align}
        \mathbb{P}(l,0; \textit{ west})&=\mathbb{P}(l-1,-1;4p+\alpha) \notag \\
        \mathbb{P}(l,0; \textit{ north})&=\mathbb{P}(1,l-1;4p+\alpha) \notag \\
        \mathbb{P}(l,0; \textit{ south})&=\mathbb{P}(0,-l; 4p+\alpha). \notag
    \end{align}
    First of all, notice that none of these probabilities equals $0$. Moreover we have 
    \[\mathbb{P}(0,-l; 4p+\alpha)=\frac{1}{4}+2^{-4p-\alpha}\binom{2p-1}{p}^2f_{\textit{south},\alpha}(p)\]
    since such a formula is assumed for $\mathbb{P}(0,0;4p+\alpha)$ and we have Proposition \ref{horizon}. Furthermore, for the same reason, $\mathbb{P}(1,l-1;4p+\alpha)$ is of such a shape if $\mathbb{P}(1,0;4p+\alpha)$ is. However, using the same trick as before and the symmetries of the Aztec diamond we have
    \begin{align}
        1&= 2 \mathbb{P}(1,0;4p+\alpha)+2 \mathbb{P}(0,-1;4p+\alpha) \label{1reflex} \\
        \Leftrightarrow \mathbb{P}(1,0;4p+\alpha)&=\frac{1}{2}\Big(1-2\mathbb{P}(0,-1;4p+\alpha)\Big) \notag \\
        &= \frac{1}{2}\Big(1-2\Big(\frac{1}{4}+2^{-4p-\alpha}\binom{2p-1}{p}^2f_{0,-1,\alpha}(p)\Big)\Big) \notag \\
        &=\frac{1}{4}+2^{-4p-\alpha}\binom{2p-1}{p}^2f_{0,-1,\alpha}(p). \notag
    \end{align}
    Therefore we conclude
    \[\mathbb{P}(1,l-1; 4p+\alpha)=\frac{1}{4}+2^{-4p-\alpha}\binom{2p-1}{p}^2f_{\textit{north},\alpha}(p).\]
    Finally, $\mathbb{P}(l-1,-1;4p+\alpha)$ is of the desired form if $\mathbb{P}(l-1,0;4p+\alpha)$ is. However, at this point we might use induction to obtain
    \[\mathbb{P}(l-1,-1;4p+\alpha)=\frac{1}{4}+2^{-4p-\alpha}\binom{2p-1}{p}^2f_{\textit{west},\alpha}(p).\]
    Finally, we insert all of this in our expression for $\mathbb{P}(l,0;4p+\alpha)$ to obtain
    \begin{align}
        \mathbb{P}(l,0;4p+\alpha)&=1-\Big(\frac{3}{4}+2^{-4p-\alpha}\binom{2p-1}{p}^2\Big(f_{\textit{west},\alpha}(p)+f_{\textit{north},\alpha}(p)+f_{\textit{south},\alpha}(p) \Big)\Big) \notag\\
        &=\frac{1}{4}+2^{-4p-\alpha}\binom{2p-1}{p}^2f_{l,\alpha}(p), \notag
    \end{align}
    with $f_{l,\alpha}(p)=-f_{\textit{west},\alpha}(p)-f_{\textit{north},\alpha}(p)-f_{\textit{south},\alpha}(p)$ still being a rational function in $p$.
\end{proof}

\subsection{The case of the origin} \label{final}
During the last two subsections we were able to reduce the proof of Theorem \ref{1/4} to checking whether it holds at the origin. First of all, due to parity issues, we have
\[\mathbb{P}(0,0;4p+\alpha)=0\]
if $\alpha=0$ or $2$. This is clear, since in these cases the left square of a horizontal domino tile located at $(0,0)$ would be white. Hence, we only need to find rational functions $f_1(p)$ and $f_3(p)$ such that 
\[\mathbb{P}(0,0;4p+\alpha)=\frac{1}{4}+2^{-4p-\alpha}\binom{2p-1}{p}^2f_\alpha(p)\]
with $\alpha\in\{1,3\}$. Now, simulations have suggested the following.
\begin{align}
    \mathbb{P}(0,0;4p+1)&=\frac{1}{4}+2^{-4p}\binom{2p-1}{p}^2, \notag\\
    \mathbb{P}(0,0;4p+3)&=\frac{1}{4}. \notag
\end{align}
I.e., we have $f_1(p)=2$ and $f_3(p)=0$.

We start with the $n=4p+3$ case, since this seems to be the simpler one. To show that $\mathbb{P}(0,0;4p+3)$ is constantly $1/4$ we will need a result from general perfect matching enumeration namely the so-called \textit{Kuo condensation}.
\begin{defin}
    Let $G=(V,E)$ be a bipartite planar graph with set of vertices $V$ and set of edges $E$. Assume $G$ has as many black as white vertices. A \textit{perfect matching} of $G$ is a subset of edges $\mu\subseteq E$ such that every vertex is contained in exactly one edge in $\mu$.
\end{defin}

\begin{rem}
    It is well known that the number of domino tilings of a domino region is in 1-to-1 correspondence with the perfect matchings of the inner dual graph (where we remove the vertex associated to the unbound face). 
\end{rem}

Every domino tile in a random tiling corresponds to an edge in the perfect matching of the dual graph. Therefore, the probability of observing a horizontal domino equals the probability to find the according edge in a random perfect matching. Let $e=\{a,b\}$ be such an edge, where $a$ and $b$ are vertices. Then the number of perfect matchings, which contain the edge $e$ equals the number of perfect matchings of the subgraph $G\bs\{a,b\}$. Kuo condensation relates the perfect matching numbers of such subgraphs.

\begin{thm}[{\cite[Prop.~1.1]{Kuo}}] \label{Kuo}
    Let $G=(V,E)$ be a planar graph and let $a,b,c,d\in V$ be four vertices appearing in cyclic order in a face of $G$. Denote by $M(H)$ the number of perfect matchings of a graph $H$. Then we have
    \begin{align}
        M(G)M(G\bs\{a,b,c,d\})=&M(G\bs\{a,b\})M(G\bs\{c,d\})-M(G\bs\{a,c\})M(G\bs\{b,d\})\notag \\
        &+M(G\bs\{a,d\})M(G\bs\{b,c\}),\notag
    \end{align}
    where $G\bs\{v,v'\}$ means the graph obtained from $G$ by deleting the vertices $v,v'\in V$.
\end{thm}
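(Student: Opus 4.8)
The plan is to give a bijective proof. Writing $\M(H)$ for the set of perfect matchings of $H$, so that $M(H)=|\M(H)|$, I first move the negative term across to obtain an identity between nonnegative integers,
\begin{align*}
&|\M(G)|\,|\M(G\bs\{a,b,c,d\})|+|\M(G\bs\{a,c\})|\,|\M(G\bs\{b,d\})|\\
&\qquad=|\M(G\bs\{a,b\})|\,|\M(G\bs\{c,d\})|+|\M(G\bs\{a,d\})|\,|\M(G\bs\{b,c\})|,
\end{align*}
and I would then construct an explicit bijection between the two disjoint unions of product sets. The tool is superposition: given a pair of matchings drawn from any one of these four products, I overlay them and read off the multiset union of their edges. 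In each of the four cases exactly the vertices $a,b,c,d$ are covered by precisely one of the two matchings, while every remaining vertex is covered by both or by neither. Hence the superposition is a disjoint collection of doubled edges, (even) cycles, and exactly two vertex-disjoint paths whose combined set of four endpoints is $\{a,b,c,d\}$.

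The crucial observation --- and the only place the hypothesis is used --- is that these two paths can join the four special vertices in only two of the three conceivable ways. In principle the endpoints could be paired as $\{a,b\}\cup\{c,d\}$, as $\{a,d\}\cup\{b,c\}$, or as the interleaving $\{a,c\}\cup\{b,d\}$. I would rule out the last by planarity: after adjoining to the embedding a single new vertex placed in the common face and joined to each of $a,b,c,d$ (which keeps the graph planar), a path from $a$ to $c$ closes into a cycle that separates $b$ from $d$, because $a,b,c,d$ occur in this cyclic order on that face; a path from $b$ to $d$ disjoint from the first would then be forced to cross it, which vertex-disjoint paths in a plane graph cannot do. Consequently every superposition is of one of exactly two types, which is the structure needed to match the two sides.

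To finish, I would argue that for each superposition $U$ the decompositions of $U$ as an ordered union of two matchings realizing one of the two left-hand missing-vertex patterns are equinumerous with those realizing a right-hand pattern. The assignment on the cycles and doubled edges is common to both sides and irrelevant; the endpoint data lives entirely on the two special paths, and flipping along one path which of the two matchings owns each of its edges toggles the uncovered endpoints from a left-hand pattern to a right-hand one while preserving $U$. Carrying this out for each type and summing over all $U$ yields the identity. The step I expect to be the genuine obstacle is exactly this bookkeeping: one must verify that along each path the forced alternation of the two matchings is consistent with the prescribed uncovered endpoints --- a parity condition that is automatic for the bipartite graphs relevant here --- and that the flip indeed produces a perfect matching on each side. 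An alternative that relocates rather than removes the difficulty is the Pfaffian route: as $G$ is planar it carries a Kasteleyn orientation, so $M(G\bs S)=|\Pf(A_S)|$ for the induced skew-symmetric adjacency matrices $A_S$, and the claim becomes the Jacobi identity for Pfaffians applied to the indices $a,b,c,d$, where the whole difficulty is concentrated in reconciling the Pfaffian signs --- again controlled by the cyclic order of $a,b,c,d$ on a face.
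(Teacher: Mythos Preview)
The paper does not prove this theorem at all: it is quoted from Kuo's article and invoked as a ready-made tool in the later arguments. There is therefore no in-paper proof to compare your attempt against.

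For what it is worth, your superposition argument is exactly Kuo's original approach, and the outline is correct. The parity concern you flag is not an obstruction even outside the bipartite setting. For a superposition whose two special paths join $\{a,b\}$ and $\{c,d\}$, the parities of the two path lengths simply determine from which of the four products the superposition can arise: both odd matches $\M(G)\times\M(G\bs\{a,b,c,d\})$ on the left with $\M(G\bs\{a,b\})\times\M(G\bs\{c,d\})$ on the right; both even matches the remaining two products; mixed parities arise from neither side and hence contribute $0=0$. In every subcase the multiplicities --- $2$ to the number of cycles, with the colouring along each path forced by its endpoints --- coincide. The analogous case analysis handles the $\{a,d\},\{b,c\}$ type, so the bijection goes through in full generality; bipartiteness only makes one of the parity subcases vacuous. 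Your Pfaffian alternative is also a legitimate route.
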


In the context of a domino region like the Aztec diamond, the deletion of a vertex refers to the removal of a square. Theorem \ref{Kuo} will be be the essential ingredient to validate the conjectured formulas for $\mathbb{P}(0,0;4p+1)$ and $\mathbb{P}(0,0;4p+3)$.
 In particular, going back to the setting of Aztec diamonds, we are almost ready to prove the probability in the $4p+3$-case. All we need is a related result of Mihai Ciucu, counting domino tilings of the Aztec diamond with a central $2\times 2$-square hole. 

\begin{thm}[{\cite[Thm.~3.1]{Ciucu2}}] \label{reflect}
    The \textit{holey} Aztec diamonds of order $4p+2$ and $4p+3$ have $2^{8p^2+10p}$ and $2^{8p^2+14p+3}$ domino tilings, respectively. 
\end{thm}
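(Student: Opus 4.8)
The plan is to exploit the reflective symmetry of the holey region and apply Ciucu's matchings factorization theorem for bipartite planar graphs with an axis of symmetry. First I would pass to the dual graph whose vertices are the unit squares; write $G$ for the Aztec diamond of size $n$ and $H=G\setminus\{a,b,c,d\}$ for the holey diamond, where $a,b,c,d$ are the four squares of the central $2\times 2$ block around the origin (a lattice point for every $n$). Since the hole sits symmetrically about the vertical axis $\ell$ through the centre, $H$ is a planar bipartite graph symmetric about $\ell$, and $\ell$ meets $H$ by \emph{crossing} a sequence of edges rather than passing through vertices; this places $H$ in the edge-crossing case of the factorization theorem, and the hole itself is neatly bisected by the cut.

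Applying the factorization theorem, I would cut $H$ along $\ell$, halve the weights of the edges meeting the axis as prescribed, and obtain two weighted half-graphs $H^+,H^-$ together with an identity of the schematic form $M(H)=2^{c}M(H^+)M(H^-)$, where the exponent $c$ is read off from the number of axis-crossing edges. The value of this step is twofold: it already extracts an explicit power of two, and it replaces a count over a region of quadratic size by a product of counts over regions of roughly half the size.

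It then remains to evaluate $M(H^+)$ and $M(H^-)$. I expect these to be weighted staircase regions of Aztec type carrying the notch left by the bisected hole, and I would argue that their matching generating functions again collapse to powers of two, either by a second application of the factorization theorem or by a short induction on $p$ anchored in the known count $M(\mathrm{Aztec}_n)=2^{n(n+1)/2}$. Treating the two residues $n=4p+2$ and $n=4p+3$ separately and collecting the resulting exponents together with $c$ should reproduce exactly $8p^2+10p$ and $8p^2+14p+3$.

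The hard part will be the bookkeeping around the cut: fixing coordinates so that the hole is genuinely symmetric about $\ell$, carrying the fractional edge weights correctly through the factorization, and recognising the weighted half-regions precisely enough that their matching numbers admit a closed form. As an independent cross-check, and as the bridge to how this statement is used later, Kuo condensation (Theorem~\ref{Kuo}) applied to $a,b,c,d$ in cyclic order gives, after noting that $M(G\setminus\{a,c\})=M(G\setminus\{b,d\})=0$ since deleting two like-coloured squares unbalances the bipartition, the relation $M(G)M(H)=M(G\setminus\{a,b\})M(G\setminus\{c,d\})+M(G\setminus\{a,d\})M(G\setminus\{b,c\})$. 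Dividing by $M(G)^2$ rewrites the claim as $M(H)/M(G)=p_{ab}p_{cd}+p_{ad}p_{bc}$ in terms of the four central placement probabilities, which is consistent with the target ratio $M(H)/M(G)=1/8$; I would use this only as a sanity check, since substituting the value $1/4$ for those probabilities would be circular with the later application of the theorem.
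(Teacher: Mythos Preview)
The paper does not prove this statement at all; it is quoted from Ciucu's article and used as a black box. The only hint the paper gives is the sentence ``Ciucu proves his result applying his \textit{Matching Factorization Theorem} using one of the several symmetries of the Aztec diamond,'' which is exactly the route you propose. So at the level of strategy your plan coincides with Ciucu's original argument, and there is nothing to compare against within this paper.

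That said, your sketch stops short of the one substantive step: after the cut you assert that the weighted half-regions $H^{+},H^{-}$ ``again collapse to powers of two, either by a second application of the factorization theorem or by a short induction on $p$,'' without identifying those regions or indicating why a closed form exists. This is precisely where the work lies in Ciucu's proof, and it is not automatic; the half-regions have to be recognised as regions whose weighted matching numbers are already known (or reducible to such). Until that identification is carried out, the argument is a plan rather than a proof.

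Your closing remark about Kuo condensation is accurate, including the circularity caveat: in this paper the implication runs the other way, as Proposition~\ref{Case3} (that $\mathbb{P}(0,0;4p+3)=1/4$) is \emph{deduced from} the present theorem via exactly the Kuo identity you wrote down. So the consistency check is genuine but cannot be upgraded to an independent proof without an external source for the central placement probabilities.
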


Ciucu proves his result applying his \textit{Matching Factorization Theorem} using one of the several symmetries of the Aztec diamond. We will only need the second of his results to prove the following proposition.

\begin{prop}[The $4p+3$-case] \label{Case3}
    We have for all $p\in \N$:
    \[\mathbb{P}(0,0;4p+3)=\frac{1}{4}.\]
\end{prop}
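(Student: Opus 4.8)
The plan is to express $\mathbb{P}(0,0;4p+3)$ as a ratio of perfect-matching counts and show this ratio is exactly $1/4$ using Kuo condensation together with Ciucu's enumeration (Theorem~\ref{reflect}). Let $n=4p+3$ and let $G$ be the inner dual graph of the Aztec diamond of size $n$. By the correspondence between horizontal dominoes and edges of a perfect matching, $\mathbb{P}(0,0;n)$ equals $M(G\bs\{a,b\})/M(G)$, where $\{a,b\}$ is the edge corresponding to the black-left horizontal domino at the origin, i.e. $a$ and $b$ are the two central squares straddling the origin. The denominator $M(G)=2^{(n+1)n/2}$ is known. So the whole task is to compute the numerator $M(G\bs\{a,b\})$, the number of tilings with that central horizontal domino forced in place.

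The key step is to set up Kuo condensation so that the forced-domino count appears. First I would identify four squares $a,b,c,d$ lying in cyclic order on a face surrounding the origin (the natural choice being the four central unit squares forming the central $2\times2$ block), and apply Theorem~\ref{Kuo}. The term $M(G\bs\{a,b,c,d\})$ is precisely the number of tilings of the Aztec diamond of size $4p+3$ with the central $2\times2$ hole removed, which Ciucu's theorem gives as $2^{8p^2+14p+3}$. The three product terms $M(G\bs\{a,b\})M(G\bs\{c,d\})$, $M(G\bs\{a,c\})M(G\bs\{b,d\})$, $M(G\bs\{a,d\})M(G\bs\{b,c\})$ split into pairings that are either horizontal-domino removals or vertical-domino removals at the center. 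By the symmetry of the Aztec diamond (reflection across the diagonals swaps horizontal and vertical central dominoes, and swaps the two forced-domino configurations), all the relevant two-vertex-deletion counts coincide up to the black/white colour conventions, so each term reduces to a multiple of the single unknown $N:=M(G\bs\{a,b\})$ and of the full count $M(G)$.

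Substituting everything into the Kuo identity then yields a single scalar equation relating $N$, $M(G)=2^{(4p+3)(4p+4)/2}=2^{8p^2+14p+6}$, and the hole count $2^{8p^2+14p+3}$. Solving this linear-in-$N$ equation should give $N=M(G)/4$, whence $\mathbb{P}(0,0;4p+3)=N/M(G)=1/4$. Concretely I expect the identity to collapse to something like $M(G)\cdot 2^{8p^2+14p+3}=2N^2-(\text{vertical-term})^2+\cdots$, and after using the symmetry identifications among the pairings the squared unknowns combine to pin down $N$ exactly.

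The main obstacle will be the careful bookkeeping in the middle step: one must choose the four face-vertices $a,b,c,d$ so that the six two-vertex deletions sort cleanly into the central-horizontal-domino count $N$ and the central-vertical-domino count, verify via the diamond's reflection symmetries that these two are equal (or differ by a controlled factor dictated by the black/white colouring), and confirm that the ``mixed'' deletions $M(G\bs\{a,c\})$ etc. either vanish by parity or again reduce to $N$. Getting the colour conventions consistent — recalling that we fixed the top-left square black, and tracking which of the forced dominoes has a black left square — is where an error is most likely to creep in, so I would double-check it against a small case (e.g. $p=0$, the size-$3$ diamond) before trusting the general manipulation.
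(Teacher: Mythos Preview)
Your approach is essentially identical to the paper's: apply Kuo condensation (Theorem~\ref{Kuo}) to the four central unit squares $a,b,c,d$, use bipartiteness to kill the diagonal term $M(G\bs\{a,c\})M(G\bs\{b,d\})$ (since $a$ and $c$ have the same colour), use rotational symmetry to identify all four remaining two-vertex deletions with the single unknown $N=M(G)\cdot X$, and plug in Ciucu's count $2^{8p^2+14p+3}$ for the central-hole region. One small slip: the resulting equation is not linear in $N$ but quadratic, namely $M(G)\cdot 2^{8p^2+14p+3}=2N^2$, equivalently $X^2=2^{-4}$; you then take the positive root $X=1/4$ since $X$ is a probability.
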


\begin{proof}
    Set $X:=\mathbb{P}(0,0;4p+3)$. Now consider the number of domino tilings of the holey Aztec diamond, where the four central squares were removed. Label those central squares by $a,b,c,d$ in clockwise order. We apply Theorem~\ref{Kuo} to the Aztec diamond $\mathcal{A}$ of size $4p+3$ and the four central squares $a,b,c,d$. As the Aztec diamond corresponds to a bipartite graph, the Kuo condensation formula reduces to
    \begin{align}
        M(\mathcal{A})M(\mathcal{A}\bs\{a,b,c,d\})=M(\mathcal{A}\bs\{a,b\})M(\mathcal{A}\bs\{c,d\})+M(\mathcal{A}\bs\{a,d\})M(\mathcal{A}\bs\{b,c\}), \notag
    \end{align}
    since, according to our labelling both $a$ and $c$ are squares of the same colour and therefore $M(\mathcal{A}\bs\{a,c\})=0$.
    
    The (full) Aztec diamond of size $4p+3$ has $M(\mathcal{A})=2^{(4p+4)(4p+3)/2}$ tilings. By Theorem~\ref{reflect} the holey Aztec diamond of size $4p+3$ has $M(\mathcal{A}\bs\{a,b,c,d\})=2^{8p^2+14p+3}$ tilings. Now by rotational symmetry, the regions $\mathcal{A}\bs\{a,b\}, \mathcal{A}\bs\{c,d\}$ etc. all correspond to the Aztec diamond where a horizontal tile at position $(0,0)$ was fixed. Thus we have
    \[M(\mathcal{A}\bs\{a,b\})=M(\mathcal{A}\bs\{c,d\})=M(\mathcal{A}\bs\{a,d\})=M(\mathcal{A}\bs\{b,c\})=2^{(4p+4)(4p+3)/2}\cdot X.\]
    Inserting all these numbers into the Kuo condensation equation above, we compute
    \begin{align}
        2^{(4p+4)(4p+3)/2}\cdot2^{8p^2+14p+3}&= 2\cdot(2^{(4p+4)(4p+3)/2}\cdot X)^2 \notag\\
        2^{16p^2+28p+9}&=2^{16p^2+28p+13}\cdot X^2 \notag\\
        2^{-4}&=X^2, \notag
    \end{align}
    and thus $\mathbb{P}(0,0;4p+3)=X=1/4$.
\end{proof}

It remains to show the formula for $\mathbb{P}(0,0;4p+1)$. To do so, we will again apply Kuo condensation, Theorem~\ref{Kuo}, but now we \textit{condensate from the outside}. What exactly we mean by that will become clear in the proof of the following proposition, which is moreover the final result needed to prove Theorem \ref{1/4}.

\begin{prop}[The $4p+1$-case)] \label{Case1}
    We have for all $p\in \N$:
    \[\mathbb{P}(0,0;4p+1)=\frac{1}{4}+2^{-4p}\binom{2p-1}{p}^2.\]
\end{prop}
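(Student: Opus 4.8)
The plan is to mimic the structure of the $4p+3$-case (Proposition~\ref{Case3}) but to apply Kuo condensation to an enlarged region rather than to the four central squares of $\mathcal{A}$ itself. Set $Y:=\mathbb{P}(0,0;4p+1)$ and write $\mathcal{A}$ for the Aztec diamond of size $4p+1$. The quantity $M(\mathcal{A}\bs\{a,b\})$, where $\{a,b\}$ is the edge corresponding to a fixed horizontal domino at the origin, equals $M(\mathcal{A})\cdot Y$, so if I can express this number through a Kuo identity whose other terms are known, I can solve for $Y$. The phrase ``condensate from the outside'' suggests that I should embed $\mathcal{A}$ of size $4p+1$ into the diamond $\mathcal{A}'$ of size $4p+2$ (or $4p+3$), and choose the four vertices $a,b,c,d$ of Theorem~\ref{Kuo} to lie on the \emph{boundary} of the larger diamond in such a way that deleting them (or pairs of them) peels the larger diamond down to copies of $\mathcal{A}$, to the holey diamond of size $4p+2$, and to the full diamond of size $4p$. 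The EKLP count $2^{(n+1)n/2}$ and Ciucu's Theorem~\ref{reflect} then supply all the matching numbers except the one involving $Y$.

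Concretely, I would proceed as follows. First I fix the larger graph $G$ and the cyclically ordered boundary vertices $a,b,c,d$, and I identify geometrically which subregion each of $G\bs\{a,b\}$, $G\bs\{c,d\}$, $G\bs\{a,c\}$, $G\bs\{b,d\}$, $G\bs\{a,d\}$, $G\bs\{b,c\}$, and $G\bs\{a,b,c,d\}$ corresponds to. Because $G$ is bipartite, any pair of same-coloured deleted vertices forces that term to vanish, exactly as in Proposition~\ref{Case3}; I choose the labelling so that the surviving products are precisely those I can evaluate. Second, I substitute the known counts: the size-$n$ EKLP formula $2^{(n+1)n/2}$ for the full diamonds of sizes $4p$, $4p+1$, $4p+2$, and Ciucu's $2^{8p^2+10p}$ for the holey diamond of size $4p+2$. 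Third, whichever term carries the fixed central domino contributes a factor $2^{(4p+2)(4p+1)/2}\cdot Y$ (or the analogous size), and solving the resulting equation yields $Y$.

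The main obstacle I anticipate is \emph{bookkeeping rather than difficulty}: I must choose the external four squares so that the deletions genuinely produce a full diamond of one size, a holey diamond of another, and a copy of $\mathcal{A}$ with a pinned origin domino --- and not some irregular region with no known count. Getting the geometry of the ``outside'' condensation right, and tracking the two-colouring so that exactly one of the two cross-terms survives, is the delicate step. Once the regions are correctly identified, the computation is purely a matter of adding exponents of $2$, and the appearance of $\binom{2p-1}{p}^2$ must emerge from rewriting the final power of $2$ together with the normalisation; I would double-check against the simulation values $f_1(p)=2$ to confirm the identity
\[
\mathbb{P}(0,0;4p+1)=\frac{1}{4}+2^{-4p}\binom{2p-1}{p}^2.
\]
One caveat is that a pure power-of-$2$ identity cannot by itself produce the binomial factor, so I expect the correct ``outside'' condensation to involve a region whose tiling count is \emph{not} a pure power of two --- plausibly the holey diamond of size $4p+1$ or a singly-defective diamond --- and identifying that region is where the real content of the proof lies.
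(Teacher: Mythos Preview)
Your high-level instinct --- Kuo condensation ``from the outside'' --- matches the paper, but the concrete setup you sketch does not work, and the missing ingredient is exactly the one you flag in your last paragraph.

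First, the base region in the paper is not the full diamond of size $4p+1$ embedded in a larger diamond. It is the diamond of size $4p+1$ \emph{with the central horizontal domino already removed}, to which four extra squares $a,b,c,d$ are glued at the corners. With this choice, deleting all four added squares gives back the holey $4p+1$ diamond, whose count is $2^{(4p+2)(4p+1)/2}X$; deleting none forces two layers of boundary dominoes and produces the holey diamond of size $4p-1$, whose count is known from Proposition~\ref{Case3}. Deleting opposite pairs forces one layer and leaves diamonds of size $4p$ with a domino hole at $(0,\pm 1)$ or $(\pm 1,0)$. None of these intermediate regions is a full diamond or a centred holey diamond of size $4p+2$, so neither the EKLP power of $2$ nor Ciucu's $2^{8p^2+10p}$ can be plugged in directly as you propose.

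Second, and this is the substantive gap: the binomial factor $\binom{2p-1}{p}^2$ does not come from any region with a previously known closed-form count. It enters because the off-centre placement probabilities $\mathbb{P}(0,\pm 1;4p)$ and $\mathbb{P}(\pm 1,0;4p)$ are evaluated via the creation rate identity $\mathbb{P}(l,m;n)=\mathbb{P}(l,m-1;n-1)+\tfrac{1}{2}\Cr(l,m;n)$ together with Lemma~\ref{Creaform}, Lemma~\ref{b-1} and the Growth Theorem for Kravchuk polynomials. Those lemmas produce the factor $(-1)^p\binom{2p-1}{p}$, and after squaring and combining, the binomial survives in the Kuo identity. Without invoking the Kravchuk/creation-rate machinery you have no source for $\binom{2p-1}{p}$, which is why your ``pure powers of $2$'' approach stalls.

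Finally, because $X$ appears in $M(\mathcal{A}^+\bs\{a,b,c,d\})$, in $M(\mathcal{A}^+\bs\{a,b\})$ (through $\mathbb{P}(0,-1;4p)$), and in $M(\mathcal{A}^+\bs\{a,d\})=M(\mathcal{A}^+\bs\{b,c\})$, the condensation identity is \emph{quadratic} in $X$, not linear. One then verifies that $\tfrac{1}{4}+2^{-4p}\binom{2p-1}{p}^2$ is a root and rules out the other root on the grounds that it exceeds $\tfrac{1}{2}$, which rotational symmetry forbids. Your plan to ``solve the resulting equation'' as if it were linear would therefore also need to be revised.
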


\begin{proof}
    Let $\mathcal{A}^+$ be the Aztec diamond of size $4p+1$ with $p\in \N$, with a horizontal domino shaped hole at position $(0,0)$ and with squares denoted by $a,b,c,d$ added in second position in clockwise order at the corners as its shown in Figure \ref{A+}.
    \begin{figure}[ht] 
		\centering
		\includegraphics[width=0.55\textwidth]{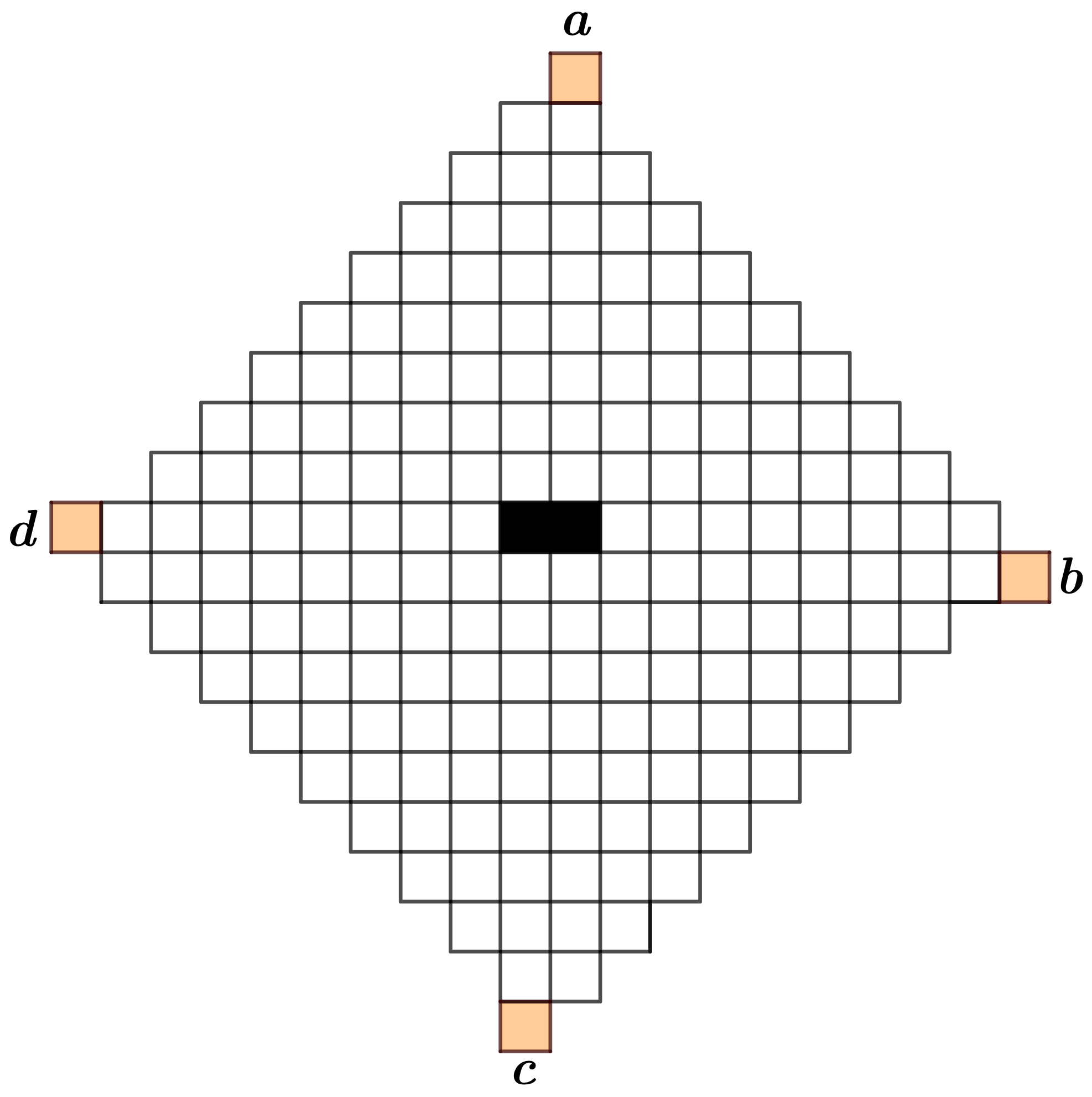}
		\caption{The domino domain $\mathcal{A}^+$: an Aztec diamond of size $4p+1$ (here $p=2$) with a horizontal domino shaped hole at position $(0,0)$ and additional squares $a,b,c,d$ in particular positions of the corners. (Always added at second position at each tip, when read clockwise.}
		\label{A+}
	\end{figure}
    
    Applying Kuo condensation with respect to the additional squares we get
     \begin{align}
        M(\mathcal{A}^+)M(\mathcal{A}^+\bs\{a,b,c,d\})=&M(\mathcal{A}^+\bs\{a,b\})M(\mathcal{A}^+\bs\{c,d\})\notag \\&-M(\mathcal{A}^+\bs\{a,c\})M(\mathcal{A}^+\bs\{b,d\})\notag \\
        &+M(\mathcal{A}^+\bs\{a,d\})M(\mathcal{A}^+\bs\{b,c\}).\notag
    \end{align}
    Since $\mathcal{A}^+$ is bipartite and the squares $a$ and $c$ are from the same colour, we have $M(\mathcal{A}^+\bs\{a,c\})=0$ and thus arrive at the following equation:
   \begin{align}
       0=&- M(\mathcal{A}^+)M(\mathcal{A}^+\bs\{a,b,c,d\}) &(I)\notag\\
       &+M(\mathcal{A}^+\bs\{a,b\})M(\mathcal{A}^+\bs\{c,d\}) &(II) \notag \\
       &+M(\mathcal{A}^+\bs\{a,d\})M(\mathcal{A}^+\bs\{b,c\}).\notag &(III) 
   \end{align}
   
   Our goal is now to replace all this perfect matching terms by explicit formulas in terms of placement probabilities. 

    \begin{itemize}
        \item  For $M(\mathcal{A}^+)$ we have by the Aztec Diamond Theorem and the observations described in Figure \ref{A+abcd}:
   \begin{align}
       M(\mathcal{A}^+)=2^{2p(4p-1)}\mathbb{P}(0,0;4(p-1)+3)=2^{8p^2-2p-2} \label{Erst}
   \end{align}
   since $\mathbb{P}(0,0;4(p-1)+3)=1/4$ by Proposition \ref{Case3}.
  
   \item The value of $M(\mathcal{A}^+\bs\{a,b,c,d\})$ is simply given by 
   \[2^{(2p+1)(4p+1)}\mathbb{P}(0,0;4p+1)\]
   since $\mathcal{A}^+\bs\{a,b,c,d\}$ just is an Aztec diamond of size $4p+1$ with a horizontal domino shaped hole at position $(0,0)$. (Compare with Figure \ref{A+}.) Since $\mathbb{P}(0,0;4p+1)$ is the entity we are looking for, we set $X:=\mathbb{P}(0,0;4p+1)$ to simplify the upcoming equations. Thus we have
   \begin{align}
       M(\mathcal{A}^+\bs\{a,b,c,d\})=2^{8p^2+6p+1}X.
   \end{align}

   \item For $M(\mathcal{A}^+\bs\{a,b\})$ we use the observations of Figure \ref{A+ab} to compute
   \begin{align}
       M(\mathcal{A}^+\bs\{a,b\})&= 2^{2p(4p+1)}\mathbb{P}(0,-1;4p) \notag \\
        &=2^{8p^2+2p}\Bigg(\mathbb{P}(0,0;4p+1)-\dfrac{\Cr(0,0;4p+1)}{2}\Bigg) \notag \\
        &=2^{8p^2+2p}\Big(X-2^{-4p-1}\Krav(2p,2p,4p)^2\Big) \notag
    \end{align}
    where we first used the relation given in the definition of the creation rates and afterwards applied Lemma~\ref{Creaform} to express this in terms of Kravchuk polynomials. However, by Theorem~\ref{growing1} the computation above continues to
    \begin{align}   
      M(\mathcal{A}^+\bs\{a,b\})&=2^{8p^2+2p}X-2^{8p^2-2p-1}\binom{2p-1}{p}^2 g_{0,0,1}(p) \notag \\
         &=2^{8p^2+2p}X-2^{8p^2-2p+1}\binom{2p-1}{p}^2,
    \end{align}
    where $g_{0,0,1}(p)$ was read off Table\ref{tab:1}.

    \item Similarly, for $M(\mathcal{A}^+\bs\{c,d\})$ we have by Figure \ref{A+cd}:
    \begin{align}
        M(\mathcal{A}^+\bs\{c,d\})&= 2^{2p(4p+1)}\mathbb{P}(0,1;4p) \notag \\
        &=2^{8p^2+2p}\Bigg(\mathbb{P}(0,0;4p-1)+\dfrac{\Cr(0,1;4p)}{2}\Bigg) \notag \\
        &=2^{8p^2+2p}\Bigg(\dfrac{1}{4}+\dfrac{\Cr(0,1;4p)}{2}\Bigg) \notag
    \end{align}
    where we used that $\mathbb{P}(0,0;4p-1)=1/4$ by Proposition~\ref{Case3}. Now, again by applying Lemma~\ref{Creaform} to replace the creation rates by Kravchuk polynomials and afterwards making use of the symmetry relation given in Lemma~\ref{kravsum} the computation continues with
    \begin{align}
        &=2^{8p^2+2p}\Bigg(\dfrac{1}{4}+2^{-4p}\Krav(2p,-1+2p,4p-1)\Krav(-1+2p,2p,4p-1)\Bigg) \notag \\
         &=2^{8p^2+2p}\Bigg(\dfrac{1}{4}+2^{-4p}\Krav(2p,2p-1,4p-1)^2\frac{(2p)!(4p-1-2p)!}{(2p-1)!(4p-1-2p+1)!}\Bigg) \notag\\
         &=2^{8p^2+2p}\Bigg(\dfrac{1}{4}+2^{-4p}\Krav(2p,2p-1,4p-1)^2\Bigg) \notag \\
        &=2^{8p^2+2p}\Bigg(\dfrac{1}{4}+2^{-4p}\binom{2p-1}{p}^2\Bigg) \notag \\
         &=2^{8p^2+2p-2}+2^{8p^2-2p}\binom{2p-1}{p}^2,
     \end{align}
     where in the second to last step we applied the identity of Lemma~\ref{b-1}.

     \item For $M(\mathcal{A}^+\bs\{a,d\})$ we have by Figure \ref{A+ad}:
     \[M(\mathcal{A}^+\bs\{a,d\})=2^{8p^2+2p}\mathbb{P}(1,0;4p).\]
        However, by Equation \eqref{1reflex} this becomes
        \begin{align}
            M(\mathcal{A}^+\bs\{a,d\})&=2^{8p^2+2p}\Bigg(\frac{1}{2}-\mathbb{P}(0,-1;4p)\Bigg)\notag \\
            &=2^{8p^2+2p}\Bigg(\frac{1}{2}-\Bigg(\mathbb{P}(0,0;4p+1)-\dfrac{\Cr(0,0;4p+1)}{2} \Bigg)\Bigg) \notag \\
            &=2^{8p^2+2p-1}-2^{8p^2+2p}X+2^{8p^2-2p+1}\binom{2p-1}{p}^2.
        \end{align}

    \item Finally, by symmetry of the Aztec diamond, we obtain from the observations of Figure \ref{A+bc} that
    \begin{align}
        M(\mathcal{A}^+\bs\{b,c\})&=M(\mathcal{A}^+\bs\{a,d\})\notag \\
        &=2^{8p^2+2p-1}-2^{8p^2+2p}X+2^{8p^2-2p+1}\binom{2p-1}{p}^2. \label{Letzt}
    \end{align}
   \end{itemize}

   \begin{figure}[ht] 
		\centering
		\includegraphics[width=0.4\textwidth]{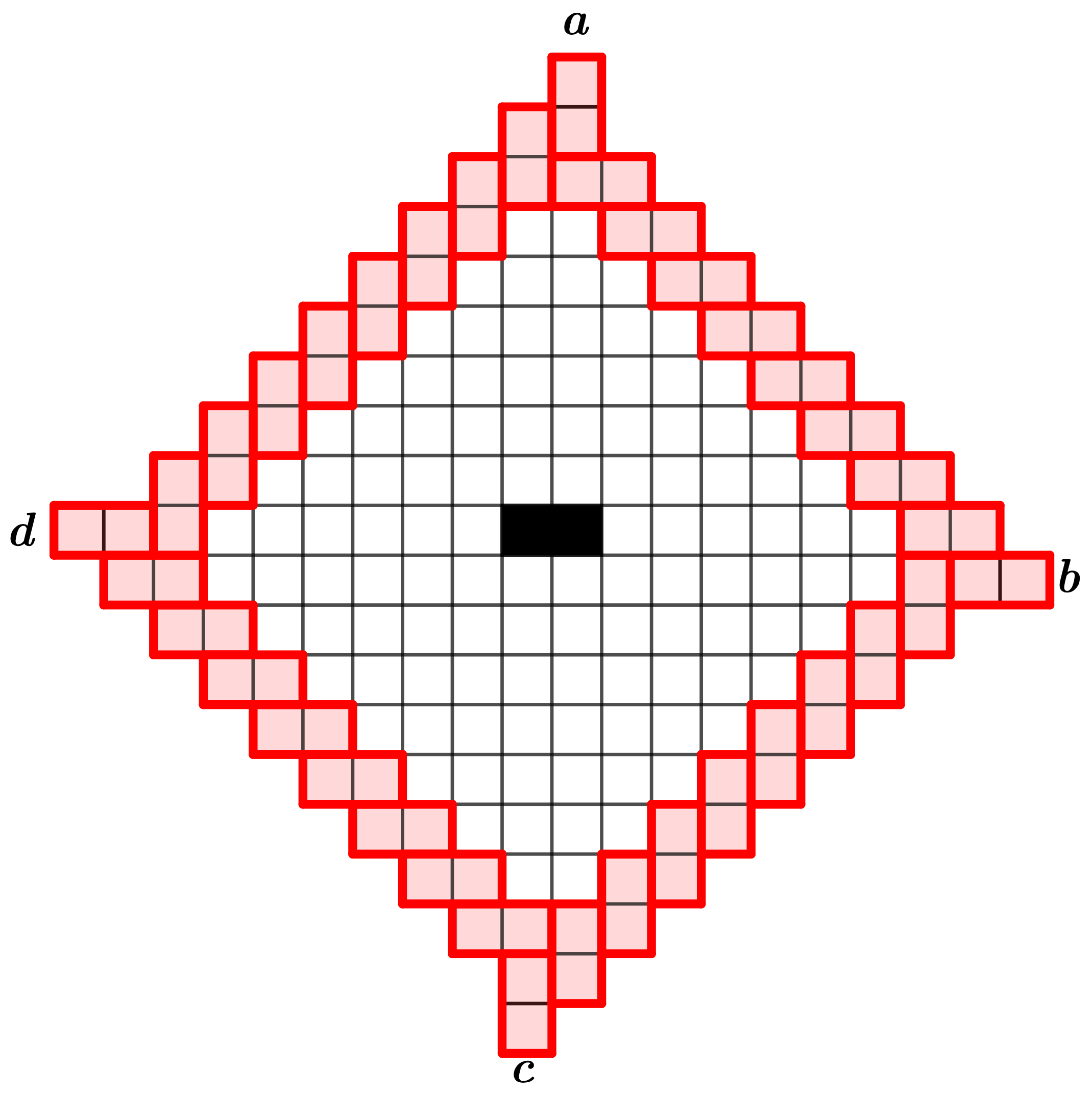}
		\caption{For the domain $\mathcal{A}^+$ we observe a lot of forced tiles. These can be removed and hence we derive that the number of domino tilings $M(\mathcal{A}^+)$ equals the number of tilings of the Aztec diamond of size $4p+1-2=4(p-1)+3$ with a horizontal domino shaped hole at position $(0,0)$. This number is given by $2^{4p\cdot(4(p-1)+3)/2}\mathbb{P}(0,0;4(p-1)+3)$. }
		\label{A+abcd}
	\end{figure}

    \begin{figure}[ht] 
		\centering
		\includegraphics[width=0.4\textwidth]{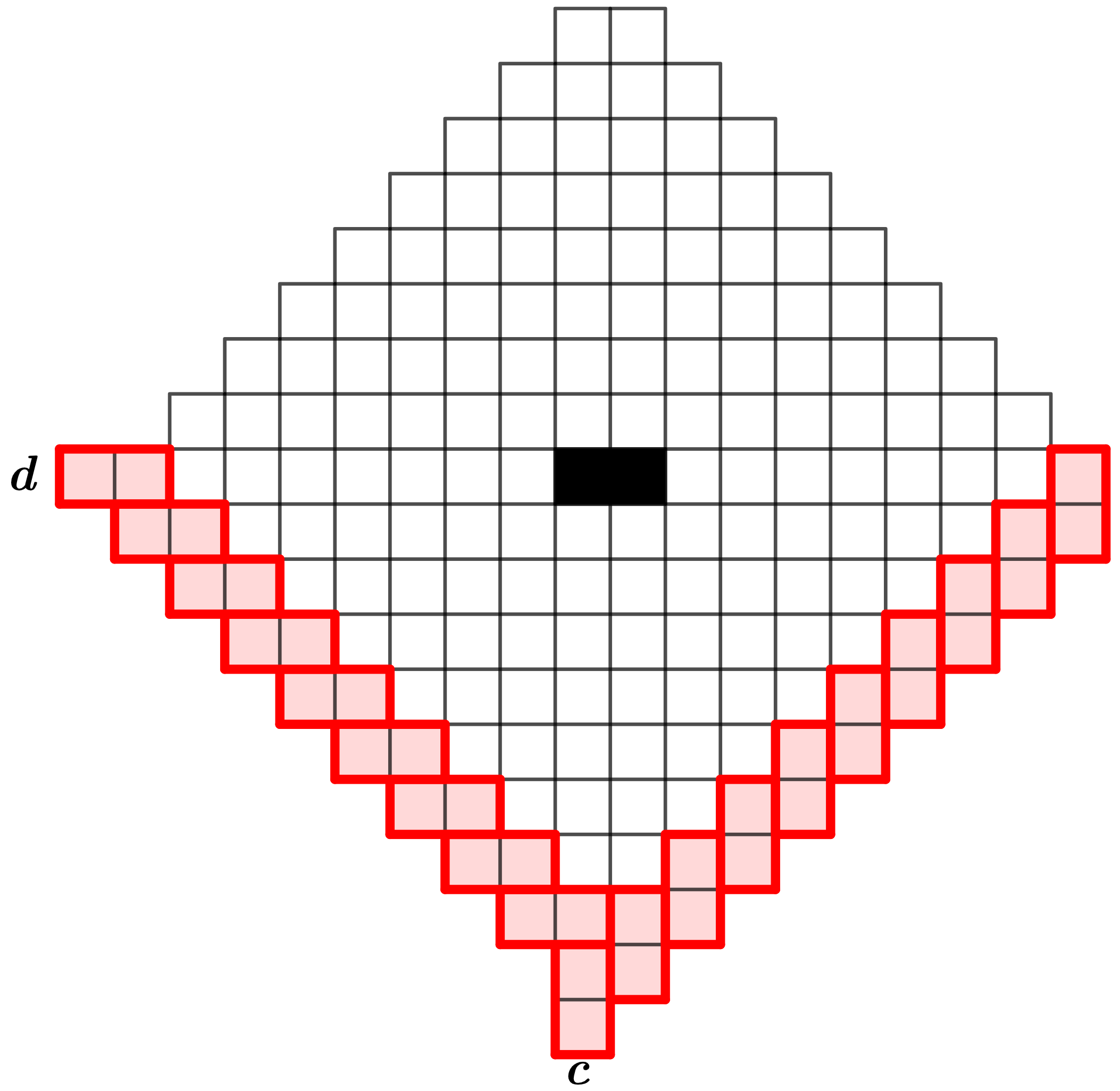}
		\caption{For the domain $\mathcal{A}^+\bs \{a,b\}$ we mark again the forced tiles. The remaining region corresponds to an Aztec diamond of size $4p$ with an horizontal domino shaped hole at position $(0,-1)$. Thus we have $M(\mathcal{A}^+\bs \{a,b\})=2^{2p(4p+1)}\mathbb{P}(0,-1;4p)$.}
		\label{A+ab}
	\end{figure}

     \begin{figure}[ht] 
		\centering
		\includegraphics[width=0.4\textwidth]{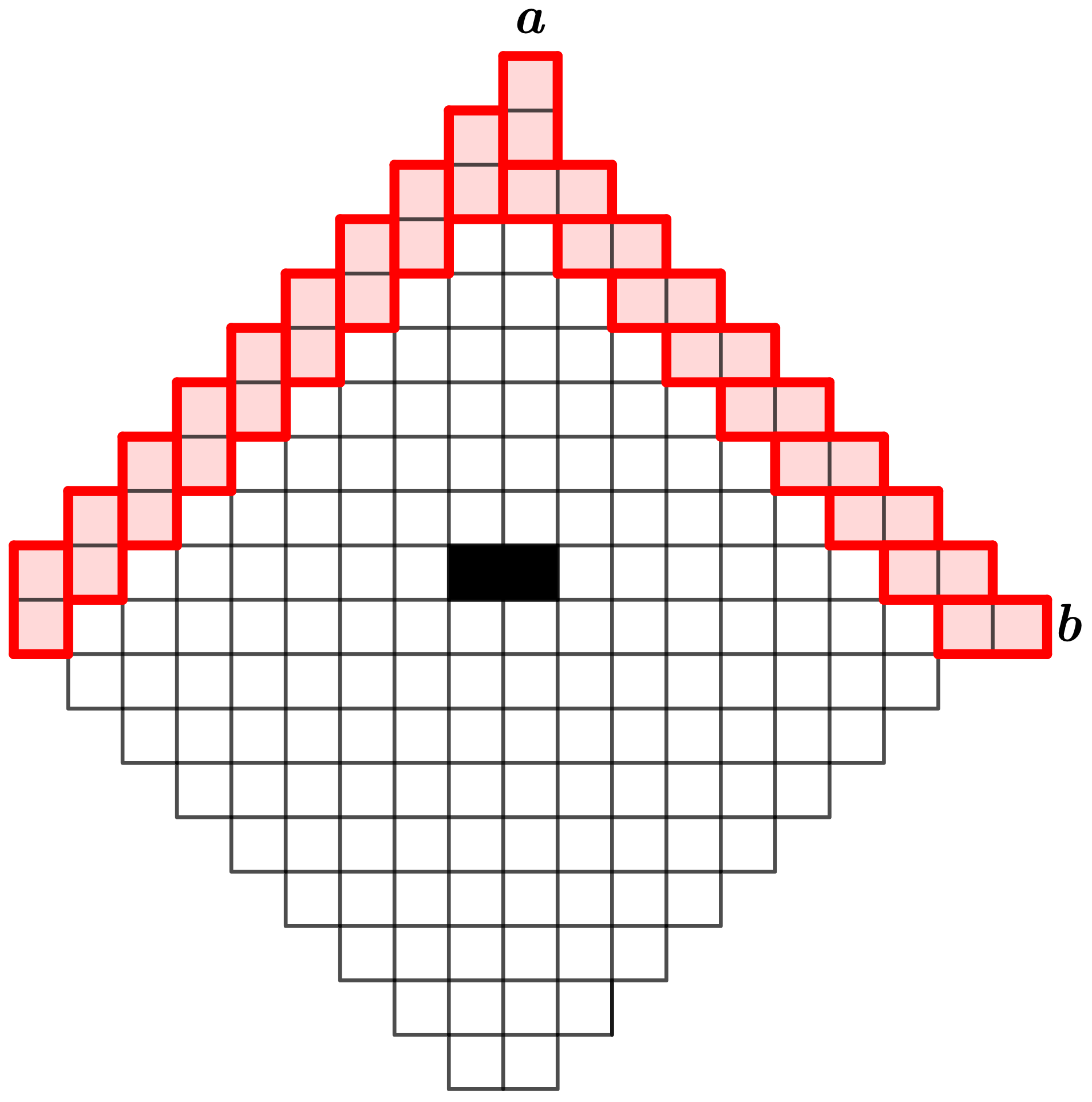}
		\caption{Also for the domain $\mathcal{A}^+\bs \{c,d\}$ we mark the forced tiles. The remaining region corresponds to an Aztec diamond of size $4p$ with an horizontal domino shaped hole at position $(0,1)$. Thus we have $M(\mathcal{A}^+\bs \{c,d\})=2^{2p(4p+1)}\mathbb{P}(0,1;4p)$.}
		\label{A+cd}
	\end{figure}

    \begin{figure}[ht] 
		\centering		\includegraphics[width=0.4\textwidth]{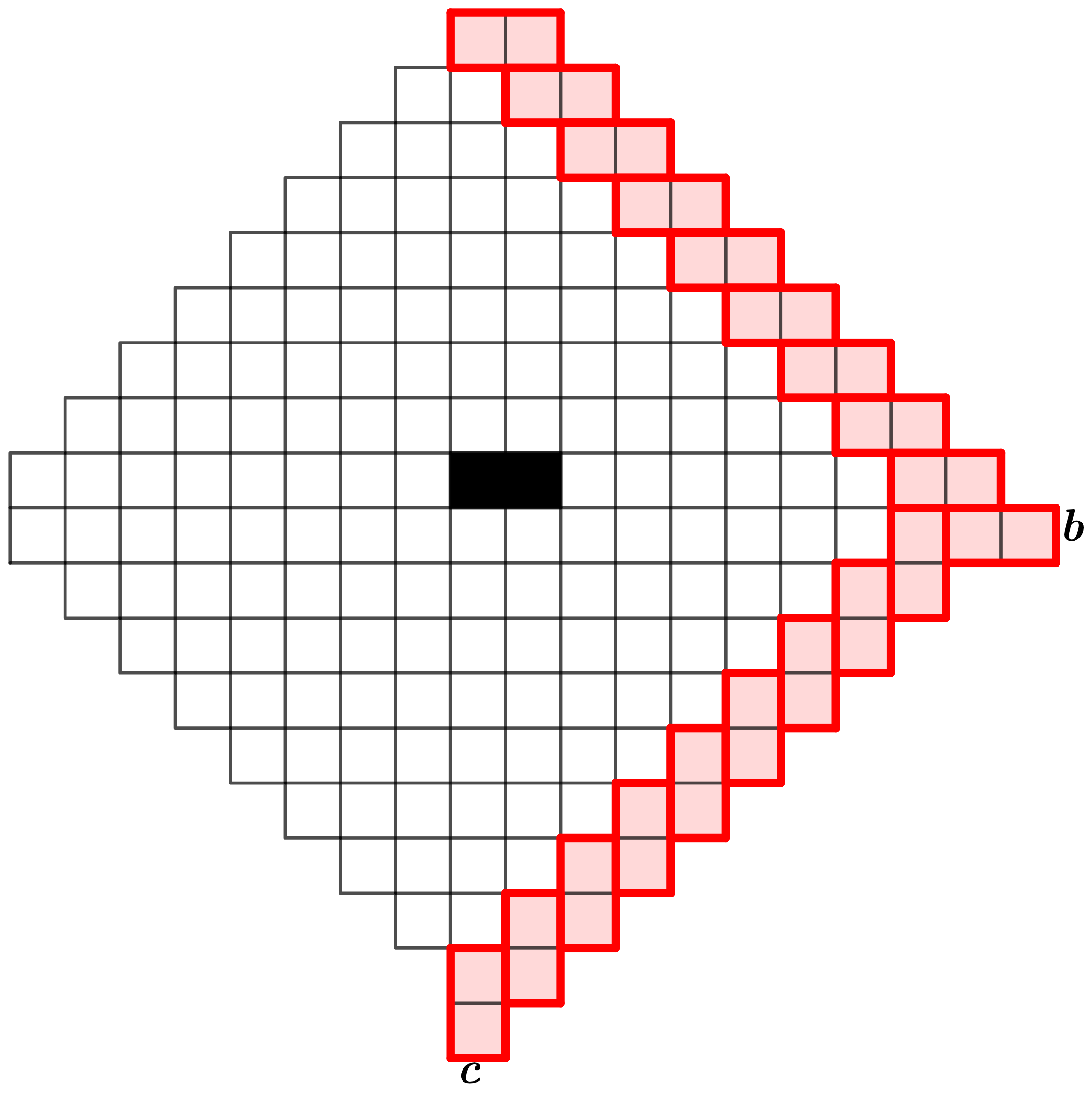}
		\caption{The domain $\mathcal{A}^+\bs \{a,d\}$ with marked forced tiles. The remaining region corresponds to an Aztec diamond of size $4p$ with an horizontal domino shaped hole at position $(1,0)$. Thus we have $M(\mathcal{A}^+\bs \{a,d\})=2^{2p(4p+1)}\mathbb{P}(1,0;4p)$.}
		\label{A+ad}
	\end{figure}

    \begin{figure}[ht] 
		\centering
		\includegraphics[width=0.5\textwidth]{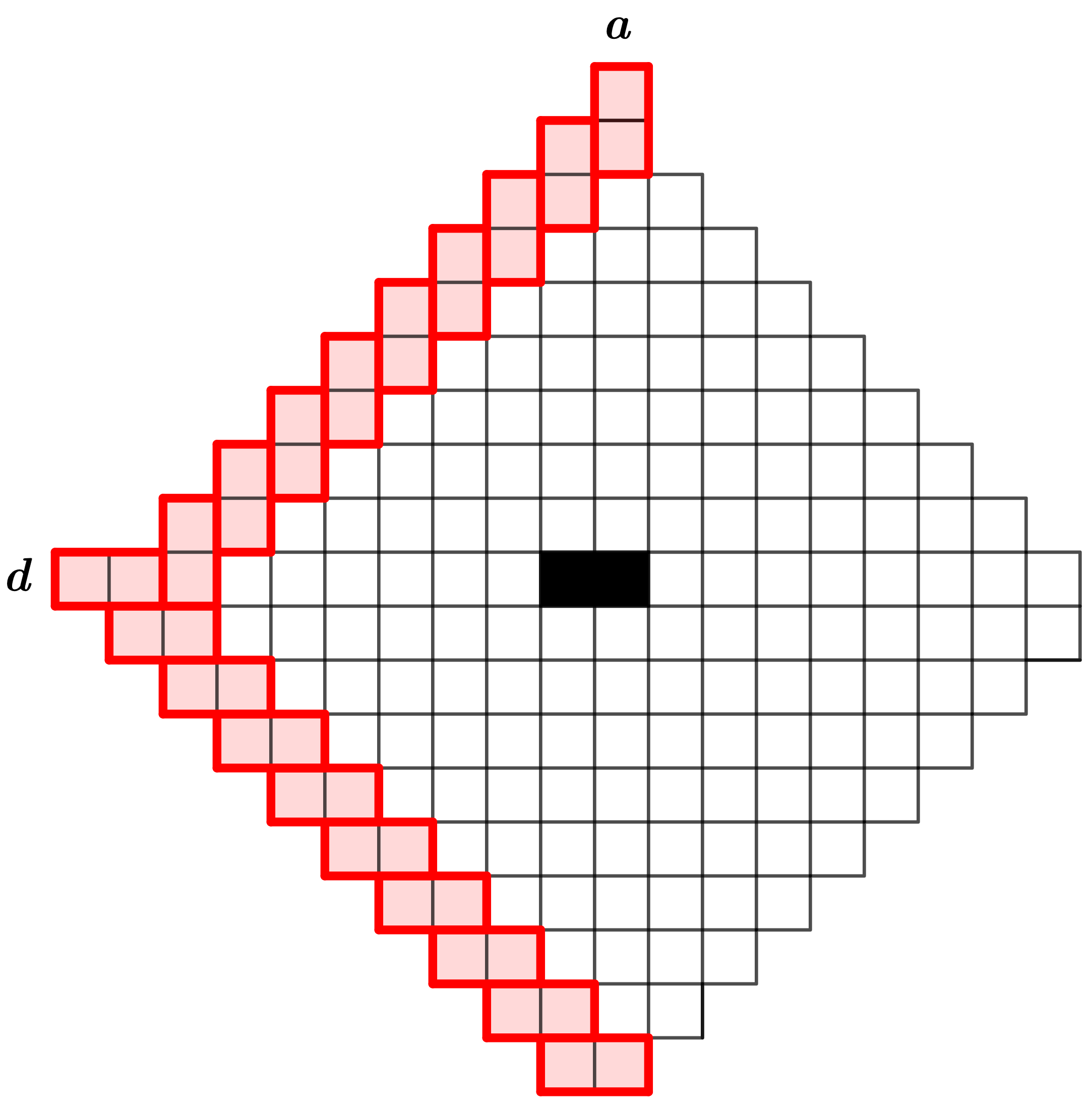}
		\caption{The domain $\mathcal{A}^+\bs \{b,c\}$ with marked forced tiles. The remaining region corresponds to an Aztec diamond of size $4p$ with an horizontal domino shaped hole at position $(-1,0)$. Thus we have $M(\mathcal{A}^+\bs \{b,c\})=2^{2p(4p+1)}\mathbb{P}(-1,0;4p)$.}
		\label{A+bc}
	\end{figure}

    If we now insert our results from Equations (\ref{Erst})-(\ref{Letzt}) into our equation obtained from the Kuo condensation we get
     \begin{align}
       0=&- 2^{8p^2-2p-2}\cdot2^{8p^2+6p+1}X &(I)\notag\\
       &+\Bigg(2^{8p^2+2p}X-2^{8p^2-2p+1}\binom{2p-1}{p}^2\Bigg)\Bigg(2^{8p^2+2p-2}+2^{8p^2-2p}\binom{2p-1}{p}^2\Bigg) &(II) \notag \\
       &+\Bigg(2^{8p^2+2p-1}-2^{8p^2+2p}X+2^{8p^2-2p+1}\binom{2p-1}{p}^2\Bigg)^2,\notag &(III) 
   \end{align}
    which is a quadratic equation in $X$. The proof is finished once we show that $\frac{1}{4}+2^{-4p}\binom{2p-1}{p}^2$ is a solution to this equation (and the second solution does not yield correct probabilities). Hence, in a next step we substitute $X= \frac{1}{4}+2^{-4p}\binom{2p-1}{p}^2$, and we do this for each summand $(I),(II),(III)$ separately.
    \begin{align}
        (I)&=-2^{16p^2+4p-1}\Bigg(\frac{1}{4}+2^{-4p}\binom{2p-1}{p}^2 \Bigg) \notag \\
        &=-2^{16p^2+4p-3}-2^{16p^2-1}\binom{2p-1}{p}^2. \notag
    \end{align}
   
    \begin{align}
        (II)&= \Bigg(2^{8p^2+2p}\Big(\frac{1}{4}+2^{-4p}\binom{2p-1}{p}^2\Big)-2^{8p^2-2p+1}\binom{2p-1}{p}^2\Bigg)\cdot\notag\\ 
        &\hspace{2cm}\cdot\Bigg(2^{8p^2+2p-2}+2^{8p^2-2p}\binom{2p-1}{p}^2\Bigg) \notag \\
        &=\Bigg(2^{8p^2+2p-2}+2^{8p^2-2p}\binom{2p-1}{p}^2-2^{8p^2-2p+1}\binom{2p-1}{p}^2\Bigg)\cdot \notag \\
        &\hspace{2cm}\cdot\Bigg(2^{8p^2+2p-2}+2^{8p^2-2p}\binom{2p-1}{p}^2\Bigg) \notag \\
        &=\Bigg(2^{8p^2+2p-2}-2^{8p^2-2p}\binom{2p-1}{p}^2\Bigg)\Bigg(2^{8p^2+2p-2}+2^{8p^2-2p}\binom{2p-1}{p}^2\Bigg) \notag \\
        &=2^{16p^2+4p-4}-2^{16p^2-4p}\binom{2p-1}{p}^4. \notag
    \end{align}
    
    \begin{align}
        (III)&=\Bigg(2^{8p^2+2p-1}-2^{8p^2+2p}\Bigg(\frac{1}{4}+2^{-4p}\binom{2p-1}{p}^2\Bigg)+2^{8p^2-2p+1}\binom{2p-1}{p}^2\Bigg)^2 \notag \\
        &=\Bigg(2^{8p^2+2p-1}-2^{8p^2+2p-2}-2^{8p^2-2p}\binom{2p-1}{p}^2+2^{8p^2-2p+1}\binom{2p-1}{p}^2\Bigg)^2 \notag \\
        &=\Bigg(2^{8p^2+2p-2}+2^{8p^2-2p}\binom{2p-1}{p}^2\Bigg)^2 \notag \\
        &=2^{16p^2+4p-4}+2^{16p^2-1}\binom{2p-1}{p}^2+2^{16p^2-4p}\binom{2p-1}{p}^4. \notag
    \end{align}

    Thus, for the right-hand-side of our original equation we have
    \begin{align}
        (I)+(II)+(III) &= - 2^{16p^2+4p-3}-2^{16p^2-1}\binom{2p-1}{p}^2 \notag \\
                        &\hphantom{={}} {}+2^{16p^2+4p-4}\hspace{3cm}-2^{16p^2-4p}\binom{2p-1}{p}^4 \notag \\
                        &\hphantom{={}} {}+2^{16p^2+4p-4}+2^{16p^2-1}\binom{2p-1}{p}^2+2^{16p^2-4p}\binom{2p-1}{p}^4 \notag\\
                        &= 0. \notag
    \end{align}
    In conclusion, $\frac{1}{4}+2^{-4p}\binom{2p-1}{p}^2$ is a solution to the equation dictated by Kuo condensation and therefore a possible candidate for $\mathbb{P}(0,0;4p+1)$. The other candidate is the second solution of the quadratic equation. By applying Vieta's Theorem it is easily checked, that the second solution does not yield the correct numbers for the probabilities. (In particular the second solution is always greater than $1/2$ while $\mathbb{P}(0,0;4p+1)<1/2$ due to rotational symmetry.)
    However, this shows that
    \[\mathbb{P}(0,0;1)=\frac{1}{4}+2^{-4p}\binom{2p-1}{p}^2\]
    and this finishes both the proofs of Proposition \ref{Case1} and Theorem \ref{1/4}.
\end{proof}

\section{Remark on asymptotic behaviour}\label{RemA}

For the sake of completeness we should also discuss the asymptotic behaviour of the placement probabilities $\mathbb{P}(l,m;4p+\alpha)$ as $p$ tends to infinity. The result is \textit{just} a remark, since it has already been studied by Cohn, Elkies and Propp in \cite{localstats}. They have proven the following theorem.

\begin{thm}[{\cite[Thm.~1]{localstats}}] \label{statrel}
    Let $U$ be an open set containing the points $(\pm \frac{1}{2},\frac{1}{2})$. If $(x,y)$ is the normalized location of a horizontal domino space, whose left square is coloured black, and $(x,y)\notin U$ then as $n\to \infty$ the placement probability at $(x,y)$ is within $o(1)$ of $\mathcal{P}(x,y)$, where
    \[\mathcal{P}(x,y)=\begin{cases}
        0, &\text{if } \ x^2+y^2\geq \frac{1}{2} \text{ and } y<\frac{1}{2}, \\
        1, &\text{if } \ x^2+y^2\geq \frac{1}{2} \text{ and } y>\frac{1}{2},\\
        \frac{1}{2}+\frac{1}{\pi}\tan^{-1}\Big(\frac{2y-1}{\sqrt{1-2x^2-2y^2}}\Big), &\text{if } \ x^2+y^2< \frac{1}{2} .
    \end{cases}\]
\end{thm}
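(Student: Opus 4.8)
The plan is to read off the asymptotics from the \emph{exact} Kravchuk-polynomial description of the creation rate in Lemma~\ref{Creaform}, rather than from the geometry of the shuffling algorithm. Since $\Cr(l,m;n)=2\bigl(\mathbb{P}(l,m;n)-\mathbb{P}(l,m-1;n-1)\bigr)$ is, up to a shift of the size, the discrete vertical derivative of $\mathbb{P}$, the strategy splits into two parts: first determine the leading-order behaviour of $2^{-n}\Krav(a,b;n)\Krav(b,a;n)$ for $(l,m)=(xn,yn)$, and then recover $\mathcal{P}(x,y)$ by telescoping the identity $\mathbb{P}(l,m;n)-\mathbb{P}(l,m-1;n-1)=\tfrac12\Cr(l,m;n)$ from a base point lying deep inside a frozen region, where the probability is manifestly $0$ or $1$, up to the position $(l,m)$.

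The analytic engine is the saddle-point method applied to the Cauchy representation
\[
\Krav(a,b;n)=\frac{1}{2\pi i}\oint\frac{(1+z)^{n-b}(1-z)^{b}}{z^{a+1}}\,dz=\frac{1}{2\pi i}\oint\frac{e^{n\phi(z)}}{z}\,dz,
\]
where $\phi(z)=(1-\beta)\log(1+z)+\beta\log(1-z)-\alpha\log z$ with $\alpha=a/n\to(x+y+1)/2$ and $\beta=b/n\to(x-y+1)/2$. The saddle equation $\phi'(z)=0$ collapses to the quadratic
\[
(\alpha-1)z^{2}+(1-2\beta)z-\alpha=0,
\]
whose discriminant I compute to equal $(x+y)^{2}+(x-y)^{2}-1=2(x^{2}+y^{2})-1$. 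This single computation already manufactures the arctic circle: the two saddles are real when $x^{2}+y^{2}>\tfrac12$ and form a complex-conjugate pair when $x^{2}+y^{2}<\tfrac12$, and the companion factor $\Krav(b,a;n)$ behaves in the same way with the roles of $\alpha$ and $\beta$ interchanged.

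In the frozen region $x^{2}+y^{2}>\tfrac12$ the steepest-descent contour passes through a single dominant real saddle, and a comparison of the resulting exponential rate with the normalising $2^{n}$ shows that $2^{-n}\Krav(a,b;n)\Krav(b,a;n)$ decays exponentially; hence the creation rate is negligible and $\mathbb{P}$ is asymptotically constant along the telescoping path, equal to its frozen boundary value. That value is $0$ below the line $y=\tfrac12$ and $1$ above it, because in the top frozen corner the horizontal domino with black left square is forced while in the bottom and side corners it is absent; this is the dichotomy recorded in the first two cases of $\mathcal{P}$. In the arctic region $x^{2}+y^{2}<\tfrac12$ the conjugate pair of saddles $z_{\pm}=\rho e^{\pm i\psi}$ contributes complex-conjugate terms, so each Kravchuk factor acquires an oscillatory $\cos$-profile and the creation rate becomes a density of order $1/n$ whose smooth envelope is the local derivative of $\mathcal{P}$. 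Integrating this density along the telescoping path from the arctic boundary produces the inverse tangent: the argument $\dfrac{2y-1}{\sqrt{1-2x^{2}-2y^{2}}}$ is exactly the tangent of the saddle angle, with $1-2x^{2}-2y^{2}$ being minus the discriminant, and the additive constant $\tfrac12$ is fixed on the midline $y=\tfrac12$ (where $\arctan 0=0$) and checked for consistency by letting $x^{2}+y^{2}\to\tfrac12^{-}$, which sends the argument to $\pm\infty$ and $\mathcal{P}$ to the frozen values $1$ or $0$.

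The hard part is neither the identification of the two regimes nor the algebra of the discriminant, but the quantitative bookkeeping: converting the discrete telescoping sum of creation rates into a Riemann integral of the limiting density with a uniformly controlled error, and extracting the exact phase and amplitude of the oscillatory saddle contributions so that the constant $\tfrac12$ and the precise arctangent emerge rather than merely the correct functional form. The uniformity of the $o(1)$ estimate on sets bounded away from $U$ is also essential, and it is exactly here that the excision of $U$ is forced: near the arctic circle, and especially near the two tangency corners $(\pm\tfrac12,\tfrac12)$, the two saddles coalesce, the Gaussian saddle-point approximation breaks down in favour of an Airy-type regime, and the simple leading-order estimates above cease to hold.
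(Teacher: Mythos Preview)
The paper does not prove this theorem at all: it is quoted verbatim from Cohn, Elkies and Propp~\cite{localstats} and used only as an input to the one-line Corollary that follows. So there is no ``paper's own proof'' to compare against; the relevant comparison is with the original argument in~\cite{localstats}.

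Your sketch is in fact quite close in spirit to what Cohn, Elkies and Propp do: they too start from the exact Kravchuk formula for the creation rate and extract the asymptotics via a stationary-phase/saddle-point analysis of the generating-function integral, obtaining the arctic circle from the discriminant and the arctangent from the saddle angle. So the high-level route you describe is the standard one.

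That said, what you have written is an outline, not a proof. You explicitly flag the two places where real work is needed --- the uniform control of the error when converting the telescoped sum of creation rates into an integral, and the extraction of the precise phase/amplitude so that the constant $\tfrac12$ and the exact arctangent appear --- and then you do not carry them out. In~\cite{localstats} these are the substantial parts of the argument (several pages of careful estimates, including handling the near-coalescence of saddles and the uniformity away from $U$). Your discriminant computation and the identification of the regimes are correct, and the telescoping idea from a frozen base point is exactly right, but as written the proposal would not stand as a proof of the theorem: it is a correct plan with the analytic core left as an exercise.
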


Here, the term \textit{domino space} refers to the location in the Aztec diamond before a horizontal tile is observed in a random tiling. The theorem yields the asymptotic behaviour of the placement probabilities for the \textit{relative positions} $(x,y)$ in an Aztec diamond of size $n\in \N$ which is normalized, i.e., an original position $(l,m)$ is now located at $(x,y)=(\frac{l}{n},\frac{m}{n})$. However, when $n\to \infty$ for every fixed $l,m\in \Z$ we observe
\[(x,y)=\Big(\frac{l}{n},\frac{m}{n}\Big)\to(0,0).\]
Thus we obtain the following simple corollary.

\begin{cor}\label{placeasymp}
    Let $l,m\in \Z$ be arbitrary and $\alpha\in\{0,1,2,3\}$ such that $l+m\equiv \alpha +1 $~\em{mod}~$2$. Then we have
    \[\mathbb{P}(l,m;4p+\alpha)\underset{p\to \infty}{\longrightarrow} \frac{1}{4}.\]
\end{cor}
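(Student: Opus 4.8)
The plan is to derive the corollary directly from the asymptotic result of Cohn, Elkies and Propp, Theorem~\ref{statrel}, by following the normalized location of a fixed absolute position as the diamond grows. Fix $l,m\in\Z$ and $\alpha\in\{0,1,2,3\}$ with $l+m\equiv\alpha+1\pmod2$, set $n=4p+\alpha$, and observe that the normalized location of the corresponding domino space is $(x,y)=(l/n,m/n)$, which tends to the origin as $p\to\infty$. Evaluating the limiting density at the origin, and noting that $0^2+0^2=0<\tfrac12$ places it on the third branch of the formula in Theorem~\ref{statrel}, I compute
\[\mathcal{P}(0,0)=\frac12+\frac1\pi\tan^{-1}\!\Big(\frac{-1}{\sqrt{1}}\Big)=\frac12-\frac14=\frac14,\]
which is exactly the value the corollary predicts.

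The step I expect to be the main obstacle is that Theorem~\ref{statrel} is stated for a \emph{fixed} relative position $(x,y)$, whereas here the relative position $(l/n,m/n)$ drifts toward the origin as $n$ grows, so the theorem cannot be applied verbatim. To bridge this gap I would choose the exceptional set $U$ in Theorem~\ref{statrel} to be a small neighbourhood of $(\pm\tfrac12,\tfrac12)$; then the origin admits a compact neighbourhood $K$ disjoint from $U$, and $(l/n,m/n)\in K$ for all large $p$. On such a compact set the $o(1)$ error in Theorem~\ref{statrel} is uniform (this is how the estimate is obtained in \cite{localstats}) and $\mathcal{P}$ is continuous there; combining uniform convergence on $K$ with $\mathcal{P}(l/n,m/n)\to\mathcal{P}(0,0)=\tfrac14$ then yields $\mathbb{P}(l,m;4p+\alpha)\to\tfrac14$.

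As a consistency check that also connects the statement back to Theorem~\ref{1/4}, I would finally observe that the corollary is equivalent to the assertion that the correction term $2^{-4p-\alpha}\binom{2p-1}{p}^2 f_{l,m,\alpha}(p)$ vanishes in the limit. Since Stirling's formula gives $2^{-4p}\binom{2p-1}{p}^2=\Theta(1/p)$, this amounts to the fact that the rational function $f_{l,m,\alpha}(p)$ stays bounded as $p\to\infty$, which is visible in all the examples computed earlier and is forced by the argument above.
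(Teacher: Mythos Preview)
Your proposal is correct and follows essentially the same route as the paper: both observe that for fixed $(l,m)$ the normalized location $(l/n,m/n)$ tends to $(0,0)$ and then evaluate $\mathcal{P}(0,0)=\tfrac14$ from Theorem~\ref{statrel}. If anything, you are more careful than the paper, which applies Theorem~\ref{statrel} without discussing the uniformity issue you flag; your appeal to uniformity on a compact neighbourhood of the origin (and the alternative Stirling check via Theorem~\ref{1/4}) is a reasonable way to close that gap. One small quibble in your final paragraph: the convergence only forces $f_{l,m,\alpha}(p)=o(p)$, not boundedness, though boundedness does hold in all computed examples.
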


\begin{proof}
    Although, it might not be difficult to argue by using the formula in our main result that 
    \[2^{-4p-\alpha}\binom{2p-1}{p}^2f_{l,m,\alpha}(p)\underset{p\to \infty}{\longrightarrow}0\]
    for all the rational functions $f_{l,m,\alpha}(p)$, it is even simpler to just use Theorem \ref{statrel} and evaluate $\mathcal{P}(0,0)$:
    \begin{align}
        \mathcal{P}(0,0)=\frac{1}{2}+\frac{1}{\pi}\tan^{-1}(-1)=\frac{1}{2}-\frac{1}{4}=\frac{1}{4}. \notag
    \end{align}
\end{proof}

\section{Tiling enumeration of Aztec diamonds with $2\times2$-square holes.}\label{dimdef}

In this last section we apply our main result Theorem \ref{1/4} to count domino tilings in an Aztec diamond which has a $2\times2$-square hole whose centre is located at $(l,m)\in \Z^2$.

\begin{cor} \label{squarehole}
    The number of domino tilings of an Aztec diamond of size $4p+\alpha$,\break$\alpha\in\{0,1,2,3\}$  with a $2\times2$-square hole centred at position $(l,m)\in \Z^2$ is equal to
    \[2^{(4p+\alpha+1)(4p+\alpha)/2}\Bigg(\frac{1}{8}+2^{-4p-\alpha-2}\binom{2p-1}{p}^2g_{l,m,\alpha}(p)+2^{-8p-2\alpha}\binom{2p-1}{p}^4 h_{l,m,\alpha}(p)\Bigg),\]
    where $g_{l,m,\alpha}(p)$ and $h_{l,m,\alpha}(p)$ are two rational functions in $p$.
\end{cor}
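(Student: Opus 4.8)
The plan is to apply Kuo condensation (Theorem~\ref{Kuo}) to the four unit squares forming the $2\times2$ hole, in the same spirit as the proof of Proposition~\ref{Case3}, and then convert every resulting matching number into a placement probability controlled by Theorem~\ref{1/4}. First I would label the four squares of the block centred at $(l,m)$ as $a,b,c,d$ in clockwise order, so that $\{a,b\}$ and $\{c,d\}$ are the top and bottom horizontal dominoes while $\{a,d\}$ and $\{b,c\}$ are the left and right vertical dominoes. Since the block inherits the chessboard colouring, the diagonal pairs $\{a,c\}$ and $\{b,d\}$ are monochromatic, so $M(\mathcal{A}\bs\{a,c\})=M(\mathcal{A}\bs\{b,d\})=0$ and the condensation identity collapses to
\begin{align}
M(\mathcal{A})\,M(\mathcal{A}\bs\{a,b,c,d\})=&\,M(\mathcal{A}\bs\{a,b\})M(\mathcal{A}\bs\{c,d\})\notag\\
&+M(\mathcal{A}\bs\{a,d\})M(\mathcal{A}\bs\{b,c\}),\notag
\end{align}
where $M(\mathcal{A}\bs\{a,b,c,d\})$ is exactly the holey count we want and $M(\mathcal{A})=2^{(4p+\alpha+1)(4p+\alpha)/2}$ is the full Aztec diamond number.

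Next I would use that deleting an adjacent black--white pair is the same as fixing a domino, so each factor satisfies $M(\mathcal{A}\bs\{a,b\})=M(\mathcal{A})\,\mathbb{P}_{\mathrm{top}}$, and likewise for the other three pairs. Dividing the identity by $M(\mathcal{A})$ then gives the clean intermediate formula
\[M(\mathcal{A}\bs\{a,b,c,d\})=M(\mathcal{A})\bigl(\mathbb{P}_{\mathrm{top}}\mathbb{P}_{\mathrm{bot}}+\mathbb{P}_{\mathrm{left}}\mathbb{P}_{\mathrm{right}}\bigr).\]
Each of the four factors is the placement probability of one concrete domino (two horizontal, two vertical, of possibly differing left--square colour). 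Using the rotational and reflective symmetries of the Aztec diamond already employed in Subsection~\ref{origin} (compare Figure~\ref{orient}, where vertical placements are identified with horizontal ones), I would rewrite all four as standard probabilities of the form $\mathbb{P}(l',m';4p+\alpha)$ and invoke Theorem~\ref{1/4} to put each one in the shape $\tfrac14+2^{-4p-\alpha}\binom{2p-1}{p}^2 f_i(p)$ for a rational function $f_i$.

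Finally I would expand the two products. Writing $\mathbb{P}_i=\tfrac14+2^{-4p-\alpha}\binom{2p-1}{p}^2 f_i(p)$, the product of two such terms contributes a constant $\tfrac1{16}$, a cross term carrying one factor $2^{-4p-\alpha-2}\binom{2p-1}{p}^2$, and a quartic term carrying $2^{-8p-2\alpha}\binom{2p-1}{p}^4$. Summing the two products yields constant $\tfrac18$, and setting
\[g_{l,m,\alpha}(p):=f_{\mathrm{top}}+f_{\mathrm{bot}}+f_{\mathrm{left}}+f_{\mathrm{right}},\qquad h_{l,m,\alpha}(p):=f_{\mathrm{top}}f_{\mathrm{bot}}+f_{\mathrm{left}}f_{\mathrm{right}},\]
both manifestly rational in $p$, reproduces exactly the claimed expression once multiplied back by $M(\mathcal{A})$.

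The step I expect to be the main obstacle is the symmetry bookkeeping that justifies the substitution into Theorem~\ref{1/4}. For a hole at an arbitrary position the four bounding dominoes need not all be black--left horizontal tiles: the top and bottom tiles correspond to positions $(l,m)$ and $(l,m-1)$, while the two vertical tiles must be rotated into horizontal ones, and the left--square colour of each must be tracked so that one selects the correct reflection or rotation (with the induced shift of $(l,m)$ and, after a quarter turn, the bookkeeping of the residue $\alpha$). Once each of the four probabilities is matched to a genuinely realisable standard placement probability, each is of the stated $\tfrac14+\cdots$ form and the remaining algebra is the routine expansion described above.
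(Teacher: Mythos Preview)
Your proposal is correct and follows essentially the same approach as the paper: apply Kuo condensation to the four squares of the hole, drop the monochromatic terms, rewrite each remaining factor as $M(\mathcal{A})$ times a placement probability via the rotational symmetries, plug in Theorem~\ref{1/4}, and expand. The paper carries out the symmetry bookkeeping you flag as the main obstacle by explicitly identifying the four probabilities as $\mathbb{P}(l,m;4p+\alpha)$, $\mathbb{P}(-l,-m;4p+\alpha)$, $\mathbb{P}(-m,l;4p+\alpha)$, $\mathbb{P}(m,-l;4p+\alpha)$ under quarter rotations; note in particular that the size $4p+\alpha$ is unchanged by rotation, so your worry about ``bookkeeping of the residue $\alpha$'' is unfounded.
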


\begin{proof}
    We use again Kuo condensation, Theorem \ref{Kuo}, to deduce this shape for the formula. Every $2\times2$-square hole consists of four neighbouring squares labelled in clockwise order $a,b,c,d$. Without loss of generality assume that the squares $a$ and $b$ give a horizontal domino whose left square is black and labelled by $a$. Denote by $\mathcal{A}$ the Aztec diamond of size $4p-\alpha$. Kuo condensation tells us
    \begin{align}
        M(\mathcal{A})M(\mathcal{A}\bs\{a,b,c,d\})= M(\mathcal{A}\bs\{a,b\})M(\mathcal{A}\bs\{c,d\})+M(\mathcal{A}\bs\{a,d\})M(\mathcal{A}\bs\{b,c\}). \notag
    \end{align}
    Now, again by rotating the Aztec diamond and viewing it from the right direction we have in our standard notation:
    \begin{align}
        M(\mathcal{A})&=2^{(4p+\alpha+1)(4p+\alpha)/2}, \notag \\
        M(\mathcal{A}\bs\{a,b\})&=2^{(4p+\alpha+1)(4p+\alpha)/2}\mathbb{P}(l,m;4p+\alpha), \notag \\
         M(\mathcal{A}\bs\{c,d\})&=2^{(4p+\alpha+1)(4p+\alpha)/2}\mathbb{P}(-l,-m;4p+\alpha), \notag \\
          M(\mathcal{A}\bs\{a,d\})&=2^{(4p+\alpha+1)(4p+\alpha)/2}\mathbb{P}(-m,l;4p+\alpha), \notag \\
           M(\mathcal{A}\bs\{b,c\})&=2^{(4p+\alpha+1)(4p+\alpha)/2}\mathbb{P}(m,-l;4p+\alpha). \notag
    \end{align}
    Now, applying Theorem \ref{1/4} to the probabilities and inserting this into our equation for Kuo condensation we obtain
    \begin{align}
        M(\mathcal{A}\bs\{a,b,c,d\})&=2^{(4p+\alpha+1)(4p+\alpha)/2}\cdot\notag \\\cdot\Bigg(\Bigg(\frac{1}{4}+2^{-4p-\alpha}&\binom{2p-1}{p}^2f_{l,m,\alpha}(p)\Bigg)\Bigg(\frac{1}{4}+2^{-4p-\alpha}\binom{2p-1}{p}^2f_{-l,-m,\alpha}(p)\Bigg)+\notag \\
        \ \Bigg(\frac{1}{4}+2^{-4p-\alpha}&\binom{2p-1}{p}^2f_{-m,l,\alpha}(p)\Bigg)\Bigg(\frac{1}{4}+2^{-4p-\alpha}\binom{2p-1}{p}^2f_{m,-l,\alpha}(p)\Bigg)\Bigg)= \notag\\
        2^{(4p+\alpha+1)(4p+\alpha)/2}&\Bigg(\frac{1}{8}+2^{-4p-\alpha-2}\binom{2p-1}{p}^2g_{l,m,\alpha}(p)+2^{-8p-2\alpha}\binom{2p-1}{p}^4 h_{l,m,\alpha}(p)\Bigg), \notag
    \end{align}
    where 
    \begin{align}
        g_{l,m,\alpha}(p)&=f_{l,m,\alpha}(p)+f_{-l,-m,\alpha}(p)+f_{-m,l,\alpha}(p)+f_{m,-l,\alpha}(p) \notag \\
        h_{l,m,\alpha}(p)&=f_{l,m,\alpha}(p)f_{-l,-m,\alpha}(p)+f_{-m,l,\alpha}(p)f_{m,-l,\alpha}(p) \notag
    \end{align}
    are rational functions in $p$.
\end{proof}

In particular, we obtain the following complement to Theorem \ref{reflect} by Ciucu.

\begin{cor}
    The Aztec diamond with a central $2\times 2$-square hole has 
    \[2^{2p(4p+1)}\Bigg(\frac{1}{8}+2^{-4p}\binom{2p-1}{p}^2+2^{-8p+1}\binom{2p-1}{p}^4\Bigg),\]
    tilings if the diamond is of size $4p$ and
    \[2^{(2p+1)(4p+1)}\Bigg(\frac{1}{8}+2^{-4p}\binom{2p-1}{p}^2+2^{-8p+1}\binom{2p-1}{p}^4\Bigg),\]
    tilings if the diamond is of size $4p+1$.
\end{cor}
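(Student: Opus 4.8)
The plan is to read this off Kuo condensation (Theorem~\ref{Kuo}) applied to the four squares of the central $2\times 2$ block, using the extra symmetry that a \emph{central} hole enjoys. The Aztec diamond $\mathcal{A}$ of any size is invariant under the $90^\circ$ rotation about its centre, and this rotation cyclically permutes the four unit squares $a,b,c,d$ of the central block. Consequently it carries the four one-domino subgraphs $\mathcal{A}\bs\{a,b\}$, $\mathcal{A}\bs\{b,c\}$, $\mathcal{A}\bs\{c,d\}$, $\mathcal{A}\bs\{a,d\}$ into one another, so they all have the same number of perfect matchings, say $Q$. Moreover two squares joined by a diagonal of the block carry the same colour, so removing them unbalances the bipartition and $M(\mathcal{A}\bs\{a,c\})=0$. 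Hence the condensation identity collapses to
\[ M(\mathcal{A})\,M(\mathcal{A}\bs\{a,b,c,d\}) = Q^2 + Q^2 = 2Q^2 . \]
This is essentially the central specialization of Corollary~\ref{squarehole}, but the symmetry lets us avoid computing four separate placement probabilities.

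Next I would identify $Q$ with a single placement probability. Deleting the two squares of the black-left horizontal domino inside the block leaves precisely the matchings of $\mathcal{A}$ using that domino, so $Q=M(\mathcal{A})\,\mathbb{P}$, where $\mathbb{P}$ is its placement probability and $M(\mathcal{A})=2^{(4p+\alpha+1)(4p+\alpha)/2}=:2^{S}$. The collapsed identity then reads
\[ M(\mathcal{A}\bs\{a,b,c,d\}) = \frac{2Q^2}{M(\mathcal{A})} = 2^{\,S+1}\,\mathbb{P}^2 . \]
It remains to insert the exact value of $\mathbb{P}$, which is already available. For size $4p+1$ the central black-left domino sits at the origin, so $\mathbb{P}=\mathbb{P}(0,0;4p+1)=\tfrac14+2^{-4p}\binom{2p-1}{p}^2$ by Proposition~\ref{Case1}; for size $4p$ the colouring places it one step lower, so $\mathbb{P}=\mathbb{P}(0,-1;4p)$, whose value $\tfrac14-2^{-4p}\binom{2p-1}{p}^2$ follows from $\mathbb{P}(0,-1;4p)=\mathbb{P}(0,0;4p+1)-\tfrac12\Cr(0,0;4p+1)$ together with Propositions~\ref{Case3} and~\ref{Case1}. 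Squaring and collecting powers of two in $2^{S+1}\big(\tfrac14\pm 2^{-4p}\binom{2p-1}{p}^2\big)^2$ yields the three-term expression $2^{S}\big(\tfrac18\pm 2^{-4p}\binom{2p-1}{p}^2+2^{-8p+1}\binom{2p-1}{p}^4\big)$, the middle sign being determined by the residue of the size modulo $4$.

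The main obstacle I anticipate is the geometric and chessboard bookkeeping rather than the algebra: for each parity one must determine which of the four edges of the central block is the black-left horizontal domino and read off its coordinates, since the centre of the diamond meets the integer grid differently in the two cases. This is what fixes the residue ($\alpha=1$ for size $4p+1$, $\alpha=0$ for size $4p$) and hence the sign of the $\binom{2p-1}{p}^2$-term. A secondary point worth stating cleanly is why the diagonal term of Kuo condensation vanishes, namely that the two diagonal squares of the block are equicoloured, so that the right-hand side genuinely reduces to $2Q^2$ and the final count is a perfect square times $2^{\,S+1}$.
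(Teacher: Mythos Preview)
Your approach is essentially the paper's: apply Kuo condensation (Theorem~\ref{Kuo}) to the four central squares, drop the diagonal term by bipartiteness, and express the surviving products through the known central placement probabilities. The paper routes this through Corollary~\ref{squarehole} and then specialises; you invoke the $90^\circ$ rotational symmetry directly to collapse the right-hand side to $2Q^2$, which is cleaner but equivalent.

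There is one substantive discrepancy, and it is in your favour. For size $4p$ you correctly locate the black-left horizontal edge of the central block at $(0,-1)$ and obtain $\mathbb{P}(0,-1;4p)=\tfrac14-2^{-4p}\binom{2p-1}{p}^2$ (this is $f_{0,-1,0}(p)=-1$, already recorded among the paper's examples). Squaring then produces the middle term with a \emph{minus} sign, whereas the stated corollary carries a plus. Your sign is the correct one: for $p=1$ your expression gives $72$ and the printed formula $200$, and $2\cdot(1024\cdot\tfrac{3}{16})^2/1024=72$ checks directly. The paper's own proof invokes ``$f_{0,0,0}(p)=1$'', but $\mathbb{P}(0,0;4p)=0$ by parity so that $f$-value is not even defined; the slip is precisely the chessboard bookkeeping you flagged as the main obstacle. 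The $4p+1$ case is unaffected and both arguments agree there. One minor point: Proposition~\ref{Case3} is not actually needed for $\mathbb{P}(0,-1;4p)$; besides Proposition~\ref{Case1} you only use $\Cr(0,0;4p+1)=2^{2-4p}\binom{2p-1}{p}^2$, which comes from Lemma~\ref{Creaform} together with the entry $g_{0,0,1}=2$ of Table~\ref{tab:1}.
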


\begin{proof}
    By all the knowledge we have gathered, it is easy for us to compute
    \[f_{0,0,0}(p)=1.\]
    Thus we have
    \[g_{0,0,0}(p)=4\]
    and 
    \[h_{0,0,0}(p)=2.\]
    Inserting this into the formula in Corollary \ref{squarehole} with $\alpha=0$ one gets the first expression.
    On the other hand, we also already computed 
    \[f_{0,0,1}(p)=2\]
    and thus
    \[g_{0,0,1}(p)=h_{0,0,1}(p)=8.\]
    Therefore, Corollary \ref{squarehole} and $\alpha=1$ yield the second formula.
    
\end{proof}

\appendix

\section{Degree bounds for rational parts}
In this appendix we examine again our main objects, Kravchuk polynomials, creation rates and placement probabilities, to find bounds for the degrees of the polynomials in the rational parts. As we will see, it is possible to provide an explicit formula for the denominators of these objects. However, the numerators still remain mysterious even though their degrees are linearly bounded. We start off with an notation for members of a certain family of polynomials which will turn out to be central in the upcoming analysis.
\begin{nota}
    Throughout this section we denote by
    \[N(a,b,A,B;p):=\prod_{i=1}^a(i+p)^{\alpha_i}\prod_{j=1}^b(-2j+1+2p)^{\beta_j}\]
    the polynomial in $p$ where $\alpha_i=\begin{cases}
        2, &\text{if }\ i\leq A, \\ 1, &\text{otherwise,} 
    \end{cases}$ and $\beta_j=\begin{cases}
        2, &\text{if }\ j\leq B, \\ 1, &\text{otherwise.} 
    \end{cases}$
\end{nota}

\begin{rem}
    The degrees of the polynomials $N(a,b,A,B,p)$ are especially simple to determine. Namely, as long as $A\leq a$ and $B\leq b$, we instantly observe 
    \[\deg(N(a,b,A,B,p))=a+b+A+B.\]
\end{rem}

Moreover, we would like to introduce the following notation for the non-negative division by 2 without remainder.
\begin{nota}
    For an integer $z\in\Z$ set
    \[z\divi 2:= \max \bigg(\bigg\lfloor{\frac{z}{2}}\bigg\rfloor,0\bigg).\]
\end{nota}

Our main goal is to prove the following improvement of Theorem~\ref{1/4}.
\begin{thm}[\sc{Advanced 1/4-Phenomenon}] \label{adv1/4}
    Let $l\geq 0$ and $m\in \Z$ be integers and $\alpha\in\{0,1,2,3\}$ such that $l+m\equiv \alpha +1$~\em{mod}~$2$\em. If $m>0$ define 
    \[(l,m_1,\alpha_1),(l,m_2,\alpha_2),\dots,(l,m_k,\alpha_k)\in\{(l,i,\alpha-m+i\ \text{\em mod\em} \ 4)\mid i=1,2,\dots,m\}\]
    to be all those tuples $(l,m',\alpha')$ which fulfil either
    \begin{itemize}
        \item $\alpha'\neq 0$,  $m'\leq l+\alpha'-1$ and $l-m'-1\equiv \alpha$~\em{mod}~$4$ \em or
        \item $\alpha'= 0$,  $m'\leq l+3$ and $l-m'-1\equiv 0$~\em{mod}~$4$. 
    \end{itemize}
    Furthermore, denote by $S_1,S_2,\dots, S_k$ the integers
    \[S_i:= \bigg(\frac{l-m_i+\alpha_i+1}{2}-\alpha_i\bigg)\divi 2.\]
    Then we can write the according placement probability as
    \[\mathbb{P}(l,m;4p+\alpha)=\frac{1}{4}+2^{-4p-\alpha}\binom{2p-1}{p}^2\frac{h_{l,m}(p)}{N(r,s,R,S;p)\cdot\prod_{i=1}^k(-2S_i-1+2p)}\]
    where $N(r,s,R,S;p)=C(l,m,\alpha;p)$ is the explicit denominator of the rational part of $\Cr(l,m;4p+\alpha)$ as presented in Corollary~\ref{advcreation} below and $h_{l,m}(p)$ is a polynomial in $p$ with \[\deg(h_{l,m})\leq\max(l,m)+\bigg\lfloor\frac{\min(l+3,m)}{2}\bigg\rfloor+1.\]

    On the other hand, if $m<0$ let 
    \[(l,m_1,\alpha_1),(l,m_2,\alpha_2),\dots,(l,m_k,\alpha_k)\in\{(l,i,\alpha-m+i\ \text{\em mod\em}\ 4)\mid i=1,2,\dots,m\}\]
    be all those tuples $(l',m',\alpha')$ which fulfil either
    \begin{itemize}
        \item $\alpha'\neq 0$,  $|m'|< l'+\alpha'-1$ and $l+m'+\alpha\equiv 3$~\em{mod}~$4$ \em or
        \item $\alpha'= 0$,  $|m'|<l$ and $l+m'\equiv 0$~\em{mod}~$4$. 
    \end{itemize}
    Furthermore, denote by $R_1,R_2,\dots, R_k$ the integers
    \[R_i:= \frac{l-m_i+\alpha_i-1}{2}\divi 2.\]
    Then we can write the according placement probability as
    \[\mathbb{P}(l,m;4p+\alpha)=\frac{1}{4}+2^{-4p-\alpha}\binom{2p-1}{p}^2\frac{h_{l,m}(p)}{N(r,s,R,S;p)\cdot\prod_{i=1}^k(R_i+1+p)}\]
    where $N(r,s,R,S;p)=C(l,m,\alpha;p)$ is the explicit denominator of the rational part of $\Cr(l,m;4p+\alpha)$ as presented in Corollary~\ref{advcreation} below and $h_{l,m}(p)$ is a polynomial in $p$ with \[\deg(h_{l,m})\leq\max(l,|m|)+\bigg\lfloor\frac{\min(l+3,|m|)}{2}\bigg\rfloor+1.\]
    \end{thm}

This result naturally implies Theorem~\ref{shortadv} from the introduction. \newline

Just as our simpler version of the $1/4$-phenomenon also the proof for Theorem~\ref{adv1/4} works by first showing an advanced version of our Growth Theorem for Kravchuk polynomials, then of creation rates and finally deduce Theorem~\ref{adv1/4}. On the way we use again different recursive properties of our objects to prove our claims. However, the reasoning at each step now becomes much more delicate since it is not enough any more to just deduce the existence of some rational function satisfying our formulas, but we need to verify that these rational parts appear in certain shapes. Mathematically, this is done by analysing algebraic operations much more carefully and working ourselves through a lot of case distinctions. In the end, we will be able to show that the minimal denominator in the rational part of the formula of the $1/4$-phenomenon divides the denominator polynomial given in Theorem~\ref{adv1/4}. Nevertheless, the mere technical nature of the proofs below is the main reason to cover them in the appendix. We start off with the proof of an advanced version of Theorem~\ref{growing1}. 

\subsection{The Advanced Growth Theorem for Kravchuk polynomials}

It turns out that all the denominators of the occurring rational functions are of the shape $N(a,b,A,B,p)$. This already starts with the Kravchuk polynomials. Below, we present an advanced version of our Growth Theorem for Kravchuk polynomials. The proof turns out to be much more technical than the one for Theorem \ref{growing1} but on the other hand it also provides much more detail on the shape of the rational functions.

\begin{thm}[\sc{The Advanced Growth Theorem for Kravchuk polynomials}]\label{growing2}
    Let $a,b\in \Z$ and $\alpha\in \{0,1,2,3\}$. Then we have
    \[\Krav(a+2p,b+2p;4p+\alpha-1)=(-1)^p\binom{2p-1}{p}\frac{G_{a,b,\alpha}(p)\cdot q_{a,b,\alpha}(p)}{N(r(a),s(b),0,0;p)}\]
    where 
    \begin{itemize}
        \item $r(a):=\begin{cases}
             a\divi 2, &\textit{if }\ a>0,\\
              (|a|+\alpha-1)\divi 2, &\textit{if }\ a\leq 0,
        \end{cases}$
        \item $s(b):=\begin{cases}
             (b-\alpha+1)\divi 2, &\textit{if }\ b\geq0,\\
             |b|\divi 2, &\textit{if }\ b< 0,
        \end{cases}$
        \item $G_{a,b,\alpha}(p)$ is a polynomial in $p$ whose explicit formula can be read off Table \ref{advTable} and
        \item $q_{a,b,\alpha}(p)$ is a polynomial in $p$ with $\deg(q_{a,b,\alpha})\leq \min(|a|,|b|)+1$.
    \end{itemize}
\end{thm}

 \begin{table}[]
        \centering
        \begin{tabular}{|m{1.5cm}|m{9cm}|}
        \hline
            $a\geq 0$,\newline $b\geq0$ &$
                \displaystyle{\prod_{i=((b-\alpha)\divi 2)+1}^{ (a-\alpha)\divi 2} (-i+p)\cdot\prod_{j\in\{ a<2j-1\leq b\}}(2j-1+2p)}$\\
            \hline
             $a\leq 0$,\newline $b\geq0$ &$\displaystyle{\prod_{i= ((b-\alpha)\divi 2)+1}^{(a-1)\divi 2} (-i+p)\cdot \prod_{j\in\{ |a|-1+\alpha<2j-1\leq b\}}(2j-1+2p)}$\\
            \hline
             $a\geq 0$,\newline $b\leq0$ &$\displaystyle{\prod_{i=((b-1)\divi 2)+1}^{(a-\alpha)\divi 2} (-i+p)\cdot\prod_{j\in\{ a<2j-1\leq |b|-1+\alpha\}}(2j-1+2p)}$\\ 
            \hline
            $a\leq 0$,\newline $b\leq0$ &$\displaystyle{\prod_{i=((b-1)\divi 2)+1}^{ (a-1)\divi 2} (-i+p)\cdot\prod_{j\in\{ |a|-1+\alpha<2j-1\leq |b|-1+\alpha\}}(2j-1+2p)}$   \\ 
            \hline
        \end{tabular}
        \vskip10pt
        \caption{The polynomial $G_{a,b,\alpha}(p).$ Notice that for every instance either the left or the right product is empty.}
        \label{advTable}
    \end{table}

The proof of Theorem \ref{growing2} follows similar ideas as the one for the original Growth Theorem. However, we need to consider many different cases. First, we show that the structure conjectured in Theorem \ref{growing2} is preserved under the symmetry relation of Kravchuk polynomials.

\begin{lem}\label{sym2}
    If $\Krav(a+2p,b+2p;4p+\alpha-1)$ with $a,b\in \Z$ and $\alpha\in\{0,1,2,3\}$ is of the shape stated in Theorem \ref{growing2} then so is $\Krav(b+2p,a+2p;4p+\alpha-1)$. 
\end{lem}

\begin{proof}
    We will only go through the subcase $a>0, b<0$ with $|a|\geq |b|+\alpha-1$. All other cases follow the same line of computations. The starting point is always the application of Lemma~\ref{kravsum} which tells us
    \begin{align}
        \Krav(b+2p,a+2p;4p+\alpha-1)&= \notag \\ \frac{(a+2p)!(2p-a+\alpha-1)!}{(b+2p)!(2p-b+\alpha-1)!}&\Krav(a+2p,b+2p;4p+\alpha-1). \notag
    \end{align}
    After inserting our assumed structure for $\Krav(a+2p,b+2p;4p+\alpha-1)$ this becomes
    \begin{align}
        &\dfrac{(a+2p)!(2p-a+\alpha-1)!}{(b+2p)!(2p-b+\alpha-1)!}\cdot \notag \\
        &\hphantom{hhhhhhh}{}\textcolor{purple}{(-1)^p\binom{2p-1}{p}}\cdot \dfrac{q(p)\cdot \displaystyle{\prod_{i=((|b|-1)\divi 2)+1}^{(a-\alpha)\divi 2 }}(-i+p) }{\displaystyle{\prod_{i=1}^{a\divi 2}(i+p)\prod_{j=1}^{|b|\divi 2}(-2j+1+2p)}}.\label{ZR1}
    \end{align}
    If we now rewrite the factorials as products and ignore the non-rational growth factor $\textcolor{purple}{(-1)^p\binom{2p-1}{p}}$ we obtain for the rest
    \[\dfrac{q(p)\displaystyle{\prod_{i=b+1}^{a}(i+2p)}\prod_{i=((|b|-1)\divi 2)+1}^{(a-\alpha)\divi 2 }(-i+p)\textcolor{blue}{\prod_{j=(|b|\divi 2)+1}^{(a-\alpha+1)\divi 2}(-2j+1+2p)}}{\displaystyle{\prod_{i=b-\alpha+1}^{a-\alpha}(-i+2p)\underbrace{\textcolor{brown}{\prod_{i=1}^{|b|+\alpha-1}(i+p)}\textcolor{blue}{\prod_{j=1}^{(a-\alpha+1)\divi 2}(-2j+1+2p)}}_{N(r(b),s(a),0,0;p)}\textcolor{brown}{\prod_{i=(|b|+\alpha-1\divi 2)+1}^{a\divi 2}(i+p)}}}\]
    where the coloured products are obtained by splitting and extending the products in the denominator of \eqref{ZR1}, respectively.
    Then, by reordering the single product terms, we arrive at the expression
     \begin{align}
         &\frac{q(p)}{N(r(b),s(a),0,0;p)}\cdot\notag \\
         &{ \frac{\displaystyle{\prod_{i=b+1}^{a}(i+2p)\prod_{i=((|b|-1)\divi 2)+1}^{(a-\alpha)\divi 2 }(-i+p)\prod_{j=(|b|\divi 2)+1}^{(a-\alpha+1)\divi 2}(-2j+1+2p)}}{\displaystyle\prod_{i=b-\alpha+1}^{a-\alpha}(-i+2p)\prod_{i=(|b|+\alpha-1\divi 2)+1}^{a\divi 2}(i+p)}}\notag\\
         &=:\frac{q(p)}{N(r(b),s(a),0,0;p)}\cdot R(p)\notag
     \end{align}
     where we collected all the big products in a factor $R(p)$. It remains to show, that $R(p)$ is equal to $G_{b,a,\alpha}(p)$. Thus, we compute by splitting into positive values for the running variables
     \begin{align}
         &R(p)= \notag\\
         &=\frac{\displaystyle{\textcolor{purple}{\prod_{i=1}^{a}(i+2p)\prod_{i=0}^{|b|-1}(-i+2p)}\prod_{i=((|b|-1)\divi 2)+1}^{(a-\alpha)\divi 2 }(-i+p)\prod_{j=(|b|\divi 2)+1}^{(a-\alpha+1)\divi 2}(-2j+1+2p)}}{\displaystyle\textcolor{blue}{\prod_{i=1}^{a-\alpha}(-i+2p)\prod_{i=0}^{|b|+\alpha-1}(i+2p)}\prod_{i=(|b|+\alpha-1\divi 2)+1}^{a\divi 2}(i+p)}\notag
         \end{align}
    where the marked products correspond to the leftmost products in the numerator and denominator of $R(p)$ respectively. We continue our computation by doing cancellations and obtain
         \begin{align}
         &\frac{\displaystyle{{\prod_{i=|b|+\alpha}^{a}(i+2p)}\prod_{i=((|b|-1)\divi 2)+1}^{(a-\alpha)\divi 2 }(-i+p)\prod_{j=(|b|\divi 2)+1}^{(a-\alpha+1)\divi 2}(-2j+1+2p)}}{\displaystyle{\prod_{i=|b|}^{a-\alpha}(-i+2p)}\prod_{i=(|b|+\alpha-1\divi 2)+1}^{a\divi 2}(i+p)}\notag\\
          &=\frac{\displaystyle{{\prod_{i=|b|+\alpha}^{a}(i+2p)}\prod_{i=((|b|-1)\divi 2)+1}^{(a-\alpha)\divi 2 }(-\textcolor{purple}{2}i+\textcolor{purple}{2}p)\prod_{j=(|b|\divi 2)+1}^{(a-\alpha+1)\divi2}(-2j+1+2p)}}{\displaystyle{\prod_{i=|b|}^{a-\alpha}(-i+2p)}\prod_{i=(|b|+\alpha-1\divi 2)+1}^{a\divi 2}(\textcolor{purple}{2}i+\textcolor{purple}{2}p)}\notag\\
          &=\frac{\displaystyle{\prod_{i=|b|+\alpha}^{a}(i+2p)}}{\displaystyle\prod_{i=(|b|+\alpha-1\divi 2)+1}^{a\divi 2}(\textcolor{purple}{2}i+\textcolor{purple}{2}p)}\cdot\dfrac{\displaystyle{}\prod_{i=((|b|-1)\divi 2)+1}^{(a-\alpha)\divi 2 }(-\textcolor{purple}{2}i+\textcolor{purple}{2}p)\prod_{j=(|b|\divi 2)+1}^{(a-\alpha+1)\divi 2}(-2j+1+2p)}{\displaystyle\prod_{i=|b|}^{a-\alpha}(-i+2p)}\notag
     \end{align}
     which is equal to the product for $G_{b,a,\alpha}(p)$ suggested by Table~\ref{advTable} since in the first fraction all the even factors $(2i+2p)$ cancel within the numerator and the second factor vanishes since its numerator is just the decomposition of the denominator into its even- and odd-indexed factors. What remains is only $G_{b,a,\alpha}(p)$.
\end{proof}
\begin{rem}
    The non-explicit polynomial $q(p)=q_{a,b,\alpha}(p)$ is still dependent on the parameters $a,b\in \Z$ and $\alpha\in\{0,1,2,3\}$. In the representative case illustrating the proof of Lemma~\ref{sym2} we have $q_{a,b,\alpha}(p)=q_{b,a,\alpha}(p)$. This is not always the case. For some occasions we only have $q_{a,b,\alpha}(p)=c\cdot q_{b,a,\alpha}(p)$ for some constant $c\in \Q$. However, it is always true that
    \[\deg(q_{a,b,\alpha})=\deg(q_{b,a,\alpha}).\]
\end{rem}
Now to proceed in proving the Advanced Growth Theorem for Kravchuk polynomials, Theorem~\ref{growing2}, we need to show two results covering the special cases of $b=0$ or $1$. These will serve as initial values for the usual three term recurrence.

\begin{lem} \label{lemB0}
    Let $a\in \Z$. Then $\Krav(a+2p,2p,4p+\alpha-1)$ is of the shape stated in Theorem~\ref{growing2} and the formula of the polynomial $q_{a,0,\alpha}(p)$ is  given explicitly by Table \ref{tabB0}.
     
\end{lem}
    \begin{table}[]
        \centering
        \begin{tabular}{|m{1.5cm}|m{5.5cm}|m{5.5cm}|}
        \hline
           $q_{a,0,\alpha}(p)$& $a> 0$& $a<0$\\
        \hline
         $\vphantom{\displaystyle{}\prod}\alpha=0$ & $(-1)^{(a+1)\divi 2}$ & $(-1)^{|a|\divi 2}$\\
         \hline
         $\alpha=1$& $\begin{cases}
             (-1)^{a/2}\cdot 2p, &\text{ if }\ a \text{ even,}\\
             0, & \text{ if }\ a \text{ odd.}
         \end{cases}$ & $\begin{cases}
             (-1)^{a/2}\cdot 2p, &\text{ if }\ a \text{ even,}\\
             0, & \text{ if }\ a \text{ odd.} \end{cases}$\\
         \hline
         $\vphantom{\displaystyle{}\prod}\alpha=2$\newline $a\geq 2$  & $(-1)^{a\divi 2}\cdot 2p$ & $(-1)^{(|a|+1)\divi 2}\cdot 2p$\\
         \hline
         $\alpha=3$ & $\begin{cases}
             (-1)^{1+a/2}\cdot (-2+2a)p, & \text{ if }\ a\text{ even},\\
             (-1)^{(a-1)/ 2}\cdot 4p, &\text{ if }\ a\text{ odd.}
         \end{cases}$&$\begin{cases}
             (-1)^{|a|/ 2}\cdot (2+2|a|)p, & \text{ if }\ a\text{ even},\\
             (-1)^{(|a|+1)/2}\cdot 4p, &\text{ if }\ a\text{ odd.}
         \end{cases}$\\
         \hline
        \end{tabular}
        \vskip10pt
        \caption{The polynomial $q_{a,0,\alpha}(p).$ Missing instances may be found in Table \ref{tab:1}.}
        \label{tabB0}
    \end{table}

A similar statement is also true for the case when $b=1$.
\begin{lem} \label{lemB1}   
    Let $a\in \Z$. Then $\Krav(a+2p,1+2p,4p+\alpha-1)$ is of the shape as stated in Theorem~\ref{growing2} and the polynomial $q_{a,1,\alpha}(p)$ has an explicit formula provided by Table~\ref{tabB1}.
\end{lem}

\begin{table}[]
        \centering
        \begin{tabular}{|m{1.5cm}|m{5.5cm}|m{5.5cm}|}
        \hline
           $q_{a,1,\alpha}(p)$& $a> 0$& $a<0$\\
        \hline
         $\vphantom{\displaystyle{}\prod}\alpha=0$ & $(-1)^{(a\divi 2)+1}((-1)^a\cdot(2a+1)+2p)$ & $(-1)^{(|a|-1)\divi 2}\cdot((-1)^a(2a+1)+2p)$\\
         \hline
         $\alpha=1$& $(-1)^{(a\divi 2)+1}\cdot\begin{cases}
             a, &\text{ if }\ a \text{ even,}\\
             2, & \text{ if }\ a \text{ odd.}
         \end{cases}$ & $(-1)^{|a|\divi 2}\begin{cases}
             |a|, &\text{ if }\ a \text{ even,}\\
             2, & \text{ if }\ a \text{ odd.} \end{cases}$\\
         \hline
         $\vphantom{\displaystyle{}\prod}\alpha=2$\newline $a\geq 2$  & $(-1)^{(a+1)\divi 2}\cdot 2p$ & $(-1)^{|a|\divi 2}\cdot 2p$\\
         \hline
         $\alpha=3$ & $\begin{cases}
             (-1)^{a/2}\cdot (2+4p), & \text{ if }\ a\text{ even},\\
             0, &\text{ if }\ a\text{ odd.}
         \end{cases}$&$\begin{cases}
             (-1)^{|a|/ 2}\cdot (2p+4p^2), & \text{ if }\ a\text{ even},\\
             0, &\text{ if }\ a\text{ odd.}
         \end{cases}$\\
         \hline
        \end{tabular}
        \vskip10pt
        \caption{The polynomial $q_{a,1,\alpha}(p).$ Missing instances may be found in Table \ref{tab:1}.}
        \label{tabB1}
    \end{table}

\begin{proof}[Proof of Lemma~\ref{lemB0} and Lemma~\ref{lemB1}.] To deduce Lemma~\ref{lemB0} or \ref{lemB1} one simply recalls the explicit expressions of the Kravchuk polynomials for $a,b\in \{0,1\}$ and $\alpha\in\{0,1,2,3\}$ which we deduced to prove Theorem~\ref{growing1} and which are summarised in Table~\ref{tab:1}. These serve as initial values for the three term recurrence of Kravchuk polynomials as described in Proposition~\ref{Kravrec}. Then the 16 cases of Lemma~\ref{lemB0} and Lemma~\ref{lemB1} follow each via a simple induction argument. We illustrate this line of arguments exemplarily by proving the statement for the intriguing looking case where $a>0$, $b=0$ and $\alpha=3$.

The three term recurrence in this case yields
\begin{align}
    \Krav(a+1+2p,&2p,4p+2)=\notag\\
    \frac{2}{a+1+2p}&\Krav(a+2p,2p,4p+2)-\frac{2p-a+3}{a+1+2p}\Krav(a-1+2p,2p,4p+2).\notag
\end{align}
Inserting our induction hypothesis this becomes
\begin{align}
    (-1)^p\binom{2p-1}{p}\Bigg(\frac{2q_{a,0,3}(p)}{a+1+2p}&\frac{\displaystyle{\prod_{i=1}^{(a-3)\divi 2}(-i+p)}}{N(a\divi 2,0,0,0;p)}\notag \\ &-\frac{(2p-a+3)q_{a-1,0,3}(p)}{a+1+2p}\frac{\displaystyle{\prod_{i=1}^{(a-4)\divi 2}(-i+p)}}{N((a-1)\divi 2,0,0,0;p)}\Bigg). \label{eqB01}
\end{align}
We again just ignore the universal factor $(-1)^p\binom{2p-1}{p}$ and focus on the remaining expression. To proceed we need to do a case distinction on the parity of $a$.\\
\textsc{Case 1:} $a$ is even.

This means $(a-4)\divi 2=(a-3)\divi 2$ and $a\divi 2=(a+1)\divi 2$. Via inserting correction terms the expression in the big parenthesis transforms as follows.
\begin{align}
    \frac{2\textcolor{purple}{q_{a,0,3}(p)}}{a+1+2p}&\frac{(-\frac{a-2}{2}+p)\displaystyle{\prod_{i=1}^{(a-3)\divi 2}(-i+p)}}{(-\frac{a-2}{2}+p)\cdot N(a+1\divi 2,0,0,0;p)}\notag \\ &\hphantom{aaaa}{}-\frac{(2p-a+3)\textcolor{purple}{q_{a-1,0,3}(p)}}{a+1+2p}\frac{(\frac{a}{2}+p)(-\frac{a-2}{2}+p)\displaystyle{\prod_{i=1}^{(a-3)\divi 2}(-i+p)}}{(\frac{a}{2}+p)(-\frac{a-2}{2}+p)\cdot N((a-1)\divi 2,0,0,0;p)}\notag\\
    &=\textcolor{blue}{\frac{\displaystyle{\prod_{i=1}^{(a-2)\divi 2}(-i+p)}}{N((a+1)\divi 2,0,0,0;p)}}\Bigg(\frac{2\textcolor{purple}{q_{a,0,3}(p)}-(2p-a+3)(\frac{a}{2}+p)\textcolor{purple}{q_{a-1,0,3}(p)}}{(a+1+2p)(-\frac{a-2}{2}+p)}\Bigg).\notag
\end{align}
Now, the big factor marked in blue is the correct term for $a+1>0$, $b=0$ and $\alpha=3$. It remains to check whether the second factor reduces to $q_{a+1,0,3}(p)$. We insert the formulas for $q_{a,0,3}(p)$ and $q_{a-1,0,3}(p)$ given by induction and obtain for the expression in the parenthesis:
\begin{align}
    &\frac{2\textcolor{purple}{(-2+2a)(-1)^{\frac{a}{2}+1}p}-(2p-a+3)(\frac{a}{2}+p)\textcolor{purple}{(-1)^{\frac{a-2}{2}}\cdot4p}}{(a+1+2p)(-\frac{a-2}{2}+p)}\notag\\
    &\hskip2cm=\frac{(-2+2a)(-1)^{\frac{a}{2}+1}4p-(2p-a+3)(a+2p)(-1)^{\frac{a-2}{2}}\cdot4p}{(a+1+2p)(-a+1+2p)}\notag\\
    &\hskip2cm=\frac{(-2+2a)(-1)^{\frac{a}{2}+1}4p+(2p-a+3)(a+2p)(-1)^{\frac{a}{2}}\cdot4p}{(a+1+2p)(-a+1+2p)}\notag\\
    &\hskip2cm=(-1)^{\frac{a}{2}}4p\cdot\frac{(2-2a)+(2p-a+3)(a+2p)}{(a+1+2p)(-a+2+2p)}\notag\\
    &\hskip2cm=q_{a+1,0,3}(p)\cdot \frac{2 + a - a^2 + 6 p + 4 p^2}{2 + a - a^2 + 6 p + 4 p^2}\notag\\&\hskip2cm=q_{a+1,0,3}(p).\notag
\end{align}
\textsc{Case 2:} $a$ is odd.

This means $(a-3)\divi 2=(a-2)\divi 2$ and $(a-1)\divi 2=a\divi 2$. Hence, when going back to Term~\eqref{eqB01} the expression in the big parenthesis becomes
\begin{align}
    &\frac{\textcolor{purple}{q_{a,0,3}(p)}\displaystyle{\prod_{i=1}^{(a-2)\divi 2}}(-i+p)}{(\frac{a+1}{2}+p)N(a\divi 2,0,0,0;p)}-\dfrac{\textcolor{purple}{q_{a-1,0,3}(p)}(-\frac{a-3}{2}+p)\displaystyle{\prod_{i=1}^{(a-4)\divi 2}(-i+p)}}{(\frac{a+1}{2}+p)N(a\divi 2,0,0,0;p)}\notag\\
    &\hphantom{aaaaaaaa}{}=\frac{\textcolor{purple}{q_{a,0,3}(p)}\displaystyle{\prod_{i=1}^{(a-2)\divi 2}}(-i+p)-\textcolor{purple}{q_{a-1,0,3}(p)}\prod_{i=1}^{(a-2)\divi 2}(-i+p)}{N((a+1)\divi 2,0,0,0;p)}\notag\\
    &\hphantom{aaaaaaaa}=\frac{\displaystyle{\prod_{i=1}^{(a-2)\divi 2}(-i+p)}}{N((a+1)\divi 2,0,0,0;p)}\cdot (\textcolor{purple}{q_{a,0,3}(p)}-\textcolor{purple}{q_{a-1,0,3}(p)})\notag\\
     &\hphantom{aaaaaaaa}=\frac{\displaystyle{\prod_{i=1}^{(a-2)\divi 2}(-i+p)}}{N((a+1)\divi 2,0,0,0;p)}\cdot (\textcolor{purple}{(-1)^{\frac{a-1}{2}}4p}-\textcolor{purple}{(-2+2(a-1))(-1)^{\frac{a-1}{2}+1}p})\notag\\
     &\hphantom{aaaaaaaa}=\frac{\displaystyle{\prod_{i=1}^{(a-2)\divi 2}(-i+p)}}{N((a+1)\divi 2,0,0,0;p)}\cdot (-1)^{\frac{a-1}{2}}(4p+(-4+2a)p)\notag\\
     &\hphantom{aaaaaaaa}=\frac{\displaystyle{\prod_{i=1}^{(a-2)\divi 2}(-i+p)}}{N((a+1)\divi 2,0,0,0;p)}\cdot (-1)^{\frac{a+1}{2}+1}(-2+2(a+1))p\notag\\
     &\hphantom{aaaaaaaa}=\frac{\displaystyle{\prod_{i=1}^{(a-2)\divi 2}(-i+p)}}{N((a+1)\divi 2,0,0,0;p)}\cdot q_{a+1,0,3}(p).\notag
\end{align}
This completes the proof for the identity proposed by Lemma~\ref{lemB0} for the case $a>0$, $b=0$ and $\alpha=3$. We would like to stress again that all other cases are deduced analogously by a similar line of computations. Since they would not add any further insights to the problem at this point, we declare the proof of Lemma~\ref{lemB0} and Lemma~\ref{lemB1} as completed.
\end{proof}

Finally, we are able to prove the Advanced Growth Theorem for Kravchuk polynomials (Theorem~\ref{growing2}). Again, we will not write down all cases covered by Table~\ref{advTable}. However, it is easy to convince oneself that every other case follows analogously.

\begin{proof}[Proof of Theorem~\ref{growing2}]
    We start by observing that it is actually enough to prove the statement for the $|b|\geq |a|$ case, since by Lemma~\ref{sym2} the assertion is true for $(a,b)\in\Z^2$ with $|a|>|b|$ if and only if it is true for the ordered pair $(b,a)$. Moreover, by Lemma~\ref{lemB0} and Lemma~\ref{lemB1} the theorem holds for all $(a,0)$ and all $(a,1)$ with $a\in\Z$. As a consequence, it is enough to show that the three term recursion of Kravchuk polynomials preserves the shape demanded in the theorem. Since then, given $a,b\in \Z$ with $|a|\leq |b|$, we know that the theorem holds for $(b,0)$ and $(b,1)$, therefore also for $(0,b)$ and $(1,b)$ and thus via the three term recursion we conclude validity of the theorem at $(a,b)$ too.

    Before we begin, let us introduce a shorthand notation
    \[K(a,b,\alpha):= \frac{\Krav(a+2p,b+2p,4p+\alpha-1)}{(-1)^p\binom{2p-1}{p}}\]
    which let us ignore the universal growth factor whose existence was already studied in Theorem~\ref{growing1}. Notice that $K(a,b,\alpha)$ is a rational function in $p$ which satisfies the same linear recursions as $\Krav(a+2p,b+2p,4p+\alpha-1)$. In particular we have
    \[K(a+1,b,\alpha)=\frac{-2b+\alpha-1}{a+1+2p}K(a,b,\alpha)-\frac{2p-(a-\alpha)}{a+1+2p}K(a-1,b,\alpha)\]
    and
    \[K(a-1,b,\alpha)=\frac{-2b+\alpha-1}{2p-a+\alpha}K(a,b,\alpha)-\frac{a+1+2p}{2p-a+\alpha}K(a+1,b,\alpha).\]
    First we check the case $b>a>0$.

    Inserting the formulas of Table~\ref{advTable} into the first recursion yields
    \begin{align}
        K(a+1,b,\alpha)&=\notag\\
        &\frac{(-2b+\alpha-1)q_{a,b,\alpha}(p) }{(a+1+2p)N(a\divi 2,s(b),0,0;p)}\prod_{\{j|a<2j-1\leq b\}}(2j-1+2p)\notag\\
        &-\frac{(2p-(a-\alpha))q_{a-1,b,\alpha}(p)}{(a+1+2p)N((a-1)\divi 2,s(b),0,0;p)}\prod_{\{j|a-1<2j-1\leq b\}}(2j-1+2p).\label{cas1}
    \end{align}
    \textsc{Subcase:} $a$ is even.\\
    This implies for parity reasons that firstly $a\divi 2= (a+1)\divi 2$ and secondly we have\break$\{j\mid a<2j-1\leq b\}=\{j\mid a-1<2j-1\leq b\}$. Thus, Expression \eqref{cas1} above turns into
    \begin{align}
        &\frac{(-2b+\alpha-1)q_{a,b,\alpha}(p)}{N((a+1)\divi 2,s(b),0,0;p)}\cdot \frac{\displaystyle{\prod_{\{j\mid a-1<2j-1\leq b\}}(2j-1+2p)}}{(a+1+2p)}\notag\\
        &\hphantom{aaaaaaaa}{}-\frac{(2p-a+\alpha)(\frac{a}{2}+p)q_{a-1,b,\alpha}(p)}{N((a+1)\divi2,s(b),0,0;p) }\cdot\frac{\displaystyle{\prod_{\{j|a-1<2j-1\leq b\}}(2j-1+2p)}}{(a+1+2p)}\notag\\
        &\hphantom{}{}=\frac{{\displaystyle{\prod_{\{j|a+1<2j-1\leq b\}}(2j-1+2p)}}}{N((a+1)\divi2,s(b),0,0;p)}\cdot\notag\\ &\hphantom{aaaaaaaaa}{}\underbrace{\Big((-2b+\alpha-1)q_{a,b,\alpha}(p)-(2p-a+\alpha)(\frac{a}{2}+p)q_{a-1,b,\alpha}(p)\Big)}_{=: q_{a+1,b,\alpha}(p)}\notag
    \end{align}
    with 
    \begin{align}
        \deg(q_{a+1,b,\alpha})&\leq \max(\deg(q_{a,b,\alpha}),\deg(q_{a-1,b,\alpha})+2)\notag \\
        &\leq \max (\min(a,b)+1,\min(a-1,b)+3) \notag\\
        &=\max(a+1,a+2)=a+2=\min(a+1,b)+1.\notag
    \end{align}
    \textsc{Subcase:} $a$ is odd.\\
    This means $(a-1)\divi 2=a\divi 2$ and $\{j\mid a<2j-1\leq b\}\leq \{j\mid a+1<2j-1\leq b\}$. Hence, Expression \eqref{cas1} becomes
    \begin{align}
        &\frac{(-2b+\alpha-1)q_{a,b,\alpha}(p)}{2\big(\frac{a+1}{2}+p\big) N(a\divi 2,s(b),0,0;p)}\prod_{\{j\mid a+1<2j-1\leq b\}}(2j-1+2p)\notag\\
        &\hphantom{aaaaa}{}-\frac{(2p-a+\alpha)q_{a-1,b,\alpha}(p)}{2\big(\frac{a+1}{2}+p\big)N(a\divi 2,s(b),0,0;p)}(a+2p)\prod_{\{j\mid a+1<2j-1\leq b\}}(2j-1+2p)\notag \\
        &=\frac{\displaystyle\prod_{\{j\mid a+1< 2j-1\leq b\}}(2j-1+2p)}{N((a+1)\divi 2, s(b),0,0;p)}\cdot\notag\\
        &\hphantom{aaaaaaaaa}{}\cdot\underbrace{\Big(\frac{1}{2}(-2b+\alpha-1)q_{a,b,\alpha}(p)-\frac{1}{2}(2p-a+\alpha)(a+2p)q_{a-1,b,\alpha}(p)\Big)}_{=:q_{a+1,b,\alpha}(p)}\notag
    \end{align}
    where again $\deg(q_{a+1,b,\alpha})\leq \min(a+1,b)+1$.
    
    Next we check whether the three term relation preserves the demanded shape also for negative $a$. As an illustration for such a situation we analyse the case $0>a>b$. Inserting the proposed terms for $K(a,b,\alpha)$ and $K(a+1,b,\alpha)$ into the second recursion we obtain
    \begin{align}
        &K(a-1,b,\alpha)=\notag\\
        &\frac{(-2b+\alpha-1)q_{a,b,\alpha}(p)}{(-a+\alpha+2p)N((|a|-1+\alpha)\divi 2,s(b),0,0;p)}\prod_{\{j: |a|-1+\alpha<2j-1\leq |b|-1+\alpha\}}(2j-1+2p)\notag\\
        &-\frac{(a+1+2p)q_{a+1,b,\alpha}(p)}{(-a+\alpha+2p)N((|a|-2+\alpha)\divi 2,s(b),0,0;p)}\prod_{\{j: |a|-2+\alpha<2j-1\leq |b|-1+\alpha\}}(2j-1+2p).\notag
    \end{align}
    \textsc{Subcase:} $|a|+\alpha$ is even.\\
    Therefore, $(|a|-1+\alpha)\divi 2= (|a|-2+\alpha)\divi 2$ and \\
    $\{j:|a|+\alpha<2j-1\leq |b|-1+\alpha\}=\{j:|a|+\alpha-1<2j-1\leq |b|-1+\alpha\}$. Thus, the computation above continues to
    \begin{align}
        &\frac{(-2b+\alpha-1)q_{a,b,\alpha}(p)}{2\big(-\frac{a-\alpha}{2}+p\big)N((|a|-2+\alpha)\divi 2,s(b),0,0;p)}\prod_{\{j: |a|+\alpha<2j-1\leq |b|-1+\alpha\}}(2j-1+2p)\notag\\
        &-\frac{(a+1+2p)q_{a+1,b,\alpha}(p)(|a|-1+\alpha+2p)}{2\big(-\frac{a-\alpha}{2}+2p\big)N((|a|-2+\alpha)\divi 2,s(b),0,0;p)}\prod_{\{j: |a|+\alpha<2j-1\leq |b|-1+\alpha\}}(2j-1+2p)\notag\\
        &=\frac{(-2b+\alpha-1)q_{a,b,\alpha}(p)}{2\big(\frac{|a|+\alpha}{2}+p\big)N((|a|-2+\alpha)\divi 2,s(b),0,0;p)}\prod_{\{j: |a|+\alpha<2j-1\leq |b|-1+\alpha\}}(2j-1+2p)\notag\\
        &-\frac{(a+1+2p)q_{a+1,b,\alpha}(p)(|a|-1+\alpha+2p)}{2\big(\frac{|a|+\alpha}{2}+2p\big)N((|a|-2+\alpha)\divi 2,s(b),0,0;p)}\prod_{\{j: |a|+\alpha<2j-1\leq |b|-1+\alpha\}}(2j-1+2p)\notag\\
        &=\frac{\displaystyle{\prod_{\{j: |a|+\alpha<2j-1\leq |b|-1+\alpha\}}(2j-1+2p)}}{N((|a|+\alpha)\divi 2,s(b),0,0;p)}\notag\\
        &\hphantom{aaaaa}{}\cdot\underbrace{\frac{1}{2}\Big(((-2b+\alpha-1)q_{a,b,\alpha}(p)-(a+1+2p)(|a|-1+\alpha+2p)q_{a+1,b,\alpha}(p)\Big)}_{=:q_{a-1,b,\alpha}(p)},\notag
    \end{align}
    where $\deg(q_{a-1,b\alpha})\leq \min(|a-1|,|b|)+1$.\\
    \newline
    \textsc{Subcase:} $|a|+\alpha$ is odd.\\
    This means $(|a|-1+\alpha)\divi 2=(|a|+\alpha)\divi 2$ and\\
    $\{j:|a|-1+\alpha<2j-1\leq |b|-1+\alpha\}=\{j:|a|-2+\alpha<2j-1\leq |b|-1+\alpha\}$. Hence, in this case the recursion formula for $K(a-1,b,\alpha)$ becomes
\begin{align}
    &\frac{(-2b+\alpha-1)(|a|+\alpha+2p)q_{a,b,\alpha}(p)}{(-a+\alpha+2p)N((|a|+\alpha)\divi 2,s(b),0,0;p)}\prod_{\{j: |a|+\alpha<2j-1\leq |b|-1+\alpha\}}(2j-1+2p)\notag\\
        &-\frac{(a+1+2p)(|a|+\alpha+2p)(\frac{-a+1+\alpha}{2}+p)q_{a+1,b,\alpha}(p)}{(-a+\alpha+2p)N((|a|+\alpha)\divi 2,s(b),0,0;p)}\prod_{\{j: |a|+\alpha<2j-1\leq |b|-1+\alpha\}}(2j-1+2p)\notag\\
    &=\frac{\displaystyle{\prod_{\{j: |a|+\alpha<2j-1\leq |b|-1+\alpha\}}(2j-1+2p)}}{N((|a|+\alpha)\divi 2,s(b),0,0;p)}\cdot\notag\\
    &\hphantom{aaaaa}\cdot\underbrace{\Big((-2b+\alpha-1)q_{a,b,\alpha}(p)-(a+1+2p)(\frac{-a+1+\alpha}{2}+p)q_{a+1,b,\alpha}(p)\Big)}_{=:q_{a-1,b,\alpha}(p)},\notag
\end{align}
    with $\deg(q_{a-1,b,\alpha})\leq \min(|a|+1,|b|)$. (Notice, that in the computations above we used $|a|=-a$ several times.)

    All other cases are deduced analogously. This finishes the proof for the Advanced Growth Theorem for Kravchuk polynomials.
\end{proof}

\subsection{Advanced growth for creation rates and divisibility rules}

In a next step, we expand our analysis on degree bounds to both creation rates and placement probabilities. For creation rates, Lemma \ref{Creaform} tells us how to obtain the creation rates from the Kravchuk polynomials. Hence, as a corollary from the Advanced Growth Theorem \ref{growing2} we have the following immediate result.
\begin{cor}[\sc{Advanced growth of creation rates}] \label{advcreation}
    Let $n=4p+\alpha>0$. Suppose $l$ and $m$ are integers with $l+m\equiv n-1$~\em{mod}~$2$ \em and\/ $|l|+|m|\leq n-1$. Set $a:= (l+m+\alpha-1)/2$ and\/ $b:= (l-m+\alpha-1)/2$, then
    \begin{align}
        &\Cr(l,m;4p+\alpha)=\notag\\
&2^{-4p-\alpha+1}\binom{2p-1}{p}^2\frac{G_{a,b,\alpha}(p)\cdot G_{b,a,\alpha}(p)\cdot q_{a,b,\alpha}(p)\cdot q_{b,a,\alpha}(p)}{N(\max[r(a),r(b)],\max[s(a),s(b)],\min[r(a),r(b)],\min[s(a),s(b)];p)}.\notag
    \end{align}
    For all other integers $l$ and $m$ we have $\Cr(l,m;n+1)=0$.
\end{cor}

While we were able to determine a huge part of the numerator of the rational part in the Kravchuk polynomials and the creation rates, the numerator for the placement probabilities remains completely mysterious in the sense that none of its roots are given by an explicit formula for general positions $(l,m)\in \Z$. However, one might determine the numerator at a given location via interpolation methods. This only works if suitable degree bounds for the numerator and denominator polynomial are known. Christian Krattenthaler and Michael Drmota hinted to the author that it is actually enough to find a general bound for the denominator degree. Their observation is summarised in the lemma below.

\begin{lem} \label{degbound}
    Let $l,m\in \Z$ and $\alpha\in\{0,1,2,3\}$ be integers such that $l+m\equiv \alpha-1$~\em{mod}~$2$. Moreover let $Z(p)$ and $D(p)$ be polynomials in $p$ such that $Z(p)/D(p)$ is the rational part of the placement probability $\mathbb{P}(l,m;4p+\alpha)$ as it appears in formula given by Theorem~\ref{1/4}. Then $\deg(Z)\leq \deg(D).$
\end{lem}
    
    \begin{proof}
        First, we remember the asymptotic behaviour of the central binomial coefficient. Namely we have
        \[\binom{2n}{n}\sim \frac{4^n}{\sqrt{\pi n}}\]
        for $n\to \infty$. (See e.g. \cite[p. 35, Eq. (20)]{Assymp} for a reference.) Therefore, we have
        \begin{align}
            2^{-4p-\alpha}\binom{2p-1}{p}^2&=2^{-4p-\alpha}\bigg(\frac{(2p-1)!}{p!(p-1)!}\bigg)^2\notag \\
            &=2^{-4p-\alpha}\frac{p^2}{4p^2}\binom{2p}{p}^2\notag \\
            &\sim 2^{-4p-\alpha-2}\bigg(\dfrac{4^p}{\sqrt{\pi p}}\bigg)^2\notag \\
            &=\dfrac{2^{-\alpha-2}}{\pi p}=\frac{\text{constant}}{p}.\notag
        \end{align}
        Now Theorem \ref{1/4} tells us
        \[\mathbb{P}(l,m;4p+\alpha)=\frac{1}{4}+2^{-4p-\alpha}\binom{2p-1}{p}^2\frac{Z(p)}{D(p)}\]
        for some polynomials $Z(p)=Z_{l,m,\alpha}(p)$ and $D(p)=D_{m,l,\alpha}(p)$. Moreover, Corollary \ref{placeasymp} states
        \[\mathbb{P}(l,m;4p+\alpha)\overset{p\to\infty}{\longrightarrow} \frac{1}{4},\]
        which implies
        \[2^{-4p-\alpha}\binom{2p-1}{p}^2\frac{Z(p)}{D(p)}\longrightarrow 0.\]
        On the other hand, we have already found out that
        \begin{align}
            2^{-4p-\alpha}\binom{2p-1}{p}^2\frac{Z(p)}{D(p)}&\sim \frac{C}{p}\cdot \frac{Z(p)}{D(p)}\notag
        \end{align}
        for some constant $C\in \R$. Now, the expression on the right-hand side tends to $0$ if and only if $\deg(Z)\leq \deg(D)$. This proves the assertion.
    \end{proof}

    From this we can immediately deduce a linear degree bound for the numerator of the rational part in the formula of creation rates.

    \begin{cor}\label{creadegree}
         Let $n=4p+\alpha>0$. Suppose $l$ and $m$ are integers with $l+m\equiv n-1$~\em{mod}~$2$ \em and\/ $|l|+|m|\leq n-1$. Denote by $Z_{l,m,\alpha}(p)$ the numerator of the rational part in in the formula for $\Cr(l,m;4p+\alpha)$ presented in Theorem~\ref{advcreation}. Then we have
         \[\deg(Z_{l,m,\alpha})\leq \begin{cases}
             |l|,&\text{ if }\ |m|\leq |l|+\alpha-1,\\
             |m|,&\text{ if }\ |m|> |l|+\alpha-1.
         \end{cases}\]
    \end{cor}
    \begin{proof}
        The symmetry of the Aztec diamond with respect to the vertical axis translates to an invariance of the placement probabilities with respect to the sign of the parameter $\pm l$. By the definition of $\Cr(l,m;4p+\alpha)$ this extends to creation rates.
        As always set\break$a:= (l+m+\alpha-1)/2$ and\/ $b:= (l-m+\alpha-1)/2$. Now, by Lemma~\ref{Creaform} the creation rates are also symmetric in the parameter $m$. In conclusion, we have
        \[\Cr(l,m,4p+\alpha)=\Cr(-l,m,4p+\alpha)=\Cr(l,-m,4p+\alpha)=\Cr(-l,-m,4p+\alpha).\]
        Thus, we can assume $l,m\geq 0$. This implies $a\geq 0$ and
        \[b\geq 0 \Longleftrightarrow m\leq l+\alpha-1.\]
        Now, since the creation rates are defined as an subtraction between placement probabilities, we observe that the statement of Lemma~\ref{degbound} from before also extends to the rational part of creation rates. Using Theorem~\ref{advcreation} this means
        \begin{align}\deg(Z_{l,m,\alpha})&\leq\deg(N(\max[r(a),r(b)],\max[s(a),s(b)],\min[r(a),r(b)],\min[s(a),s(b)];p))\notag\\
        &=r(a)+r(b)+s(a)+s(b).\notag
        \end{align}
        \textsc{First Subcase:} $m\leq l+\alpha-1$.\\
        In this case we compute
        \begin{align}
            r(a)&+r(b)+s(a)+s(b)\notag\\
            &=(a\divi 2)+(b\divi 2)+(a-\alpha +1\divi 2)+(b-\alpha+1\divi 2)\notag\\
            &=\bigg(\frac{l+m+\alpha-1}{2}\divi 2\bigg)+\bigg(\frac{l-m+\alpha-1}{2}\divi 2\bigg)\notag\\
            &\hphantom{+++}{}+\bigg(\frac{l+m+\alpha-1}{2}-\alpha+1\divi 2\bigg)+\bigg(\frac{l-m+\alpha-1}{2}-\alpha+1\divi 2\bigg)    \notag\\
            &=\bigg(\frac{l+m+\alpha-1}{2}\divi 2\bigg)+\bigg(\frac{l+m-\alpha+1}{2}\divi 2\bigg)\notag\\
            &\hphantom{+++}{}+\bigg(\frac{l-m+\alpha-1}{2}\divi 2\bigg)+\bigg(\frac{l-m-\alpha+1}{2}\divi 2\bigg)    \notag\\
            &\leq (l+m\divi 2)+(l-m\divi 2) \label{Eq. bound}\\
            &\leq 2l\divi 2 = l, \notag
        \end{align}
        where from Expression \eqref{Eq. bound} onwards we used the inequality 
        \[x\divi 2 + y\divi 2\leq (x+y) \divi 2.\]
        \textsc{Second Subcase:} $m> l+\alpha-1$.\\
        Here, we obtain
        {\allowdisplaybreaks
        \begin{align}
             r(a)&+r(b)+s(a)+s(b)\notag\\
             &= \bigg(\frac{l+m+\alpha-1}{2}\divi 2\bigg)+\bigg(\bigg(\frac{|l-m+\alpha-1|}{2}+\alpha-1\bigg)\divi 2\bigg)   \notag\\
             &\hphantom{+++}{}+\bigg(\bigg(\frac{l+m+\alpha-1}{2}-\alpha+1\bigg)\divi 2\bigg)+\bigg(\bigg(\frac{|l-m+\alpha-1|}{2}-1+1\bigg)\divi 2\bigg)\notag\\
             &= \bigg(\frac{l+m+\alpha-1}{2}\divi 2\bigg)+\bigg(\bigg(\frac{-l+m-\alpha+1}{2}+\alpha-1\bigg)\divi 2\bigg)   \notag\\
             &\hphantom{+++}{}+\bigg(\bigg(\frac{l+m+\alpha-1}{2}-\alpha+1\bigg)\divi 2\bigg)+\bigg(\frac{-l+m-\alpha+1}{2}\divi 2\bigg)\notag \\
             &= \bigg(\frac{l+m+\alpha-1}{2}\divi 2\bigg)+\bigg(\frac{l+m-\alpha+1}{2}\divi 2\bigg)   \notag\\
             &\hphantom{+++}{}+\bigg(\frac{-l+m+\alpha-1}{2}\divi 2\bigg)+\bigg(\frac{-l+m-\alpha+1}{2}\divi 2\bigg)\notag\\
             &\leq (l+m\divi 2)+(-l+m\divi 2)\notag\\
             &\leq 2m\divi 2 = m.\notag
        \end{align}}%
        This concludes the proof.
        \end{proof}

    Before we are able to clarify degree bounds for the numerator of the rational part of the placement probabilities we need to further analyse the behaviour of the denominator of the rational part of $\Cr(l,m;4p+\alpha)$ when varying the parameters. We will establish four divisibility rules which will be essential for the proof of the degree bounds in the placement probabilities. But first, we introduce another shorthand notation.
    \begin{nota}
        We denote the denominator of the rational part of $\Cr(l,m;4p+\alpha)$, presented in Theorem~\ref{advcreation}, by $C(l,m,\alpha)$. I.e., we have
        \begin{align}
        C(l,m,\alpha):=N(\max[r(a),r(b)],\max[s(a),s(b)],\min[r(a),r(b)],\min[s(a),s(b)];p).\notag
        \end{align}
    \end{nota}
    \begin{rem}
        For two polynomials of the $N(\dots)$-shape we observe: $N(r,s,R,S;p)$ divides $N(r',s',R',S';p)$ if and only if $r\leq r'$, $s\leq s'$, $R\leq R'$ and $S\leq S'$.
    \end{rem}
    This will be the key-idea to prove the following divisibility-lemmas.
    \begin{lem}\label{div1+}
    Let $l,m\geq 0$ be integers and let $0\neq \alpha\in \{1,2,3\}$. Then we have \break$C(l,m-1,\alpha-1)$ divides $C(l,m,\alpha)$
    unless $m<l+\alpha+1$ and $l-m-1\equiv\alpha $~\em{mod}~$4$.
    \end{lem}

    \begin{proof}
        We use the idea presented in the remark above. 
        Writing $C(l,m,\alpha)$ as $N(r,s,R,S)$ and $C(l,m-1,\alpha-1)$ as $N(r',s',R',S')$ we just need to extract the single arguments and compare them.
        \begin{align}
            r&=\frac{l+m+\alpha-1}{2}\divi 2 \geq \frac{l+m+\alpha-3}{2}\divi 2 = r',\notag \\
            s&=\bigg(\frac{l+m+\alpha-1}{2}-\alpha+1\bigg)\divi 2=\frac{l+m-\alpha+1}{2}\divi 2\notag\\ &\hphantom{++}{}\geq \frac{l+m-\alpha-1}{2}\divi 2=\bigg(\frac{l+m+\alpha-3}{2}-\alpha+1\bigg)\divi 2=s', \notag\\
            R&=\begin{cases}
                \frac{l-m+\alpha-1}{2}\divi 2, &\text{ if }\ m<l+\alpha-1,\\
                \big(|\frac{l-m+\alpha-1}{2}|+\alpha-1\big)\divi 2, &\text{ if }\ m\geq l+\alpha-1, 
            \end{cases} \notag\\
            R'&=\begin{cases}
                \frac{l-m+\alpha-1}{2}\divi 2, &\text{ if }\ m<l+\alpha-1,\\
                \big(|\frac{l-m+\alpha-1}{2}|+\alpha-2\big)\divi 2, &\text{ if }\ m\geq l+\alpha-1, 
            \end{cases} \notag\\
            S&=\begin{cases}
                \big( \frac{l-m+\alpha-1}{2}-\alpha +1\big)\divi 2, &\text{ if }\ m<l+\alpha-1,\\
                \big(|\frac{l-m+\alpha-1}{2}|+1\big)\divi 2, &\text{ if }\ m\geq l+\alpha-1,
            \end{cases}\notag\\
            S'&=\begin{cases}
                 \big(\frac{l-m+\alpha-1}{2}-\alpha +2\big)\divi 2, &\text{ if }\ m<l+\alpha-1,\\
               \big( |\frac{l-m+\alpha-1}{2}|+1\big)\divi 2, &\text{ if }\ m\geq l+\alpha-1.
            \end{cases}\notag
        \end{align}
        Hence, we have $r\geq r'$, $s\geq s'$ and $R\geq R'$ in every case and $S\geq S'$ for $m\geq l+\alpha-1$. If on the other hand $m<l+\alpha-1$ then we have
        \begin{align}
            S<S' &\Longleftrightarrow \bigg(\frac{l-m-\alpha-1}{2}+1\bigg)\divi 2 <\bigg(\frac{l-m-\alpha-1}{2}+2\bigg)\divi 2\notag\\
            &\Longleftrightarrow \frac{l-m-\alpha-1}{2} \text{ is an even integer}\notag\\
            &\Longleftrightarrow l-m\equiv\alpha+1\mod 4.\notag
        \end{align}
    \end{proof}

    \begin{lem}\label{div2+}
        Let $l\geq 0, m\geq 1$ be integers. Then we have for the polynomial\break $\frac{(2p-1)^2}{p^2}C(l,m-1,3;p-1)$ (i.e., the denominator evaluated at $p-1$) that it divides $C(l,m,0;p)$ unless $m\leq l+3$ and $l-m-1\equiv 0$~\em{mod}~$4$.
    \end{lem}
    \begin{proof}
        The first step is to bring all the polynomials in the statement into $N(\dots)$-shape. If $m\leq l+3$ we have
        \begin{align}
            C(l,m-1,3;p)&=\prod_{i=1}^{\frac{l+m+1}{2}\divi 2}(i+p)^{\alpha_i}\prod_{j=1}^{\big(\frac{l+m+1}{2}-2\big)\divi 2}(-2j+1+2p)^{\beta_j},\notag
        \end{align}
        where $\alpha_i=\begin{cases}
            2,&i\leq\frac{l-m+3}{2}\divi 2,\\ 1,&\text{otherwise,}
        \end{cases}$ and  $\beta_j=\begin{cases}
            2,&j\leq\frac{l-m+3}{2}-2\divi 2,\\ 1,&\text{otherwise.}
        \end{cases}$\\Therefore, 
        \[C(l.m-1,3;p-1)=p^2\prod_{i=1}^{\frac{l+m+1}{2}\divi 2 -1} (i+p)^{\alpha_{i+1}}\prod_{j=2}^{\frac{l+m+1}{2}\divi 2}(-2j+1+2p)^{\beta_{j-1}}.\]
        Thus, if $C(l,m-1,3;p)=N(r',s',R',S';p)$ then we have
        \[\frac{(2p-1)^2}{p^2}C(l,m-1,3;p-1)=N(r'-1,s'+1,R'-1,S'+1;p).\] 
        Now, for $C(l,m,0;p)=N(r,s,R,S;p)$ we determine
        \begin{align}
            r&= \frac{l+m-1}{2}\divi 2=\bigg(\frac{l+m+1}{2}-1\bigg)\divi 2\geq \frac{l+m+1}{2}\divi 2-1=r'-1,\notag\\
            s&=\bigg(\frac{l+m-1}{2}+1\bigg)\divi 2=s'+1,\notag\\
            R&=\frac{l-m-1}{2}\divi 2=\bigg(\frac{l-m+3}{2}-2\bigg)\divi 2=\frac{l-m+3}{2}\divi 2-1=R'-1,\notag\\
            S&=\bigg(\frac{l-m-1}{2}+1\bigg)\divi 2=\frac{l-m+1}{2}\divi 2,\notag\\
            S'+1&=\bigg(\frac{l-m+3}{2}-2\bigg)\divi 2+1=\bigg(\frac{l-m-1}{2}+2\bigg)\divi 2=\frac{l-m+3}{2}\divi 2.\notag
        \end{align}
        Hence, $\frac{(2p-1)^2}{p^2}C(l,m-1,3;p-1)$ divides $C(l,m,0;p)$ unless $S'+1>S$. This happens if and only if
        \begin{align}
            \frac{l-m+3}{2}\divi 2&=\bigg(\frac{l-m+1}{2}+1\bigg)\divi 2>\frac{l-m+1}{2}\divi 2\notag
        \end{align}
        which is the case if and only if $\frac{l-m+1}{2}$ is an odd number, i.e., $l-m+1\equiv 2$~\em{mod}~$4$ \em or (equivalently) $l-m-1\equiv 0$~\em{mod}~$4$.\em

        If $m>l+3$ then the only things changing are the values for $R,R',S$ and $S'$. We observe
        \begin{align}
            R&=\bigg(\Bigg|\frac{l-m-1}{2}\Bigg|-1\bigg)\divi 2=\frac{m-l-1}{2}\divi 2,\notag\\
            R'-1&=\bigg(\Bigg|\frac{l-m+3}{2}\Bigg|+2\bigg)\divi 2-1=\frac{m-l-3}{2}\divi 2,\notag\\
            S&=\Bigg|\frac{l-m-1}{2}\Bigg|\divi 2=\frac{m-l+1}{2}\divi 2,\notag\\
            S'+1&=\Bigg|\frac{l-m+3}{2}\Bigg|\divi 2+1=\bigg(\frac{m-l-3}{2}+2\bigg)\divi 2=\frac{m-l+1}{2}\divi 2.\notag
        \end{align}
        In particular, $R'-1\leq R$ and $S'+1=S$, meaning in this case divisibility is achieved.
    \end{proof}
    Lemma~\ref{div1+} and Lemma~\ref{div2+} explain divisibility relations for the numerators of the rational parts of the creation rates when $m\geq 0$. The following two results are their counterparts for negative values of $m$. 
    \begin{lem}\label{div1-}
    Let $l\geq 0$ and $m\leq -1$, and $3\neq \alpha\in\{0,1,2\}$. Then $C(l,m+1,\alpha+1)$ divides $C(l,m,\alpha)$ unless $|m|< l+\alpha-1$ and $l+m+\alpha\equiv 3$~\em{mod}~$4$.
    \end{lem}
    \begin{proof}
        Like before, we simple compare the arguments of these polynomials in their $N(\dots)$-shape. We write $C(l,m,\alpha)=N(r,s,R,S;p)$ and $C(l,m+1,\alpha+1)=N(r',s',R',S';p)$ and obtain
        \begin{align}
            r&=\frac{l-m+\alpha-1}{2}\divi 2=r',\notag\\
            s&=\bigg(\frac{l-m+\alpha-1}{2}-\alpha+1\bigg)\divi 2=\frac{l-m-\alpha+1}{2}\divi 2\notag\\
            &\hphantom{+++}{}\geq \frac{l-m-\alpha-1}{2}\divi 2=\bigg(\frac{l-m+\alpha-1}{2}-\alpha\bigg)\divi 2=s',\notag\\
             R&=\begin{cases}
                \frac{l+m+\alpha-1}{2}\divi 2, & \text{ if }\ |m|<l+\alpha-1,\\
                \big(\big|\frac{l+m+\alpha-1}{2}\big|+\alpha-1\big)\divi 2, &\text{ if }\ |m|\geq l+\alpha-1,
            \end{cases}\notag\\
            S&=\begin{cases}
                \big(\frac{l+m+\alpha-1}{2}-\alpha+1\big)\divi 2, & \text{ if }\ |m|\leq l+\alpha-1,\\
                \big|\frac{l+m+\alpha-1}{2}\big|\divi 2, &\text{ if }\ |m|> l+\alpha-1,
            \end{cases}\notag\\
            R'&=\begin{cases}
                \frac{l+m+\alpha+1}{2}\divi 2, & \text{ if }\ |m|<l+\alpha-1,\\
                \big(\big|\frac{l+m+\alpha+1}{2}\big|+\alpha\big)\divi 2, &\text{ if }\ |m|\geq l+\alpha-1,
            \end{cases}\notag\\
            S'&=\begin{cases}
                \big(\frac{l+m+\alpha+1}{2}-\alpha\big)\divi 2, & \text{ if }\ |m|\leq l+\alpha-1,\\
                \big|\frac{l+m+\alpha+1}{2}\big|\divi 2, &\text{ if }\ |m|> l+\alpha-1.
            \end{cases}\notag
        \end{align}
        Hence, if $|m|>l+\alpha-1$ then (since $m<0$)
        \begin{align}
            R&=\bigg(\bigg|\frac{l+m+\alpha-1}{2}\bigg|+\alpha-1\bigg)\divi 2= \bigg(\bigg|\frac{l+m+\alpha-1+2}{2}\bigg|+\alpha\bigg)\divi 2=R',\notag \\
            S&= \bigg|\frac{l+m+\alpha-1}{2}\bigg|\divi 2= \bigg(\bigg|\frac{l+m+\alpha+1}{2}\bigg|+1\bigg)\divi 2\geq S'.\notag
        \end{align}
        On the other hand, if $|m|< l+\alpha -1$ then still $S=S'$ but 
        \[R=\frac{l+m+\alpha-1}{2}\divi 2=\bigg( \frac{l+m+\alpha+1}{2}-1\bigg)\divi 2< \frac{l+m+\alpha+1}{2}\divi 2=R'\]
        if $l+m+\alpha+1\equiv 0$~\em{mod}~$4$.\em Otherwise we have equality. This proves the lemma.
    \end{proof}
    Finally, we have one last divisibility rule.
    \begin{lem}\label{div2-}
        Let $l\geq 0$ and $m\leq -1$ be integer parameter. Then the polynomial\break$\frac{(p+1)^2}{(2p+1)^2}C(l,m+1,0;p+1)$ divides the polynomial $C(l,m,3;p)$ unless $|m|< l$ and \\$l+m\equiv0$~\em{mod}~$4$.
    \end{lem}
    \begin{proof}
        Again we first set $C(l,m+1,0;p)=N(r',s',R',S';p)$ with
        \begin{align}
            r'&=\frac{l-m-1+0-1}{2}\divi 2=\bigg(\frac{l-m}{2}-1\bigg)\divi 2,\notag\\
            s'&=\bigg(\frac{l-m-2}{2}+1\bigg)\divi 2=\frac{l-m}{2}\divi 2,\notag\\
            R'&=\begin{cases}
                \frac{l+m}{2}\divi 2,&\text{ if }\ |m|< l,\\
                \big(\frac{|l+m|}{2}-1\big)\divi 2,&\text{ if }\ |m|\geq l,
            \end{cases}\notag\\
            S'&=\begin{cases}
                \big(\frac{l+m}{2}+1\big)\divi 2, &\text{ if }\ |m|\leq l,\\ \big|\frac{l+m}{2}\big| \divi 2, &\text{ if }\ |m|>l.
            \end{cases}\notag
        \end{align}
        Thus, 
        \[C(l,m+1,0;p)=\prod_{i=1}^{r'}(i+p)^{\alpha_i}\prod_{j=1}^{s'}(-2j+1+2p)^{\beta_j}\]
        with $\alpha_i=2$ if $i\leq R'$ and $\alpha_i=1$ otherwise and $\beta_j=2$ as long as $j\leq S'$ and $\beta_j=1$ otherwise. Then
        \begin{align}
            C(l,m+1,0;p+1)&=\prod_{i=1}^{r'}(i+1+p)^{\alpha_i}\prod_{j=1}^{s'}(-2j+1+2+2p)^{\beta_j}\notag\\
            &=\prod_{i=2}^{r'+1}(i+p)^{\alpha_{i-1}}\prod_{j=1}^{s'-1}(-2j+1+2p)^{\beta_{j+1}}\cdot (1+2p)^2.\notag
        \end{align}
        Therefore, we have
        \[\frac{(p+1)^2}{(2p+1)^2}C(l,m+1,0;p+1)=N(r'+1,s'-1,R'+1,S'-1;p).\]
        We need to compare this polynomial with $C(l,m,3;p)=N(r,s,R,S;p)$ where 
        \begin{align}
            r&=\frac{l-m+3-1}{2}\divi 2 =\bigg(\frac{l-m}{2}+1\bigg)\divi 2\geq r'+1,\notag \\
            s&=\bigg(\frac{l-m+2}{2}-2\bigg)\divi 2=\bigg(\frac{l-m}{2}-1\bigg)\divi 2\geq s'-1,\notag\\
            R&=\begin{cases}
                \big(\frac{l+m}{2}+1\big)\divi 2, &\text{ if }\ |m|<l+2,\\
                \big(\frac{|l+m+2|}{2}+2\big)\divi 2,&\text{ if }\ |m|\geq l+2,
            \end{cases}\notag \\
            S&=\begin{cases}
                \big(\frac{l+m+2}{2}-2\big)\divi 2, &\text{ if }\ |m|\leq l+2.\\
                \frac{|l+m+2|}{2}\divi 2, &\text{ if }\ |m|>l+2.
                \end{cases}\notag
        \end{align}
        Now, observe that in any case we have $S\geq S'-1$. Also, we have $R\geq R'$ if $|m|\geq l+2$. However, if $|m|<l$ we have 
        \[R=\bigg(\frac{l+m}{2}+1\bigg)\divi 2 < \bigg(\frac{l+m}{2}+2\bigg)\divi 2=\frac{l+m}{2}\divi 2+1=R'+1\]
        if and only if $l+m\equiv 0$~\em{mod}~$4$. \em
        It remains to check the two boundary cases. If $|m|=l$ (i.e., $m=-l$) then
        \[R=1\divi 2=0=R'+1\]
        and the case $|m|=l+1$ is prohibited since $l+m\equiv 0$~\em{mod}~$2$.\em This finishes the proof.
    \end{proof}

\subsection{Degree bounds for placement probabilities}

    We are finally able to talk about degree bounds for the rational part in the formula for placement probabilities. In this section we will conclude the proof of Theorem~\ref{adv1/4}. To advance we introduce the following notation.

    \begin{nota}
        Let $l,m\in\Z$ and $\alpha\in\{0,1,2,3\}$ such that $l+m\equiv \alpha+1$~\em{mod}~$2$.\em Using Theorem~\ref{1/4} we write
        \[\mathbb{P}(l,m;4p+\alpha)=\frac{1}{4}+2^{-4p-\alpha}\binom{2p-1}{p}^2\frac{Z(l,m,\alpha;p)}{D(l,m,\alpha;p)}\]
        where $Z(l,m,\alpha;p)$ and $D(l,m,\alpha;p)$ denote the minimal polynomials in $p$ (with respect to their degree) which satisfy the equation. 
    \end{nota}

    The main idea to prove Theorem~\ref{adv1/4} is to show that $D(l,m,\alpha;p)$ divides $C(l,m,\alpha;p)$ more or less. \textit{"More or less"} means that for some problematic choices of $l$ and $m$ we might need some additional factors. Nevertheless, those additional factors will be given explicitly. Since we know an explicit expression for $C(l,m,\alpha;p)$ this will give rise to a formula for $\mathbb{P}(l,m;4p+\alpha)$ with an explicit denominator in the rational part. Moreover, knowing the denominator and applying Lemma~\ref{degbound} we furthermore establish degree bounds for the unknown numerator polynomial in the formula. 
    The following proposition tells us how to obtain denominators $D(l,m,\alpha;p)$ and rather nice degree bounds for a fixed $l$-coordinate via checking a finite amount of initial instances. 

    \begin{prop}\label{PVA}
        Let $l\geq 0$ and $|m|>l+\alpha-1$. If $D(l,l+\alpha-1,\alpha;p)$ divides\break$C(l,l+\alpha-1,\alpha;p)$ for all valid values of $\alpha$ then we have that $D(l,m,\alpha;p)$ divides $C(l,m,\alpha;p)$ for all $m>l+\alpha-1$ and all valid values of $\alpha$. Also in negative direction: if\break$D(l,-|l+\alpha-1|,\alpha;p)$ divides $C(l,-|l+\alpha-1|,\alpha;p)$ then also $D(l,m,\alpha;p)$ divides $C(l,m,\alpha;p)$ for all $m<-|l+\alpha-1|$ and all valid values of $\alpha$. 
    \end{prop}

    \begin{rem}
        Notice, that whenever $D(l,m,\alpha;p)$ divides $C(l,m,\alpha;p)$ one has
        \[\deg(Z(l,m,\alpha;p))\leq \deg(D(l,m,\alpha;p))\leq\begin{cases}
            l, &\text{ if }\ |m|\leq l+\alpha-1,\\
            |m|, &\text{ if }\ |m|>l+\alpha-1.
        \end{cases}\]
        Here, for the first inequality we applied Lemma~\ref{degbound} and for the second one we used the proof of Corollary~\ref{creadegree}.
    \end{rem}

    \begin{proof}
        First, assume $\alpha\neq 0$. Then 
        \begin{align}
            \mathbb{P}(l,m;4p+\alpha)&=\mathbb{P}(l,m-1;4p+\alpha-1)+\frac{1}{2}\Cr(l,m;4p+\alpha)\notag\\
            &=\frac{1}{4}+2^{-4p-\alpha}\binom{2p-1}{p}^2\Bigg(\frac{2Z(l,m-1,\alpha-1;p)}{D(l,m-1,\alpha-1;p)}+\frac{c_{l,m,\alpha}(p)}{C(l,m,\alpha;p)}\Bigg),\notag
        \end{align}
        where $c_{l,m,\alpha}(p)$ just for now denotes the polynomial for the numerator of the rational part of the creation rates. Now, by induction we assume that $D(l,m-1,\alpha-1;p)$ divides $C(l,m-1,\alpha-1;p)$. Moreover, Lemma~\ref{div1+} tells us that $C(l,m-1,\alpha-1;p)$ divides $C(l,m,\alpha;p)$. Thus, by transitivity $D(l,m-1,\alpha-1;p)$ divides $C(l,m,\alpha;p)$ and the placement probability can be expressed as
        \[\mathbb{P}(l,m;4p+\alpha)=\frac{1}{4}+2^{-4p-\alpha}\binom{2p-1}{p}^2\frac{h_{l,m,\alpha}(p)}{C(l,m,\alpha;p)}\]
        for some polynomial $h_{l,m,\alpha}(p)$. Thus, just by definition of $D(l,m,\alpha;p)$ as the minimal denominator in such an expression, we must already have $D(l,m,\alpha;p)$ dividing $C(l,m,\alpha;p)$.

        Now, if $\alpha=0$ the initial recursion becomes
         \begin{align}
            \mathbb{P}(l,m;4p)&=\mathbb{P}(l,m-1;4(p-1)+3)+\frac{1}{2}\Cr(l,m;4p)\notag\\
            =\frac{1}{4}+2&^{-4(p-1)-3}\binom{2(p-1)-1}{(p-1)}^2\frac{Z(l,m-1,3;p-1)}{D(l,m-1,3;p-1)}+2^{-4p}\binom{2p-1}{p}^2\frac{c_{l,m,0}(p)}{C(l,m,0;p)},\notag\\
            =\frac{1}{4}+2&^{-4p}\binom{2p-1}{p}^2\Bigg(\frac{\frac{1}{2}Z(l,m-1,3;p-1)}{\frac{(2p-1)^2}{p^2}D(l,m-1,3;p-1)}+\frac{c_{l,m,0}(p)}{C(l,m,0;p)}\Bigg).\notag
        \end{align}
        By induction $D(l,m-1,3;p-1)$ divides $C(l,m-1,3;p-1)$. Furthermore, by Lemma~\ref{div2+} we have that $\frac{(2p-1)^2}{p^2}C(l,m-1,3;p-1)$ divides $C(l,m,0;p)$. Hence, the placement probability can be written as
        \[\mathbb{P}(l,m;4p)=\frac{1}{4}+2^{-4p}\binom{2p-1}{p}^2\frac{h_{l,m,0}(p)}{C(l,m,0;p)}\]
        which implies that $D(l,m,0;p)$ actually divides $C(l,m,0;\alpha)$.

        The statement for negative $m$ is proven analogously. However, for the final arguments one needs to use Lemma~\ref{div1-} and Lemma~\ref{div2-} instead of Lemma~\ref{div1+} and Lemma~\ref{div2+}, respectively.
    \end{proof}

    We will use the proposition above to prove degree bounds for positions $(l,m)\in \Z$ along the vertical and horizontal axis. 

    \begin{cor}\label{divVA}
        The polynomial $D(0,m,\alpha;p)$ divides $C(0,m,\alpha;p)$ and $D(1,m,\alpha;p)$ divides the polynomial $C(1,m,\alpha;p)$ for all $m\in \Z$ and all valid values of $\alpha\in\{0,1,2,3\}$.
    \end{cor}
    \begin{proof}
        By Proposition~\ref{PVA} it is enough to compute $D(0,m,\alpha;p)$, $D(1,m,\alpha;p)$,\break$C(0,m,\alpha;p)$ and $C(1,m,\alpha;p)$ for all $m$ with $-\alpha-1\leq m\leq \alpha$ and check whether the according polynomials divide each other. Once one has implemented the recursive steps for constructing $\mathbb{P}(l,m;4p+\alpha)$ that led to the proof of Theorem~\ref{1/4}, this is verified automatically by an computer. 
    \end{proof}

    In a next step, we also check such divisibility properties along positions on the horizontal axis.

    \begin{cor}\label{advm=0}
        The polynomial $D(l,0,\alpha;p)$ divides the polynomial $C(l,0,\alpha;p)$ for all $l\in\Z$ and all valid values of $\alpha\in\{0,1,2,3\}$.
    \end{cor}

    \begin{proof}
        Recall the ideas of the proof of Proposition~\ref{horireduce}: applying counter probability we have
        \begin{align}
            \mathbb{P}(l,0;4p+\alpha)&=1-\mathbb{P}(l-1,-1;4p+\alpha)-\mathbb{P}(1,l-1;4p+\alpha)-\mathbb{P}(0,-l;4p+\alpha).\notag
        \end{align}
        Notice, that
        \[\mathbb{P}(l-1,-1;4p+\alpha)=\mathbb{P}(l-1,0;4p+\alpha+1)-\Cr(l-1,0;4p+\alpha+1)/2.\]
        Hence, we have
        \begin{align}
            \mathbb{P}(l,0;4p+\alpha)&=1-\mathbb{P}(l-1,0;4p+\alpha+1)+\Cr(l-1,0;4p+\alpha+1)/2\notag\\
            &\hphantom{++++}{}-\mathbb{P}(1,l-1;4p+\alpha)-\mathbb{P}(0,-l;4p+\alpha).\notag
        \end{align}
        Therefore, if $\alpha\neq 3$ the rational part of $\mathbb{P}(l,0;4p+\alpha)$ is given by
        \[\frac{-Z(l-1,0,\alpha+1;p)}{D(l-1,0,\alpha+1;p)}+\frac{c_{l-1,0,\alpha+1}(p)}{C(l-1,0,\alpha+1;p)}-\frac{Z(1,l-1,\alpha;p)}{D(1,l-1,\alpha;p)}-\frac{Z(0,-l,\alpha;p)}{D(0,-l,\alpha;p)}\]
        where $c_{l-1,0,\alpha+1}(p)$ denotes again the numerator polynomial of the rational part of the creation rate. By induction we have $D(l-1,0,\alpha+1;p)$ divides $C(l-1,0,\alpha+1;p)$. Moreover, Corollary~\ref{divVA} yields that $D(1,l-1,\alpha;p)$ divides $C(1,l-1,\alpha;p)$ and $D(0,-l,\alpha;p)$ divides $C(0,-l,\alpha;p)$. Thus we are able to work with the explicitly known denominators of of the rational parts of creation rates and the proof for the $(\alpha\neq 3)$-case is finished once we have shown that all of the polynomials $C(l-1,0,\alpha+1;p)$, $C(1,l-1,\alpha;p)$ and $C(0,-l,\alpha;p)$ divide $C(l,0,\alpha;p)$.

        As always for such an analysis we write $C(l,0,\alpha;p)=N(r,s,R,S;p)$ and compute for the arguments:
        \begin{align}
            r&=R=\frac{l+\alpha-1}{2}\divi 2,\notag\\
            s&=S=\bigg(\frac{l+\alpha-1}{2}-\alpha+1\bigg)\divi 2=\frac{l-\alpha+1}{2}\divi 2.\notag
        \end{align}
        (Notice that here we are allowed to assume $l\geq 2$ since we already proved the statement for values $l=0$ and $l=1$ in the previous corollary.) 

        Now for $C(l-1,0,\alpha+1;p)=N(r_1,s_1,R_1,S_1;p)$ we have
        \begin{align}
            r_1&=R_1=\frac{l-1+\alpha+1-1}{2}\divi 2=r=R,\notag\\
            s_1&=S_1=\bigg(\frac{l-1+\alpha}{2}-\alpha\bigg)\divi 2=\frac{l-\alpha-1}{2}\divi 2\leq s=S.\notag
        \end{align}

        For $C(1,l-1,\alpha;p)=N(r_2,s_2,R_2,S_2;p)$ we deduce
        \begin{align}
            R_2\leq r_2&=\frac{l-1+\alpha}{2}\divi 2=r=R,\notag\\
            S_2\leq s_2&=\bigg(\frac{l-1+\alpha}{2}-\alpha+1\bigg)\divi 2\leq\frac{l-\alpha+1}{2}\divi 2=s=S\notag.
        \end{align}
        
        Finally, for $C(0,-l,\alpha;p)=C(0,l,\alpha;p)=N(r_3,s_3,R_3,S_3;p)$ we compute
        \begin{align}
            R_3\leq r_3&=\frac{l+\alpha-1}{2}\divi 2=r=R,\notag\\
            S_3\leq s_3&=\bigg(\frac{l+\alpha-1}{2}-\alpha+1\bigg)\divi 2= s=S.\notag
        \end{align}
        Hence, in this situation dividibility is indeed given.

        However, if $\alpha=3$ then we have
        \begin{align}
            \mathbb{P}(l,0;4p+3)&=1-\mathbb{P}(l-1,0;4(p+1))+\Cr(l-1,0;4(p+1))/2\notag\\
            &\hphantom{++++}{}-\mathbb{P}(1,l-1;4p+3)-\mathbb{P}(0,-l;4p+3)\notag
        \end{align}
        and thus its rational part is given by
        \begin{align}
        \frac{(2p+1)^2}{4(p+1)^2}\bigg(\frac{-Z(l-1,0,0;p+1)}{D(l-1,0,0;p+1)}&+\frac{c_{l-1,0,0}(p+1)}{C(l-1,0,0;p+1)}\bigg)\notag\\
        &-\frac{Z(1,l-1,\alpha;p)}{D(1,l-1,\alpha;p)}-\frac{Z(0,-l,\alpha;p)}{D(0,-l,\alpha;p)}.\notag
        \end{align}
        Again by induction $D(l-1,0,0;p+1)$ divides $C(l-1,0,0;p+1)$. Also, for the third\break and fourth summand not much changes. Therefore, it only remains to check whether\break $\frac{(p+1)^2}{(2p+1)^2}C(l-1,0,0;p+1)$ is a polynomial which divides $C(l,0,3;p)$.

        Writing again $C(l,0,3;p)=N(r,s,R,S;p)$ we now have
        \begin{align}
            r=R&=\frac{l+2}{2}\divi 2,\notag\\
            s=S&=\frac{l-2}{2}\divi 2\notag
        \end{align}
        just by inserting $\alpha =3$ into the formulas from before.

        On the other hand, if we write $C(l-1,0,0,p)=N(r',s',R',S';p)$ we have
        \begin{align}
            R'= r'&=\frac{l-2}{2},\notag\\
            S'= s'&=\frac{l}{2}.\notag
        \end{align}
        Thus, as we already have computed before, we have
        \[\frac{(p+1)^2}{(2p+1)^2}C(l-1,0,0;p+1)=N(r'+1,s'-1,R'+1,S'-1)\]
        which then indeed divides $C(l,0,3;p)$. This finishes the proof.
    \end{proof}

    To summarise, we now know that $D(l,m,\alpha;p)$ divides $C(l,m,\alpha;p)$ for positions $(l,m)\in \Z^2$ along the coordinate axes. In fact, we conjecture that $D(l,m,\alpha;p)$ divides $C(l,m,\alpha;p)$ for all positions $(l,m)\in\Z$ and all valid values of $\alpha$. Experiments strongly support this. However, the problematic cases in Lemmas~\ref{div1+} to \ref{div2-} make it hard to deduce this result. There seem to happen hidden cancellations which are very difficult to track. Nevertheless, one thing we can do is to count those problematic cases and analyse their contribution to a possibly larger denominator of the rational part of $\mathbb{P}(l,m;4p+\alpha)$. This observation finally leads to the proof of Theorem~\ref{adv1/4}, the advanced version of the $1/4$-phenomenon.

\begin{proof}[Proof of Theorem \ref{adv1/4}]
    Corollary~\ref{advm=0} tells us, that 
    \[\mathbb{P}(l,0;4p+\alpha)=\frac{1}{4}+2^{-4p-\alpha}\binom{2p-1}{p}^2\frac{h_{l,0}(p)}{C(l,0,\alpha;p)} \]
    for some polynomial $h_{l,0}$ with $\deg(h_{l,0}(p))\leq \deg(C(l,0,\alpha;p))\leq \max(l,0)=l$. From there on, we would like to increase the value for the $m$-parameter and analyse how the denominator $D(l,m,\alpha;p)$ might change. Lemma~\ref{div1+} and Lemma~\ref{div2+} tell us that\break$C(l,m'-1,\alpha'-1;p)$ divides $C(l,m',\alpha';p)$ and $\frac{(2p-1)^2}{p^2}C(l,m'-1,3;p-1)$ divides\break$C(l,m',0;p)$ unless $(l,m',\alpha')$ equals one of the tuples $(l,m_1,\alpha_1),\dots ,(l,m_k,\alpha_k)$ listed in the theorem. If one of those problematic cases occurs then we read from the proofs of those lemmas that we need to carry along additional factors. In particular, we see that in these cases the previous denominator divides $C(l,m',\alpha';p)\cdot (-2S_i-1+2p)$ in both situations whether $\alpha =0$ or $\alpha\neq 0$. (In the proofs of the lemmas this appeared as the inequality $S'>S$ when $\alpha\neq 0$ or $S'+1>S$ in the $\alpha'=0$ case. However, the difference was marginal, meaning that then $S'=S+1$ or $S'+1=S+1$, respectively. Hence, to obtain divisibility we need to add the $S+1$-factor given by the $S_i$.) 
    Therefore if we argue inductively on the parameter $m$, we have again by the definition of the creation rates
    \begin{align}
        \mathbb{P}&(l,m;4p+\alpha)=\mathbb{P}(l,m-1;4p+\alpha-1)+\frac{1}{2}\Cr(l,m;4p+\alpha)\notag\\
        &=\frac{1}{4}+2^{-4p-\alpha}\binom{2p-1}{p}^2\bigg(\frac{h(p)}{C(l,m-1,\alpha-1;p)\prod_{i=1}^{k'}(-2S_i-1+2p)}+\frac{g(p)}{C(l,m,\alpha;p)}\bigg) \notag
    \end{align}
    for some polynomials $h(p)$ and $g(p)$ and with $k'$ being either equal to $k$ or $k'=k-1$. (Note also that this is shape of formulas is obtained for the $(\alpha\neq0)$-case. However the other one works completely analogously.) Now Lemma~\ref{div1+} (or Lemma~\ref{div2+}) tells us that either $C(l,m-1,\alpha-1;p)$ divides $C(l,m,\alpha;p)$ (and hence $k'=k$) or it divides \\$C(l,m,\alpha;p)\cdot(-2S_k-1+2p)$ (whereas $k'=k-1$). Extending both fractions to the same denominator and adding them yields the formula of the shape proposed in the theorem.

    The formula for negative values of $m$ is shown analogously using Lemma~\ref{div1-} and Lemma~\ref{div2-}. In this situation the problematic tuples $(l,m_1,\alpha_1),\dots,(l,m_k,\alpha_k)$ lead to additional factors corresponding to the $R$-parameter in the polynomials $N(r,s,R,S;p)$. Hence, we would need to carry them along. 

    In conclusion, it only remains to check the degree bound for the numerator polynomial $h_{l,m}(p)$. By Lemma~\ref{degbound} we have
    \begin{align}
        \deg(h_{l,m})&\leq\begin{cases}
             \deg\bigg(N(r,s,R,S;p)\cdot\prod_{i=1}^k(-2S_i-1+2p)\bigg), &\text{ if }\ m>0,\\
              \deg\bigg(N(r,s,R,S;p)\cdot\prod_{i=1}^k(R_i+1+p)\bigg),&\text{ if }\ m<0
        \end{cases} \notag\\
        &=\deg(N(r,s,R,S;p))+k.\notag
    \end{align}
    Now, Corollary~\ref{creadegree} tells us $\deg(N(r,s,R,S;p))\leq \max(l,|m|)$. And to estimate $k$ we observe that, if 
    \[l'-m'-1\equiv\alpha' \mod 4\]
    then 
    \[l'-(m'+1)-1\equiv l-m'-2\equiv \alpha'-1\not\equiv\alpha+1\mod 4,\]
    meaning that if $(l,m',\alpha')$ is a problematic tuple, the next one is not. Since by the first condition we look at at most $l+3$ such tuples, we have that $k\leq \big\lfloor\frac{\min(m,l+3)}{2}\big\rfloor+1$. The same reasoning holds, when altering $m$ to $m-1$ for $m<0$. Thus, this proves the degree bound. 
\end{proof}

This finalises our analysis of the placement probabilities in the Aztec diamond. We suppose that the $1/4$-phenomenon suggests the existence of a much more general symmetry law of placement probabilities of dimers in regular lattices. This however will be the content of future research.
\nocite{Kuo1}

\newpage

\bibliographystyle{plain}
\bibliography{bib}

\end{document}